\numberwithin{equation}{section}
\newcommand{\R}{\mathbb R}
\newcommand{\C}{\mathbb C}
\newcommand{\N}{\mathbb N}
\newcommand{\Z}{\mathbb Z}
\newcommand{\be}{\begin{equation}}
\newcommand{\ee}{\end{equation}}
\newcommand{\ba}{\begin{eqnarray}}
\newcommand{\ea}{\end{eqnarray}}
\renewcommand{\S}{\mathcal S}
\newcommand{\Ms}{\mathcal M}
\newcommand{\Js}{\mathcal J}
\newcommand{\Cs}{\mathcal C}
\newcommand{\norm}[1]{\left\Vert#1\right\Vert}
\newcommand{\D}{\displaystyle}
\newcommand{\n}{\nu}
\newcommand{\e}{e}
\newcommand{\Bi}[2]{\int\limits_{\partial\Omega}#1#2}
\newcommand{\Iomega}[2]{\int\limits_{\Omega}#1#2}
\newcommand{\beq}{\begin{equation}}
\newcommand{\eeq}{\end{equation}}
\newtheorem{theorem}{Theorem}[section]
\newtheorem{proposition}[theorem]{Proposition}
\newtheorem{remark}[theorem]{Remark}
\newtheorem{lemma}[theorem]{Lemma}
\newtheorem{corollary}[theorem]{Corollary}
\newtheorem{definition}[theorem]{Definition}
\begin{document}

\title{Control of underwater vehicles in inviscid fluids.\\
 I: Irrotational flows.}

\author{Rodrigo Lecaros$^{1,2}$}

\address{$^1$Centro de Modelamiento Matem\'atico (CMM) and Departamento de Ingenier\'ia Matem\'atica,
Universidad de Chile (UMI CNRS 2807), Avenida Blanco Encalada 2120, Casilla 170-3, Correo 3, Santiago, Chile}
\email{rlecaros@dim.uchile.cl}

\smallskip\

\address{$^2$Basque Center for Applied Mathematics - BCAM, Mazarredo 14, E-48009, Bilbao, Basque Country, Spain}
\email{rlecaros@bcamath.org}

\author{Lionel Rosier$^3$}
\address{$^3$Institut Elie Cartan, UMR 7502 UdL/CNRS/INRIA,
B.P. 70239, 54506 Vand\oe uvre-l\`es-Nancy Cedex, France}
\email{Lionel.Rosier@univ-lorraine.fr}

\keywords{Underactuated underwater vehicle, submarine, controllability, Euler equations, return method, quaternion} 

\subjclass{35Q35, 76B03, 76B99}

\begin{abstract} 
In this paper, we investigate the controllability of an underwater vehicle 
immersed in an infinite volume of an inviscid fluid whose flow is assumed to be irrotational.
Taking as control input the flow of the fluid through a part of the boundary of the rigid body, we obtain
a finite-dimensional system similar to Kirchhoff laws in which the control input appears through both linear terms (with time derivative)
and bilinear terms. Applying Coron's return method, we establish some local controllability results for the position 
and velocities of the underwater vehicle. Examples with six, four, or only three controls inputs are given for a vehicule with an ellipsoidal shape.
\end{abstract}

\maketitle
\section{Introduction}

%\documentclass[10pt]{article}
%\usepackage [all]{xy}
%\documentclass[12pt,twoside,leqno]{article}
%\usepackage{geometry}
%\geometry{verbose,letterpaper,tmargin=3.5cm,bmargin=2.5cm,lmargin=2.75cm,rmargin=2.75cm}
%\usepackage{amssymb, latexsym}
%\usepackage{amsthm}
%\usepackage{amsmath,amscd}
%\usepackage{amsfonts}
%\usepackage[notcite,notref]{showkeys}
%\usepackage{color}
%\usepackage{appendix}
%\numberwithin{equation}{section} 
%\newtheorem{definition}{Definition}[section]
%\newtheorem{lemma}{Lemma}[section]
%\newtheorem{theorem}[lemma]{Theorem}
%\newtheorem{remark}[lemma]{Remark}
%\newtheorem{proposition}[lemma]{Proposition}
%\newtheorem{example}{Example}[section]

The control of boats or submarines has attracted the attention of the mathematical
community from a long time (see e.g. 
\cite{ACO,BKMS,Chambrion-Sigalotti,fossen-book,Fossen,Lamb,Leonard97,Leonard-Marsden,NS}.)
In most of the papers devoted to that issue, the fluid is assumed 
to be inviscid, incompressible and 
irrotational, and the rigid body (the vehicle) is supposed to have an elliptic shape.   
On the other hand, to simplify the model, the control is often assumed to
appear in a linear way in a finite-dimensional system describing the 
dynamics of the rigid body, the so-called {\em Kirchhoff laws}. 

A large vessel (e.g. a cargo ship) presents often one  tunnel 
thruster built into the bow to make docking easier.
Some accurate model of a boat {\em without rudder} controlled by 
two propellers, the one displayed in a transversal bowthruster at the bow 
of the ship, the other one placed at the stern of the boat,
was derived and investigated in \cite{GR}. A local controllability result for the position and velocity (six coordinates) of a boat surrounded by an inviscid 
(not necessarily irrotational) fluid  was derived in \cite{GR} with only two controls inputs.

The aim of this paper is to provide some accurate model of a neutrally buoyant underwater vehicle immersed in an infinite volume of ideal fluid, without rudder, and
 actuated by a few number of  propellers located into some tunnels inside the rigid body, and to give a rigorous analysis 
of the control properties of such a system. 
We aim to control both the position, the attitude, and the (linear and angular) velocities
of the vehicle by taking as control input the flow of the fluid through a part of the boundary of the rigid body. The inviscid incompressible fluid
is assumed here to have an irrotational  (hence potential) flow, for the sake of simplicity. The case of a fluid with vorticity will be considered elsewhere.

Our fluid-structure interaction problem can be described as follow. The underwater vehicle, represented by a rigid body occupying a connected 
compact set $\S (t)\subset \R ^3$, is surrounded by an homogeneous incompressible perfect fluid filling the open set $\Omega (t) :=\R ^3\setminus \S (t)$ 
(as e.g. for a submarine immersed in an ocean).
We assume that $\Omega (t)$ is $C^\infty$ smooth and connected.
 Let  $\S =\S (0)$ and  
 $$\Omega =\Omega(0)= \R^3\setminus \S (0)$$ 
 denote  the initial configuration ($t=0$). Then, the dynamics of the fluid-structure system are governed by the following system of PDE's 

%We consider a boat, represented by a rigid body occupying a smooth bounded connected and closed domain $\S (t) \subset \R^3$ and which is surrounded by a homogeneous incompressible perfect fluid. We denote by $\Omega(t) =\R^3\setminus \S (t)$ the domain occupied by the fluid, and write merely $\S  = \S (0)$ and $\Omega = \Omega(0)$ for the domains occupied respectively by the rigid body and the fluid at $t = 0$. The equations for the dynamics of the system fluid $+$ rigid body read then
\begin{eqnarray}\label{euler1}
\D\frac{\partial u}{\partial t}+(u\cdot\nabla)u+\nabla p=0,&& t\in(0,T), \; x\in\Omega(t),\qquad \\ 
\textrm{div } u =0,&& t\in(0,T), \; x\in\Omega(t),\qquad \\ 
u\cdot \n=(h'+\omega  \times  (x-h))\cdot\n+w(t,x),&& t\in(0,T),\; x\in\partial\Omega(t),\qquad \\ 
\lim\limits_{|x|\to+\infty}u(t,x)=0,&& t\in (0,T),  \\ 
m_0h''=\D\int\limits_{\partial\Omega(t)} p\n \,d\sigma,&&t\in(0,T), \\ 
\D\frac{d}{dt}(QJ_0Q^\ast \omega)=\D\int\limits_{\partial\Omega(t)} (x-h)\times p\n \,d\sigma ,&&t\in(0,T),\\ 
\label{eq for Q}
Q'= S(\omega)Q ,&&t\in(0,T),\\ 
u(0,x)=u_0(x),&& x\in \Omega , \\
 \label{original_system_f}
 (h(0),Q(0),h'(0),\omega (0))=(h_0,Q_0,h_1,\omega _0)&\in&\R^3\times \text{SO}(3)\times \R^3\times \R ^3.\label{euler6}
 \end{eqnarray}
In the above equations, $u$ (resp. $p$) is the velocity field (resp. the pressure) of the fluid, $h$ denotes the position of the center of mass of the solid, 
%$\theta$ is the angle vector of some frame linked to the solid with respect to some fixed frame, 
$\omega$ denotes the angular velocity and $Q$ the 3 dimensional rotation matrix giving the orientation of the solid. 
The positive constant $m_0$ and the matrix $J_0$, which denote respectively the mass and the inertia matrix of the rigid body, are defined as
$$m_0= \int\limits_{\S} \rho(x)dx,\;\;\;J_0=\D\int\limits_{\S}\rho(x)(|x|^2 Id- xx^\ast)dx, $$
where $\rho(\cdot)$ represents the density of the rigid body. 
Finally, $\n$  is the outward unit vector to $\partial\Omega(t)$,
$x\times y$ is  the cross product between the vectors $x$ and $y$,  and $S(y)$ is the skew-adjoint matrix such that
$S(y)x=y\times x$, i.e.
$$
S(y)=
\left( \begin{array} {ccc}
0  & -y_3 & y_2 \\
y_3 & 0 & -y_1 \\
-y_2 & y_1 & 0
\end{array}
\right) .
$$
The neutral buoyancy condition reads
\be
\label{buoyancy}
 \int\limits_{\S} \rho(x)dx = \int\limits_{\S} 1dx.
\ee

$f'$ (or $\dot f$) stands for  the derivative of $f$ respect to  $t$, 
$A^\ast$ means the transpose of the matrix $A$, and $Id$ denotes the identity matrix. Finally, the term $w(t, x)$, which 
stands for the flow through the boundary of the rigid body, is taken as control input. Its support will be strictly included in $\partial\Omega(t)$, 
and actually only a finite dimensional control input will be considered here (see below (\ref{form_control}) for the precise form of 
the control term $w(t, x)$).

When no control is applied (i.e. $w(t,x)=0$), then the existence
and uniqueness of strong solutions to \eqref{euler1}-\eqref{euler6} was
obtained first in \cite{ORT1} for a ball embedded in $\R^2$, and next in \cite{ORT2} for a rigid body $\S$ of
arbitrary form (still in $\R ^2$).  The case of a ball in $\R ^3$ was investigated in  \cite{RR2008}, 
and the case of a rigid body of arbitrary form in $\R ^3$ was studied in \cite{WZ}.  
%Finally, the result in \cite{
%We also refer to  \cite{GST} for the situation when $\Omega (t)=\Omega _0\setminus \overline{\S (t)}$,
%with $\Omega _0$ a bounded open set in $\R ^3$, and for the issue of the 
%analyticity in time.
The detection of the rigid body $\S (t)$ from partial measurements of the fluid
velocity has been tackled in \cite{CCOR} when $\Omega (t)
=\Omega _0\setminus \overline{\S (t)}$ ($\Omega_0\subset \R ^2$ being a bounded cavity)
and in \cite{CMM} when  $\Omega (t)=\R ^2\setminus \overline{\S (t})$. 

Here, we are interested in the control properties of 
\eqref{euler1}-\eqref{euler6}. The controllability of Euler equations 
has been established in 2D (resp. in 3D)  
in \cite{Coron1} (resp. in \cite{Glass}). 
Note, however, that there is no hope here to
control the motion of both the fluid and the rigid body. Indeed, $\Omega (t)$ is an
exterior domain, and the vorticity is transported by the flow with a 
finite speed propagation, so that it is not affected 
(at any given time) far from the rigid body. 
Therefore, we will deal with the control of the motion of the rigid body only. 
As the state of the rigid body is described by a vector in $\R ^{12}$,
it is natural to consider a finite-dimensional control input.

Note also that since the fluid is flowing through a part of the boundary of the rigid body, additional  boundary conditions 
are needed to ensure the uniqueness of the solution of \eqref{euler1}-\eqref{euler6} (see \cite{Yudovich64}, 
\cite{Kazhikhov}). In dimension three, one can specify the tangent components of the 
vorticity $\zeta (t, x) := \textrm{curl }v(t, x)$ on the inflow section; 
that is, one can set
\beq
\label{inflow2}
\zeta (t,x)\cdot \tau _i = \zeta_0(t,x)\cdot \tau _i \;\;\textrm{for }w(t,x) < 0,\ i=1,2,
\eeq
where $\zeta_0(t, x)$ is a given function and $\tau_i$, $i=1,2$,  are linearly independent vectors tangent to 
$\partial \Omega (t)$. As we are concerned here with irrotational flows, we choose $\zeta  _0\equiv 0$.

In order to write the equations of the fluid in a {\em fixed frame}, we perform a change of coordinates. We set 
\begin{eqnarray}
x&=&Q(t)y+h(t),\\ 
v(t,y)&=&Q^\ast(t) u(t,Q(t)y+h(t)),\\ 
q(t,y)&=&p(t,Q(t)y+h(t)),\\
\label{def l}
l(t)&=&Q^\ast(t)h'(t), \\
\label{def r} 
r(t)&=&Q^\ast (t) \omega (t).
\end{eqnarray}
Then  $x$ (resp. $y$) represents the vector of coordinates of a point in a fixed frame (respectively in 
a frame linked to the rigid body). We may without loss of generality assume that 
\[
h(0)=0, \qquad Q(0)=Id.
\]
Note that, at any given time $t$, $y$ ranges over the fixed domain 
$\Omega$ when $x$ ranges over $\Omega(t)$. Finally, we assume that the control takes the form

\beq\label{form_control}
w(t, x) =  w(t, Q(t)y + h(t)) =\sum_{j=1}^m w_j (t)\chi_j(y),
\eeq
where $m \in \N^\ast$ stands for the number of independent inputs, and $w_j(t) \in \R$ is the control 
input associated with the function $\chi_j \in C^\infty(\partial\Omega)$. To ensure the conservation of the mass of the fluid, we impose the relation

\beq
\Bi{\chi_j(y)d\sigma}=0\; \textrm{ for }1\leq j\leq m.
\eeq

Then the functions $(v, q, l, r)$ satisfy the following system

 \begin{eqnarray}\label{fixed domain system_i}
\D\frac{\partial v}{\partial t} +((v-l-r\times y)\cdot\nabla)v+r\times v+ \nabla q=0,&& t\in(0,T),\; y\in\Omega ,\\ 
\label{div null} \textrm{div }v=0,&& t\in(0,T),\; y\in\Omega ,\\ 
\label{boundary condition}
\D v\cdot \n=(l+r\times y)\cdot\n+\sum\limits_{1\leq j\leq m}w_j(t)\chi_j(y),&& t\in(0,T),\; y\in\partial\Omega ,\\ 
\label{limit condition}
\lim\limits_{|y|\to+\infty}v(t,y)=0,&& t\in (0,T), \\
\label{eq. of l}
m_0\dot l=\D\int\limits_{\partial\Omega} q\n \,d\sigma -m_0r\times l,&&t\in(0,T),\\ 
\label{eq. of r}
J_0 \dot r=\D\int\limits_{\partial\Omega} q(y\times \n) \,d\sigma-r\times J_0r,&&t\in(0,T),\\
(l(0),r(0))=(h_1,\omega _0),\;v(0,y)=u_0(y).&&
\end{eqnarray}
The paper is organized as follows. In Section 2, we simplify system \eqref{euler1}-\eqref{euler6} by assuming that the fluid is potential. 
We obtain a finite dimensional system (namely \eqref{system pq}) similar to Kirchhoff laws, in which the control input $w$ appears through both linear terms (with time derivative) 
and bilinear terms. The investigation of the control properties  of  \eqref{system pq} is performed in Section 3. After noticing that the controllability of the linearized
system at the origin requires six control inputs, we apply the {\em return method}
due to Jean-Michel Coron  to take advantage of the nonlinear terms  in \eqref{system pq}. (We refer the reader to \cite{coron-book} 
for an exposition of that method for finite-dimensional systems and for PDE's.) We consider the
linearization along a certain closed-loop trajectory and obtain a local controllability
result (Theorem \ref{thm1})
assuming that two rank conditions are fulfilled, by using 
a variant of Silverman-Meadows test for the controllability of a time-varying
linear system. Some examples using symmetry properties of the rigid body are given in Section 4.
 
\section{Equations of the motion in the potential case}\label{potential flows}
In this section we derive the equations describing the motion of the rigid body subject to flow boundary control when the fluid is potential. 
\subsection{Null vorticity}
Let us denote by 
\[
\zeta (t,y)=\textrm{curl}\, v (t,y)  := (\nabla \times v)(t,y)
\]
the {\em vorticity} of the fluid.
Here, we assume that
\beq\label{curl null}
\zeta _0=\textrm{curl}\, v_0 = 0\;\;\; \textrm{ in }\Omega
\eeq
and that the {\em three} components of $\zeta$ are null at the inflow part of $\partial\Omega$, namely
\beq\label{null inflow}
\displaystyle \zeta (t,y) = 0, \;\;\;	\textrm{if }\;	y\in \cup _{1\le j\le m} \text{ Supp } \chi _j
\ \text{ and } \ \sum_{j=1}^m w_j(t)\chi_j(y) \le 0.
\eeq
\begin{proposition} 
\label{prop1}
Under the assumptions (\ref{curl null}) and (\ref{null inflow}), one has
\beq\label{rot cero}
\zeta = \textrm{\rm curl}\, v\equiv 0\;\;\textrm{ in } [0,T]\times\Omega,
\eeq
\end{proposition}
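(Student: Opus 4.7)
The plan is to derive a closed linear transport equation for $\zeta$ and then conclude by the method of characteristics that it must vanish. Taking the curl of (\ref{fixed domain system_i}) and using $\text{curl}\,\nabla q=0$, $\text{div}\, v=0$ and $\text{div}(l+r\times y)=0$, a direct computation should give the vorticity equation in the body frame
\begin{equation}
\partial_t \zeta + \bigl((v - l - r\times y)\cdot\nabla\bigr)\zeta = (\zeta\cdot\nabla) v - r\times \zeta, \qquad t\in(0,T),\ y\in\Omega.
\end{equation}
The only non-trivial step is to handle the contribution of the translation-rotation field $c:=l+r\times y$ together with the Coriolis-type term $r\times v$: after expanding the curl of $(c\cdot\nabla)v$, one obtains a non-homogeneous piece $\nabla(r\cdot v)$ which precisely cancels against $\text{curl}(r\times v)=-(r\cdot\nabla)v$ thanks to the identity $\nabla(r\cdot v)-(r\cdot\nabla)v = r\times \text{curl}\, v=r\times\zeta$.

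Once the above equation is in hand, I would apply the method of characteristics along the \emph{relative} velocity field $V:=v-l-r\times y$. Let $Y(\cdot;t_0,y_0)$ denote the flow of $V$, which is well defined on $[0,T]$ since $V$ is smooth and bounded. Along any characteristic, $\zeta$ satisfies the linear ODE
\begin{equation}
\frac{d}{dt}\zeta(t,Y(t)) = \bigl(\nabla v(t,Y(t))\bigr)\,\zeta(t,Y(t)) - r(t)\times \zeta(t,Y(t)),
\end{equation}
so $\zeta$ vanishes identically along any characteristic as soon as it vanishes at one of its time values. The boundary condition (\ref{boundary condition}) gives $V\cdot \n = \sum_{j} w_j(t)\chi_j$, so the \emph{inflow} portion of $\partial\Omega$ (where $V\cdot\n\le 0$) is exactly the region where (\ref{null inflow}) applies and forces $\zeta$ to vanish.

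Now fix $(t^\ast,y^\ast)\in(0,T)\times\Omega$ and trace $Y(\cdot;t^\ast,y^\ast)$ backward in time. Since $V$ is bounded, the trajectory cannot escape to infinity in finite time, so two cases arise: either the trajectory reaches $t=0$ at an interior point $y_0\in\Omega$, in which case $\zeta(0,y_0)=0$ by (\ref{curl null}); or it reaches $\partial\Omega$ at some earlier time $s\in[0,t^\ast)$, which, by definition of backward hitting, can only occur at a point where $V\cdot\n\le 0$, and there $\zeta(s,Y(s))=0$ by (\ref{null inflow}). In both cases the linear ODE along the characteristic forces $\zeta(t^\ast,y^\ast)=0$, and since $(t^\ast,y^\ast)$ was arbitrary, (\ref{rot cero}) follows.

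The main obstacle is the first step, namely the algebraic manipulation that identifies the curl of the convective and Coriolis terms and exploits the cancellation furnished by $\nabla(r\cdot v)=(r\cdot\nabla)v+r\times\zeta$ to eliminate the inhomogeneous gradient contribution; the rest is a textbook application of uniqueness for linear transport equations with zero initial and zero inflow data, provided one keeps track of the fact that it is the relative velocity $V$, not $v$, that defines both the characteristics and the notion of inflow.
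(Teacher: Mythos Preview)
Your argument is correct and follows the same route as the paper: derive a linear transport--stretching equation for $\zeta$ along the relative velocity $V=\tilde v=v-l-r\times y$, then propagate the zero initial and inflow data along characteristics. The paper writes the stretching term as $(\zeta\cdot\nabla)\tilde v$, which equals your $(\zeta\cdot\nabla)v-r\times\zeta$ since $(\zeta\cdot\nabla)(r\times y)=r\times\zeta$, and it records the characteristic solution via the Jacobian formula $\zeta(t,y)=G^{-1}(t^*,t,y)\zeta(t^*,y^*)$ rather than invoking uniqueness for the linear ODE, but these are cosmetic differences.
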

\begin{proof} Let us introduce $\tilde{v}:=v-l-r\times y$. Then it follows  from  (\ref{div null}) that 
\beq\label{div v tilde}
\textrm{div}(\tilde{v})=0,
\eeq
and 
\beq \label{rot v tilde}
\textrm{curl}(\tilde{v})=\zeta-2r.
\eeq
Applying the operator curl in (\ref{fixed domain system_i}) results in 
\beq
\frac{\partial \zeta }{\partial t}+\textrm{curl}((\tilde v\cdot\nabla)\tilde  v)+\textrm{curl}((\tilde v\cdot \nabla) (l+ r\times y)) +\textrm{curl}( r\times v)=0.
%\frac{\partial \omega}{\partial t}+\textrm{rot}((\tilde{v}\cdot\nabla)\tilde{v})+\textrm{rot}(r\times \tilde{v})+\textrm{rot}(r\times v)=0
\eeq
We note that the following identities hold:
\beq\label{identity 1}
\textrm{curl}(( v\cdot\nabla) v)=( v\cdot\nabla)\textrm{curl}( v)-(\textrm{curl}( v)\cdot\nabla) v+\textrm{div}( v)\textrm{curl}( v)\eeq
and
\beq\label{identity 2}
( v\cdot \nabla) (r\times y)=r\times v,\;\;\;\textrm{curl}(r\times v)=\textrm{div}(v)r-(r\cdot\nabla)v.
\eeq
Using \eqref{div v tilde}-\eqref{identity 2}, we see that $\zeta$ satisfies 
\beq\label{equation rot}
\frac{\partial \zeta }{\partial t}+(\tilde{v}\cdot\nabla)\zeta -(\zeta\cdot\nabla)\tilde{v}=0.
\eeq
Let $\varphi = \varphi(t, s, y)$ denote the flow associated with
$\tilde{v}$, i.e.
\beq\label{flux of v tilde}
\frac{\partial\varphi}{\partial t}=\tilde{v}(t,\varphi),\;\textrm{with}\; \varphi{|_{t=s}}=y.
\eeq
We denote by $G(t,s,y)=\frac{\partial\varphi}{\partial y}(t,s,y)$ the Jacobi matrix of $\varphi$. 
Differentiating in  \eqref{flux of v tilde} with respect to $y_{j}$ ($j=1,2,3$), we see that $G(t,s,y)$ satisfies the following equation:
\beq\label{equation for G}
\frac{\partial G}{\partial t}=\D\frac{\partial\tilde{v}}{\partial y}(t,\varphi(t,s,y))\cdot G(t,s,y),\;\textrm{where}\;\; G(s,s,y)=Id\; \textrm{(identity matrix)}.
\eeq
We infer from  \eqref{div v tilde} and \eqref{equation for G} that 
\beq\label{det G}
\textrm{det } G(t, s,y)=1.
\eeq

Following Yudovich \cite{Yudovich64}, we introduce the time $t^\ast(t,y)\in [0,t ] $ at which the fluid element first appears in $\overline{\Omega}$, 
and set $y^\ast(t,y) = \varphi(t^\ast(t,y),t,y)$. Then either $t^\ast = 0$, or $t^\ast > 0$ and $y^\ast \in\cup_{1\le j\le m} \textrm{ supp } \chi _j \subset \partial \Omega $ with 
$\sum_{j=1}^m w_j(t^\ast)\chi_j(y^\ast) \le 0$. Set $f(s,t,y)=G^{-1}(s,t,y)\zeta(s,\varphi(s,t,y))$. From \eqref{equation rot}-\eqref{det G}, we obtain that
\beq\label{lemma1:eq01}
\frac{\partial f}{\partial s}(s,t,y)=0.
\eeq 
Finally,  integrating with respect to $s$ in (\ref{lemma1:eq01})  yields
\beq
\zeta (t,y)=G^{-1}(t^\ast,t,y)\zeta (t^\ast,y^\ast),
\eeq
which, combined to (\ref{curl null}) and (\ref{null inflow}), gives (\ref{rot cero}). The proof of Proposition \ref{prop1} is complete.
\end{proof}
\begin{remark}
The issue whether the result in Proposition \ref{prop1} still holds with \eqref{null inflow} replaced by 
\[
\displaystyle \zeta (t,y)\cdot \tau _i = 0, \ i=1,2, \;\;\;	\textrm{if }\;	y\in \cup _{1\le j\le m} \text{ Supp } \chi _j
\ \text{ and } \ \sum_{j=1}^m w_j(t)\chi_j(y) \le 0,
\]
 seems challenging.
We notice that the result in \cite{Kazhikhov} was proved solely when $\Omega$ was a cylinder.  
\end{remark}

\subsection{Decomposition of the fluid velocity}
It follows from (\ref{div null}), (\ref{limit condition}) and (\ref{rot cero}) that the flow is potential; that is,
\beq
\label{form v}
v=\nabla \Phi,
\eeq
where $\Phi = \Phi(t, y)$ solves
\beq
\Delta \Phi=0,\;\;\; \textrm{in }(0,T)\times\Omega, 
\eeq
\beq
\frac{\partial \Phi}{\partial \nu}=(l+r\times y)\cdot\n+\sum\limits_{1\leq j\leq m}w_j(t)\chi_j(y)\;\;\; \textrm{on }(0,T)\times\Omega ,
\eeq
\beq
\lim\limits_{|y|\to+\infty}\nabla \Phi(t,y)=0, \;\;\;\textrm{on }(0,T).
\eeq
Actually, $\Phi$ may be decomposed as
\beq\label{form Phi}
\Phi(t,y)=\sum\limits_{1\leq i\leq 3}\big\{l_i\phi_i+r_i\varphi_i\big\}+\sum\limits_{1\leq j\leq m}w_j\psi_j
\eeq
where, for $i=1,2,3$ and $j=1,...,m$,
\beq
\label{eq for phi}
\Delta\phi_i =\Delta\varphi_i =\Delta\psi_j =0 \textrm{ in } \Omega, 
\eeq
\beq\label{eq for phi:01}
\D\frac{\partial \phi_i}{\partial \n} = \n _i,\;\;\D\frac{\partial \varphi_i}{\partial \n} 
= (y\times \n)_i,\;\; \D\frac{\partial \psi_j}{\partial \n} = \chi_j   \textrm{ on }\partial \Omega ,
\eeq
\beq\label{eq for phi:02}
\D\lim\limits_{|y|\to+\infty}\nabla \phi _i(y)=0,\;\; \D\lim\limits_{|y|\to+\infty}\nabla \varphi _i(y)=0,\;\;\D\lim\limits_{|y|\to+\infty}\nabla \psi _j(y)=0 .
\eeq

As the open set $\Omega$ and the functions $\chi_j$, $1 \leq j \leq m$, 
supporting the control are assumed to be smooth, we infer that the functions $\nabla\phi_i$ ($i = 1,2,3$), 
the functions $\nabla \varphi _i$ ($i=1,2,3$) and the functions $\nabla\psi_j$ ($1 \leq j \leq m$) belong to $H^{\infty}(\Omega)$. 
\subsection{Equations for the linear and angular velocities}

For notational convenience, in what follows $\int_{\Omega} f$ (resp. $\int_{\partial \Omega} f$) stands for $\int_{\Omega } f(y)dy$  
(resp. $\int_{\partial \Omega} f(y) d\sigma (y)$).
  
Let us introduce the matrices  $M,J,N\in \R^{3\times 3}$, $C^{M},C^{J}\in \R^{3\times m}$, 
 $L^M_{p},L^J_{p},R^M_{p},R^J_{p}\in\R^{3\times 3}$, and the matrices $W^M_{p},W^J_{p}\in \R^{3\times m}$ for $p\in \{ 1, ... , m \} $ defined by
\beq \label{matrix 1}
M_{i,j} =\int\limits_{\Omega}\nabla\phi_{i} \cdot \nabla\phi_{j}  =\Bi{\n_{i}\phi_{j}}=\Bi{\frac{\partial \phi_{i}}{\partial \n}\phi_{j}},
\eeq
\beq J_{i,j}  =\int\limits_{\Omega}\nabla\varphi_{i} \cdot \nabla\varphi_{j}  =\Bi{(y\times \n)_{i}\varphi_{j}}=\Bi{\frac{\partial \varphi_{i}}{\partial \n}\varphi_{j}},
\eeq

\beq N_{i,j}=\int\limits_{\Omega}\nabla\phi_{i}\cdot \nabla\varphi_{j}=\Bi{\n_{i}\varphi_{j}}=\Bi{\phi_{i}(y\times \n)_{j}},
\eeq

\beq (C^{M})_{i,j}=\int\limits_{\Omega}\nabla\phi_{i} \cdot \nabla\psi_{j}=\Bi{\n_{i}\psi_{j}}=\Bi{\phi_{i}\chi_{j}},
\eeq

\beq (C^{J})_{i,j}=\int\limits_{\Omega}\nabla\varphi_{i} \cdot \nabla\psi_{j}=\Bi{(y\times \n)_{i}\psi_{j}}=\Bi{\varphi_{i}\chi_{j}},
\eeq

\ba
 (L^M_{p})_{i,j}=\D\Bi{(\nabla\phi_{j})_{i}\chi_{p}}, &&(L^J_{p})_{i,j}=\D\Bi{(y\times \nabla\phi_{j})_{i}\chi_{p}},  \\
 (R^M_{p})_{i,j}=\D\Bi{(\nabla\varphi_{j})_{i}\chi_{p}}, && (R^J_{p})_{i,j}=\D\Bi{(y\times \nabla\varphi_{j})_{i}\chi_{p}}, \\ 
 (W^M_{p})_{i,j}=\D\Bi{(\nabla\psi_{j})_{i}\chi_{p}}, && (W^J_{p})_{i,j}=\D\Bi{(y\times \nabla\psi_{j})_{i} \chi_{p}}. \label{matrix 6}
\ea

Note that $M^{\ast}=M$ and $J^{\ast}=J.$ 

Let us now reformulate the equations for the motion of the rigid body. We define the
matrix $\mathcal{J} \in \R^{6\times6}$ by
\beq\label{matrix J}
\mathcal{J}=
\left(\begin{array}{cc}
m_0\,  Id & 0\\
0 & J_{0}
\end{array}\right)
+\left(\begin{array}{cc}
M & N\\
N^{\ast} & J
\end{array}\right).
\eeq
It is easy to see that $\mathcal{J}$  is a (symmetric)  positive definite matrix.
We associate to the (linear and angular) velocity $(l,r)\in \R ^3\times \R ^3$ of the rigid body a momentum-like quantity, the so-called 
{\em impulse} $(P,\Pi)\in\R^{3}\times \R ^3$, defined by
\beq
\mathcal{J}\left(\begin{array}{c}
l\\
r
\end{array}\right)=\left(\begin{array}{c}
P\\
\Pi
\end{array}\right) .
\eeq
We are now in a position to give the equations governing the dynamics of the impulse.
\begin{proposition}
\label{kirchhoff}
\label{system of velocity}
The dynamics of the system are governed by the following  Kirchhoff equations
\beq\label{kirchhoff system}
\begin{array}{rcl}
\D\frac{dP}{dt}+C^{M}\dot w&=&\D (P+C^{M}w)\times r-\sum\limits_{1\leq p\leq m}w_{p}\left\{ L^M_{p}l+R^M_{p}r+W^M_{p}w\right\} ,\\ 
\\
\D\frac{d\Pi}{dt}+C^{J}\dot w&=&\D(\Pi+C^{J} w)\times r+(P +C^{M}w) \times l-\sum\limits_{1\leq p\leq m}w_{p}\left\{ L^J_{p}l+R^J_{p}r+W^J_{p}w\right\}, 
\end{array}
\eeq
where $w(t):=(w_1(t),...,w_m(t))\in\R^m$ denotes the control input.
\end{proposition}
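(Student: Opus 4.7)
The plan is to derive (\ref{kirchhoff system}) by substituting a Bernoulli-type formula for the pressure into (\ref{eq. of l})--(\ref{eq. of r}) and expanding via the decomposition (\ref{form Phi}) and the matrix definitions (\ref{matrix 1})--(\ref{matrix 6}). Since $v=\nabla\Phi$ is curl-free and $\mathrm{curl}(l+r\times y)=2r$, the vector identities
\[
(v\cdot\nabla)v=\tfrac{1}{2}\nabla|v|^{2},\qquad \bigl((l+r\times y)\cdot\nabla\bigr)v=\nabla\bigl((l+r\times y)\cdot v\bigr)+r\times v
\]
rewrite (\ref{fixed domain system_i}) in pure gradient form, and fixing the gauge by (\ref{limit condition}) yields
\[
q=-\partial_{t}\Phi-\tfrac{1}{2}|\nabla\Phi|^{2}+(l+r\times y)\cdot\nabla\Phi.
\]
Plugging the decomposition (\ref{form Phi}) of $\partial_t\Phi$ into the boundary integrals of (\ref{eq. of l})--(\ref{eq. of r}) produces, via (\ref{matrix 1})--(\ref{matrix 6}), the contributions $-M\dot l-N\dot r-C^{M}\dot w$ and $-N^{\ast}\dot l-J\dot r-C^{J}\dot w$ respectively; combined with the inertial terms $m_{0}\dot l$ and $J_{0}\dot r$ and the definition of $\mathcal{J}$, these assemble exactly $\tfrac{dP}{dt}+C^{M}\dot w$ and $\tfrac{d\Pi}{dt}+C^{J}\dot w$ on the left-hand sides of (\ref{kirchhoff system}).

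The core of the argument is the extraction of the nonlinear terms. The first key tool is the harmonicity identity
\[
\tfrac{1}{2}\int_{\partial\Omega}|\nabla\Phi|^{2}\n\,d\sigma=\int_{\partial\Omega}(\partial_{\n}\Phi)\,\nabla\Phi\,d\sigma,
\]
obtained by applying the divergence theorem to $\partial_{i}\Phi\,\nabla\Phi$ over $\Omega$ and using $\Delta\Phi=0$; an analogous version with $\n$ replaced by $y\times\n$ and $\nabla\Phi$ by $y\times\nabla\Phi$ follows from the divergence theorem applied to $\epsilon_{ijk}y_{j}\partial_{k}\Phi\,\nabla\Phi$. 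Substituting the boundary condition $\partial_{\n}\Phi=(l+r\times y)\cdot\n+\sum_{j}w_{j}\chi_{j}$ and invoking the commutator identity
\[
\int_{\partial\Omega}\!\bigl[(a\cdot\nabla\Phi)\n-(a\cdot\n)\nabla\Phi\bigr]d\sigma=\Bigl(\int_{\partial\Omega}\Phi\,\n\,d\sigma\Bigr)\times r\qquad (a=l+r\times y),
\]
proved by Schwarz's theorem applied to the tensor fields $y_{l}\partial_{j}\Phi\,e_{i}-y_{l}\partial_{i}\Phi\,e_{j}$, the quadratic remainder collapses to
\[
\Bigl(\int_{\partial\Omega}\Phi\,\n\,d\sigma\Bigr)\times r-\sum_{p}w_{p}\int_{\partial\Omega}\chi_{p}\nabla\Phi\,d\sigma.
\]
Inspection of (\ref{form Phi}) and (\ref{matrix 1})--(\ref{matrix 6}) gives $\int\Phi\,\n\,d\sigma=Ml+Nr+C^{M}w=P-m_{0}l+C^{M}w$ and $\int\chi_{p}\nabla\Phi\,d\sigma=L^{M}_{p}l+R^{M}_{p}r+W^{M}_{p}w$, so the first contribution becomes $(P+C^{M}w)\times r+m_{0}\,r\times l$, whose second piece cancels the Coriolis term $-m_{0}r\times l$ coming from (\ref{eq. of l}). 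This yields the first line of (\ref{kirchhoff system}); the angular equation is treated identically, with the $y\times\n$ variant of the commutator identity combined with $\int\Phi\,(y\times\n)\,d\sigma=N^{\ast}l+Jr+C^{J}w=\Pi-J_{0}r+C^{J}w$ producing $(\Pi+C^{J}w)\times r+(P+C^{M}w)\times l$ after absorbing $-r\times J_{0}r$.

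The main obstacle is the rigorous justification of the harmonicity and commutator identities on the unbounded domain $\Omega$: since $\Phi$ and its derivatives decay only at the dipole rate ($\Phi=O(|y|^{-2})$, $\nabla\Phi=O(|y|^{-3})$), the surface integrals at infinity arising from integration by parts involve tensor fields of borderline integrability, and one must expand the leading dipole contribution of $\Phi$ at infinity and verify explicit cancellation (or use a limiting procedure on $\Omega\cap B_{R}$) to conclude that only the body-boundary contributions survive. Once those convergence issues are settled, the remainder of the proof is purely algebraic bookkeeping: identifying cross products built from the impulses $P$ and $\Pi$ and matching coefficients against the six matrices $L^{M}_{p},R^{M}_{p},W^{M}_{p},L^{J}_{p},R^{J}_{p},W^{J}_{p}$.
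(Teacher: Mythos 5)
Your proposal is correct and follows essentially the same route as the paper: the Bernoulli formula for $q$, the divergence-theorem identity $\int_{\partial\Omega}\tfrac{|v|^2}{2}\n\,d\sigma=\int_{\partial\Omega}(v\cdot\n)v\,d\sigma$, reduction of the quadratic remainder to $\bigl(\int_{\partial\Omega}\Phi\,\n\,d\sigma\bigr)\times r$ and $\sum_p w_p\int_{\partial\Omega}\chi_p\nabla\Phi\,d\sigma$ via curl-of-gradient (Schwarz) identities, and matching against \eqref{matrix 1}--\eqref{matrix 6}; the paper packages your ``commutator identity'' as $\int_{\partial\Omega}\n\times\nabla f\,d\sigma=0$ (proved with a cutoff) combined with Lagrange's and Jacobi's identities. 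The decay issue you flag is not actually borderline: the paper invokes the estimates $|v|=O(|y|^{-2})$, $|\nabla v|=O(|y|^{-3})$ from Kikuchi, under which all surface integrals at infinity vanish.
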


\begin{proof} We first express the pressure $q$ in terms of $l,r,v$ and their derivatives. Using \eqref{rot cero}, we easily obtain 
\beq\label{eq nabla v}
v\cdot\nabla v=\nabla\frac{|v|^2}{2} \;\;\;\textrm{and}\;\;\; (r\times y)\cdot\nabla v-r\times v=\nabla((r\times y)\cdot v)
\eeq
Thus \eqref{fixed domain system_i} gives
$$
\begin{array}{lcl}
\D-\nabla q&=&\D \frac{\partial v}{\partial t} +\nabla \left(\frac{|v|^2}{2} -l\cdot v-(r\times y)\cdot v\right)\\ \\
&=&\D\nabla\left(\sum\limits_{1\leq i\leq 3}\big\{\dot l_i\phi_i+\dot r_i\varphi_i\big\}+\sum\limits_{1\leq j\leq m}\dot w_j\psi_j +\frac{|v|^2}{2} -l\cdot v-(r\times y)\cdot v  \right)
\end{array}
$$
hence we can take 
\beq\label{expresion q}
q=-\bigg\{\sum\limits_{1\leq i\leq 3}\big\{ \dot l_i\phi_i+ \dot r_i\varphi_i\big\}+\sum\limits_{1\leq j\leq m}\dot w_j\psi_j +\frac{|v|^2}{2} -\left(l+(r\times y)\right)\cdot v  \bigg\}
\eeq
Replacing $q$ by its value in  (\ref{eq. of l}) yields 
\beq\label{eq for l in term of v}
m_0\dot l=-m_0r\times l-\bigg\{
\sum\limits_{1\leq i\leq 3}\bigg( \dot l_i\Bi{\phi_i\n}+\dot r_i\Bi{\varphi_i\n}\bigg) 
+\sum\limits_{1\leq j\leq m}\dot w_j\Bi{\psi_j\n} +\Bi{\left(\frac{|v|^2}{2} -\left(l+(r\times y)\right)\cdot v\right)\n}
  \bigg\} .
\eeq 
%From \eqref{form v} and  \eqref{form Phi}, we infer that 
%\beq \label{v in terms of nabla Phi}
%v(t,y)=\sum_{i=1}^3	\Big\{l_i(t)\nabla\phi_i(y)+r_i(t)\nabla\varphi_i(y)\Big\}+\sum_{j=1}^n w_j(t)\nabla\psi_j(y)
%\eeq
Using \eqref{eq nabla v} and \eqref{div null}-\eqref{boundary condition}, we
obtain %after some integrations by parts that
\ba
\D\int_{\partial \Omega} {\frac{|v|^2}{2}\nu} &=&\D \Iomega{\nabla\frac{|v|^2}{2}}\nonumber \\
&=& \D\Iomega{v\cdot\nabla v}\nonumber \\
&=&\D-\Iomega{(\textrm{div}\, v)v}+\Bi{(v\cdot\n)v}\nonumber \\
&=&\D\Bi{\left((l+r\times y)\cdot\n\right)v}+\sum\limits_{1\leq j\leq m}w_j(t)\Bi{\chi_j(y)v}.
\label{v in terms of nabla Phi}
\ea
Using Lagrange's formula:
\beq\label{identidad producCruz}
a\times(b\times c)=(a\cdot c)b-(a\cdot b)c,\;\;\forall a,b,c\in\R^3,
\eeq
we obtain that 
\beq\label{eqL l+rxy}
\Bi{\left((l+r\times y)\cdot\n\right)v-\left(\left(l+r\times y\right)\cdot v\right)\n}=\Bi{(l+r\times y)\times(v\times \n)}.
\eeq

Now we claim that
\beq\label{Control:rela f}
\Bi{\n\times\nabla f}=0,\;\;\; \forall f\in C^2(\overline{ \Omega } ).
\eeq 
To prove the claim, we introduce a smooth cutoff function $\rho_a$ such that 
$$
\rho_a(y)=\left\{
\begin{array}{ll}
1 &\text{if } \ |y|<a,\\
0 &\text{if }\  |y|>2a.
\end{array}\right.
$$
Pick a radius $a>0$ such that $\S \subset B(0,a)$, and set   
\beq\label{funcion tilda}
\tilde f(y)=f(y)\rho_a(y).
\eeq
Then 
$$\nabla \tilde f(y)=\nabla f(y),\;\;\forall y\in\partial \Omega,$$
and using the divergence theorem, we obtain 
$$\Bi{\n\times \nabla f}=\Bi{\n\times \nabla\tilde f}=\Iomega{\textrm{curl}( \nabla\tilde f})=0.$$

Therefore, using \eqref{Control:rela f} with $f=\Phi$ where $\nabla \Phi=v$, we obtain
\beq
\label{NNN}
\Bi{l\times(v\times \n)}=0.
\eeq 
Another application of \eqref{Control:rela f}  with $f=y_i\Phi$ yields 
\beq\label{rela yiV}
\Bi{y_iv\times\n}=\Bi{\n\times\e_i \Phi},
\eeq
where $\{ \e _1,\e _2 ,\e _3\}$ denotes the canonical basis in $\R ^3$. 
It follows from \eqref{NNN}, \eqref{rela yiV}, and \eqref{identidad producCruz}
that
\begin{multline}
\label{r wedge v}
\D\Bi{(l+r\times y)\times(v\times \n)}=\D r\times\Bi{\Phi\n}  \\
=\D r(t)\times \bigg( \sum_{i=1}^3\Big\{	 l_i(t)\Bi{\phi_i(y)\n(y)}+r_i(t)\Bi{\varphi_i(y)\n(y)}\Big\}+\sum_{j=1}^mw_j(t)\Bi{\psi_j(y)\n(y)}\bigg).
\end{multline}

Combining \eqref{eq for l in term of v} with 
\eqref{v in terms of nabla Phi}, \eqref{eqL l+rxy}, and \eqref{r wedge v} yelds 
 \ba
m_0\dot l&=&\D-\left\{\sum_{i=1}^3\dot l_i\Bi{\phi_i\n}+\dot r_i\Bi{\varphi_i\n}+\sum_{j=1}^m\dot w_j\Bi{\psi_j\n}\right\} \nonumber \\
& &\D-\sum_{j=1}^m w_j\left\{\sum_{i=1}^3l_i\Bi{\chi_j\nabla\phi_i }+ r_i\Bi{\chi_j\nabla\varphi_i}+\sum_{p=1}^m w_p\Bi{\chi_j\nabla\psi_p}\right\}  \nonumber \\
& &\D -r\times \left\{\sum_{i=1}^3 l_i\Bi{\phi_i\n}+ r_i\Bi{\varphi_i\n}+\sum_{j=1}^m w_j\Bi{\psi_j\n}\right\} \nonumber \\
& & \D-m_0r\times l.
\label{eq l 2}
\ea

Let us turn our attention to the dynamics of $r$. Substituting the expression of $q$ given in \eqref{expresion q} in \eqref{eq. of r} yields

%\beq\label{Ident 1}
%\Bi{f(y)(y\times \n(y))}=\Iomega{y\times \nabla f}
%\eeq
%for any $f:\Omega\subset\R^3\to \R.$ Using (\ref{Ident 1}) with $f=q$ in (\ref{eq. of r}), we obtain   

\begin{multline}
\label{eq for r 2}
\D J_0 \dot r=\D-r\times J_0r-\sum\limits_{1\leq i\leq 3}\left\{\dot l_i\Bi{\phi_i (y\times \n)}+\dot r_i\Bi{\varphi_i (y\times \n)}\right\}
-\sum\limits_{1\leq j\leq m}\dot w_j\Bi{\psi_j(y\times \n)}\\
 -\D \Bi{\left(\frac{|v|^2}{2} -\left(l+(r\times y)\right)\cdot v \right)(y\times \n)} 
\end{multline}
%Applying (\ref{Ident 1}) with $f=\phi$, $f=\varphi$ and $f=\psi$, respectively, we obtain 
%\beq\label{vt wedge y}
%\D\int\limits_{\Omega} \frac{\partial v}{\partial t}\times y=
%\D-\sum_{i=1}^3\Big\{ \dot l_{i}\Bi{(y\times \n)\phi_{i}}+\dot r_{i}\Bi{ (y \times \n) \varphi_{i}}\Big\}-\sum_{j=1}^n\dot w_{j}\Bi{ (y\times \n)\psi_{j}}.
%\eeq
From \cite[Proof of Lemma 2.7]{Kikuchi86}, we know that
 \[
 |v(y)|=|\nabla \Phi  (y)| = O(|y|^{-2}),\quad |\nabla v(y) | = O ( |y|^{-3} )\quad \text{ as } |y|\to \infty , 
 \]
 so that
 \be
 \label{L1}
 v \in L^2(\Omega ),  \quad  |y|\cdot |v|\cdot  |\nabla v| \in L^1(\Omega ). 
  \ee
Note that, by \eqref{eq nabla v} and  \eqref{div null}, 
\begin{eqnarray*}
\text{div} (  \frac{|v|^2}{2} (  \hat e _i \times y ) ) 
&=& \nabla (\frac{|v|^2}{2} )\cdot  (  \e _i\times y )  + \frac{|v|^2}{2}\text{div} (\e _i \times y ) \\
&=& (v\cdot \nabla v) \cdot (\e _i\times y )\\
&=& v\cdot \nabla (y\times v)_i \\
&=& \text{div} \big( (y\times v)_i  v\big) ,  
\end{eqnarray*}
and hence, using \eqref{L1} and the divergence theorem,

 %and  \eqref{boundary condition},  we obtain that
\ba
\D \int_{\partial \Omega} {  \frac{|v|^2}{2}  (y\times \n)_i} 
&=&\int_{\partial \Omega} {  \frac{|v|^2}{2}  (\e _i \times y )\cdot \nu }  \nonumber \\
&=& \D\Iomega{\textrm{div }\big( \frac{|v|^2}{2} (\e_i\times y)\big) } \nonumber \\
&=& \D\int_{\Omega} \textrm{div }   {\big( (y\times v)_i  v\big) }  \nonumber \\
&=& \D\Bi{(v\cdot\n) (y\times v)_i} \nonumber \\
&=&\D \Bi{(l+r\times y)\cdot\n(y\times v)_i}+\sum\limits_{1\leq j\leq m}w_j(t)\Bi{\chi_j(y\times v)_i}.
\label{eq int por partes vv}
\ea

Furthermore, using \eqref{identidad producCruz} we have that
\ba
&&\D \int_{\partial \Omega } {(l+r\times y)\cdot\n(y\times v)_i-\left(l+(r\times y)\right)\cdot v (y\times \n)_i} \nonumber\\
%&&\qquad =  \D \int_{\partial \Omega} {(l+r\times y)\cdot\left(\n(y\times v)_i- v (y\times \n)_i\right)} \nonumber \\ 
&&\qquad =\D \Bi{(l+r\times y)\cdot\bigg( \big( (\e_i\times y)\cdot v\big) \n -  \big( (\e_i\times y )\cdot \n\big) v\bigg)}\nonumber \\
&&\qquad =\D \Bi{(l+r\times y)\cdot\left((\e_i\times y)\times( \n\times v)\right)}.
\label{eq2 l+rxy}
\ea

Combining the following identity
\beq
\label{propiedadCruzContrac}
\sum_{j=1}^3(a\times \e_j)\times (\e_j \times b)=-(a\times b),\;\;\;\forall a,b\in\R^3
\eeq
with  \eqref{rela yiV}, we obtain
\ba
\D \int_{\partial \Omega } {l\cdot\left((\e_i\times y)\times( \n\times v)\right)}
&=&\D\sum_{j=1}^3 \ \Bi{l\cdot\left( (\e_i\times \e_j)\times( \n\times y_jv)\right)} \nonumber  \\
&=&\D\sum_{j=1}^3 l\cdot\bigg((\e_i\times \e_j)\times \Bi{(\n\times y_jv)}\bigg) \nonumber \\
&=&\D\sum_{j=1}^3 l\cdot\bigg( (\e_i\times \e_j)\times  \Bi{(\e_j\times \n)\Phi} \bigg)  \nonumber \\
&=&-\D l\cdot\Bi{\left(\e_i\times \n\right)\Phi}=\D \Bi{\left(l\times \n\right)_i\Phi}.
\label{eq 2 de l en J}
\ea

For any given $f\in C^2(\overline{\Omega })$, let 
$$I:= \D \Bi{(r\times y)\cdot\left((\e_i\times y)\times( \n\times \nabla f)\right)}.$$
$\tilde f$ still denoting  the function  defined in \eqref{funcion tilda}, we have that 
\begin{eqnarray*}
I&=& \D \sum_{j=1}^3\ \Bi{(r\times y)\cdot\left((\e_i\times y)\times( \e_j\times \nabla \tilde f)\right)\nu _j}\\
& =&\D \sum_{j=1}^3\bigg\{\Iomega{(r\times \e_j)\cdot\left((\e_i\times y)\times( \e_j\times \nabla \tilde f)\right)}\bigg\}
+\sum_{j=1}^3\bigg\{\Iomega{(r\times y)\cdot\left((\e_i\times \e_j)\times( \e_j\times \nabla \tilde f)\right)}  \bigg\} \\
&& \qquad \D+\sum_{j=1}^3\bigg\{\Iomega{(r\times y)\cdot\left((\e_i\times y)\times( \e_j\times \partial_j\nabla \tilde f)\right)}\bigg\} .
\end{eqnarray*}
Using again \eqref{propiedadCruzContrac}, we obtain
\begin{eqnarray*}
I&=&-\D \sum_{j=1}^3\left\{\Iomega{(\e_i\times y)\cdot\left((r\times \e_j)\times( \e_j\times \nabla \tilde f)\right)}\right\}-\Iomega{(r\times y)\cdot(\e_i\times \nabla \tilde f)}\\
&&\qquad \D+\Iomega{(r\times y)\cdot\left((\e_i\times y)\times \textrm{rot}(\nabla\tilde f)\right)}\\
&=&\D \Iomega{(\e_i\times y)\cdot(r\times  \nabla \tilde f)}-\Iomega{(r\times y)\cdot(\e_i\times \nabla \tilde f)}\\
&=&\D -\Iomega{r\cdot\big( (\e_i\times y)\times  \nabla \tilde f\big) }-\Iomega{r\cdot \big( y\times(\e_i\times \nabla \tilde f ) \big) }\\
&=&- \Iomega{r\cdot\left\{ (\e_i\times y)\times  \nabla \tilde f+ y\times(\e_i\times \nabla \tilde f)\right\}}\\
&=&-\D \Iomega{r\cdot\left\{\e_i\times  (y\times \nabla \tilde f)\right\}}
=\Iomega{\left(r\times  (y\times \nabla \tilde f)\right)_i}=\D\Bi{\left(r\times  (y\times \n )f\right)_i},
\end{eqnarray*}
where we used Jacobi identity
\[
a \times ( b \times c )+ b \times (c \times a ) + c \times ( a \times b) =0\qquad \forall a,b,c\in \R ^3.
\]
Letting $f=\Phi$ in the above expression yields
\be
\label{eq 2 de r en J}
\D \Bi{(r\times y)\cdot\left((\e_i\times y)\times( \n\times v)\right)}
=\D\Bi{\left(r\times  (y\times \n )\Phi\right)_i}.
\ee

Gathering together \eqref{eq for r 2}, \eqref{eq int por partes vv}, \eqref{eq2 l+rxy}, \eqref{eq 2 de l en J}), and \eqref{eq 2 de r en J} yields
\begin{eqnarray}
J_0 \dot r&=&\D \sum_{i=1}^3\Big( \dot l_{i}\Bi{(\n\times y)\phi_{i}}+\dot r_{i}\Bi{ (\n \times y) \varphi_{i}}\Big)+\sum_{j=1}^m \dot w_{j}\Bi{ (\n\times y)\psi_{j}}  \nonumber \\
&+&\D \sum_{j=1}^m w_{j}\left\{\sum_{i=1}^3\Big( l_{i}\Bi{(\nabla \phi_{i}\times y)\chi_{j}}+r_{i}\Bi{(\nabla\varphi_{i}\times y)\chi_{j}}\Big)
+\sum_{p=1}^m w_{p}\Bi{(\nabla\psi_{p}\times y)\chi_{j}}\right\}\nonumber \\
&-&\D l\times\left\{\sum_{i=1}^3\Big( l_{i}\Bi{\phi_{i}\n}+r_{i}\Bi{\varphi_{i}\n}\Big)+\sum_{p=1}^m w_{p}\Bi{\psi_{p}\n}\right\}\nonumber \\
&-&\D r\times\left\{\sum_{i=1}^3\Big( l_{i}\Bi{(y\times \n)\phi_{i}}+r_{i}\Bi{(y\times \n)\varphi_{i}}\Big)+\sum_{p=1}^m w_{p}\Bi{(y\times \n)\psi_{p}}\right\} \nonumber\\
&-&r\times J_0r.\label{eq r 2}
\end{eqnarray}
Combining  \eqref{eq l 2}  and \eqref{eq r 2} with the definitions of the matrices in \eqref{matrix 1}-\eqref{matrix 6}, we obtain 
\ba
\D m_0\dot l &=&-M \dot l- N \dot r -C^{M}\dot w  - \sum_{1\le p\le m} w_{p}\left\{ L^M_{p}l+R^M_{p}r+W^M_{p}w\right\} \nonumber \\
&& -r\times (M l+N r+C^{M}w) -m_0 r\times l,\label{S1}\\
\D J_{0}\dot r &=&- N^\ast\dot l-J \dot r -C^{J}\dot w
-\sum_{1\le p\le m} w_{p}\left\{ L^J_{p}l+R^J_{p}r+W^J_{p}w\right\} \nonumber \\
 && -l\times (M l+N r+C^{M}w) \nonumber \\
 &&- r\times (N^\ast l+J r+C^{J}w)  -r\times J_{0}r. 
\label{S2}
\ea
This completes the proof of Proposition \ref{kirchhoff}.
\end{proof}

%%%%%%%%%%%%%%%%%%%%%%%%%%%%%%%%
\subsection{Equations for the position and attitude}
Now, we look at  the dynamics of the position and attitude of the rigid body. 
We shall use unit quaternions. (We refer the reader to the Appendix for the notations and definitions used in what follows.) 
From  \eqref{eq for Q} and \eqref{def r}, we obtain 
%\beq Q'_{\cdot j}=Q' e_{j}=\theta'\times Q e_{j}=(Qr)\times Q e_{j}=Q(r\times\e_{j})\eeq
\beq Q'=S(Qr)Q=QS(r),\eeq
with $Q(0)=Id$. 
%Let us introduce the operator $\Theta:\mathcal{C}([0,T],\R^{3})\to \mathcal{C}([0,T],\R^{3\times 3}),$
%where  $\Theta[f](\cdot)$ denotes the solution of the system 
%\beq\label{equation for Q}
%\left\{\begin{array}{rcl}
%Q'(t)&=&QS(f(t)), \;\;\; t\in [0,T]\\ \\
%Q(0)&=& Id
%\end{array}\right.\eeq 

Assuming that $Q(t)$ is associated with a unit quaternion $q(t)$, i.e. $Q(t)=R(q(t))$, then the dynamics of $q$ are given by
\be
\label{AP2}
\dot q = \frac{1}{2} q * r
\ee 
(see e.g. \cite{ST}). Expanding $q$ as $q=q_0+\vec q =q_0 + q_1 i + q_2 j +q_3 k $, this yields
\be
\dot q_0 + \dot {\vec q}= \frac{1}{2} ( -\vec q\cdot r + q_0 r + \vec q \times r)  
\label{AP3}
\ee
and
\be
\left(
\begin{array}{c}
\dot q_0\\
\dot q_1\\
\dot q_2\\
\dot q_3
\end{array}
\right) 
=
\frac{1}{2}
\left(
\begin{array}{cccc}
q_0 & -q_1 & -q_2 & -q_3 \\
q_1 & q_0  & -q_3 & q_2   \\
q_2 & q_3& q_0 & - q_1 \\
q_3 & -q_2 & q_1& q_0 
\end{array}
\right) 
\left(
\begin{array}{c}
0\\
r_1\\
r_2\\
r_3
\end{array}
\right) \cdot
\label{AP30} 
\ee 

From \eqref{def l}, we see that the dynamics of $h$ are given by 
\be
\dot h(t) = Q(t)\, l(t).\label{AP5}
\ee
Again, if $Q(t)=R(q(t))$, then \eqref{AP5} may be written  as
\be
\label{AP6}
\dot h = q * l * q^*.
\ee 
Expanding $q$ as $q=q_0+\vec q = q_0 + q_1i+q_2j+q_3k$, we obtain 
\[
\dot h = (q_0+\vec q ) * l* (q_0-\vec q) = q_0^2 l + 2q_0 {\vec q}\times l   + (l\cdot \vec q\, ) \vec q  -{\vec q}\times l \times {\vec q}.  
\]
and 
\be
\left(
\begin{array}{c}
\dot h_1\\
\dot h_2\\
\dot h_3
\end{array}
\right) 
=
\left( 
\begin{array}{ccc}
q_0^2 +q_1^2 -q_2^2 -q_3^2   &  2(q_1q_2-q_0q_3)                       &  2(q_1q_3+q_0q_2)                    \\
2(q_2q_1+q_0q_3)                      &  q_0^2 -q_1^2 +q_2^2 -q_3^2   &  2(q_2q_3 -q_0q_1)                    \\
2(q_3q_1-q_0q_2)                       &  2(q_3q_2 + q_0q_1)                    &  q_0^2 -q_1^2 -q_2^2 +q_3^2
\end{array}
\right) 
\left(
\begin{array}{c}
l_1\\
l_2\\
l_3
\end{array}
\right) \cdot
\label{AP40}
\ee

For $q\in S^3_+$, $q$ may be parameterized by $\vec q$, and it is thus sufficient to consider the dynamics of $\vec q$ which read
\be
\dot{\vec q} = \frac{1}{2} (\sqrt{1- ||\vec q\, ||^2} \, r + \vec q \times r ) .
\label{AP33}
\ee
The dynamics of $h$ are then given by 
\be
\dot h = (1-|| \vec q\, ||^2) l+ 2\sqrt{1-||\vec q\, || ^2}\,  \vec q\times l  + (l\cdot \vec q ) \vec q - \vec q \times l\times \vec q.
\label{AP7}
\ee
(Alternatively, one can substitute $\sqrt{1-(q_1^2+q_2^2+q_3^2)}$ to $q_0$ in both \eqref{AP30} and \eqref{AP40}.)\\

\subsection{Control system for the underwater vehicule}
Using \eqref{AP2}, \eqref{AP6}, and Proposition \ref{system of velocity}, we arrive to  
\beq\label{system pq}
\left\{\begin{array}{ccl}
h'&=& q * l * q^* , \\[3mm]
q'&=& \displaystyle \frac{1}{2} q * r, \\[3mm]
\left(\begin{array}{c}
l\\
r
\end{array}\right)'&=&\mathcal{J} ^{-1}(C w'+F(l,r,w)),
\end{array}\right. \eeq
where $(h,q,l,r,w)\in \R ^3\times S^3\times \R ^3\times \R ^3\times \R ^m$, 
\beq\label{non lineal function}
\begin{array}{rcl}
F(l,r,w)&=& -\left(\begin{array}{cc} S(r)  & 0\\ \\ S( l )  & S(r) \end{array}\right)\left(\mathcal{J}\left(\begin{array}{c} l\\r\end{array}\right) -Cw\right) 
-\sum\limits_{p=1}^m w_{p}
\left(\begin{array}{c} \D  L^M_{p}l+R^M_{p}r+W^M_{p}w \\ \\ \D  L^J_{p}l+R^J_{p}r+W^J_{p}w 
\end{array}\right),
\end{array}
\eeq
and 
\beq\label{matrix C}
C=-\left(\begin{array}{c}C^{M}\\C^{J}\end{array}\right).
\eeq

For $q\in  S^3_+$ (i.e. $Q\in {\mathcal O}$), one can replace the two first equations in \eqref{system pq} by \eqref{AP7} 
and \eqref{AP33}, respectively. This results in the system
\beq\label{systempq}
\left\{\begin{array}{ccl}
h'&=&  (1-|| \vec q\, ||^2) l+ 2\sqrt{1-||\vec q\, || ^2}\,  \vec q\times l  + (l\cdot \vec q\, )\vec q - \vec q \times l\times \vec q, \\[3mm]
{\vec q\, }'&=& \frac{1}{2} (\sqrt{1- ||\vec q\, ||^2} \, r + \vec q \times r ), \\[3mm]
\left(\begin{array}{c}
l\\
r
\end{array}\right)'&=&\mathcal{J} ^{-1}(C w'+F(l,r,w)).
\end{array}\right. \eeq

\section{Control properties of the underwater vehicle}
\subsection{Linearization at the equilibrium}
When investigating the local controllability of a nonlinear system around an equilibrium point, it is natural to look first at its linearization
at the equilibrium point.
To linearize the system (\ref{system pq}) at the equilibrium point  $(h,q,l,r,w)=(0,1,0,0,0)$, we use the parameterization of $S^3_+$ by $\vec q$, and consider 
instead the system \eqref{systempq}. 

The linearization of \eqref{systempq} around $(h,\vec q, l, r, w)=(0,0,0,0,0)$ reads
\beq\label{lineal system pq}
\left\{\begin{array}{ccl}
h'&=&l, \\[3mm]
2{\vec q}\, '&=&\displaystyle r, \\[3mm]
\left(\begin{array}{c}
l\\
r
\end{array}\right)'&=&\mathcal{J} ^{-1}C w'.
\end{array}\right.\eeq

\begin{proposition}
\label{prop4}
The linearized system \eqref{lineal system pq} with control  $w'\in\R^{m}$ is controllable if, and only if, rank$(C)=6.$
\end{proposition}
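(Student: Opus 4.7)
The plan is to recast the linearized system \eqref{lineal system pq} as a standard linear time-invariant control system and apply Kalman's rank criterion. Set $X:=(h,\vec q,l,r)^\ast \in \R ^{12}$ and treat $u:=w'\in \R ^m$ as the control input. Then \eqref{lineal system pq} reads $X'=AX+Bu$ with
\[
A=\begin{pmatrix} 0 & 0 & I_3 & 0\\ 0 & 0 & 0 & \tfrac12 I_3\\ 0 & 0 & 0 & 0\\ 0 & 0 & 0 & 0 \end{pmatrix},\qquad
B=\begin{pmatrix} 0\\ 0\\ D_1\\ D_2\end{pmatrix},
\]
where I write $\mathcal{J}^{-1}C=:D=\begin{pmatrix} D_1\\ D_2\end{pmatrix}$ with $D_1,D_2\in \R^{3\times m}$.

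The key structural remark is that $A$ is nilpotent of index $2$: the positions $(h,\vec q\,)$ depend only on the velocities $(l,r)$, while the velocities are fed directly by the input, so $A^2=0$. Hence the Kalman controllability matrix collapses to $[\,B\,|\,AB\,]$, and a direct block computation gives
\[
[\,B\,|\,AB\,]=\begin{pmatrix} 0 & D_1\\ 0 & \tfrac12 D_2\\ D_1 & 0\\ D_2 & 0\end{pmatrix}.
\]
After permuting rows and absorbing the harmless scalar $\tfrac12$ into a column, this matrix has the block-antidiagonal form $\begin{pmatrix} 0 & D\\ D & 0\end{pmatrix}$, whose rank is clearly $2\,\mathrm{rank}(D)$.

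Since $\mathcal{J}$ is invertible (as noted just after \eqref{matrix J}), $\mathrm{rank}(D)=\mathrm{rank}(\mathcal{J}^{-1}C)=\mathrm{rank}(C)$. Therefore Kalman's criterion $\mathrm{rank}[\,B\,|\,AB\,]=12$ is equivalent to $\mathrm{rank}(C)=6$, which proves the proposition (and incidentally forces $m\ge 6$). There is no serious obstacle here: the only thing to be careful about is to handle $u=w'$ rather than $w$ as the input, so that $B$ effectively carries $D$ into the velocity block, and the cascade structure delivers the remaining six directions through a single application of $A$.
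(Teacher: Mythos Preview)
Your proof is correct and follows essentially the same approach as the paper: recast \eqref{lineal system pq} as a nilpotent linear system and apply Kalman's rank condition, obtaining $\mathrm{rank}[B\,|\,AB]=2\,\mathrm{rank}(C)$. The only cosmetic difference is that the paper uses the state $(h,2\vec q,l,r)$ so that the $A$ matrix becomes $\left(\begin{smallmatrix}0&Id\\0&0\end{smallmatrix}\right)$ with no stray factor $\tfrac12$; you instead keep $\vec q$ and dispose of the $\tfrac12$ afterwards (note this is really a row scaling rather than ``absorbing into a column'', but of course either elementary operation leaves the rank unchanged).
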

\begin{proof} The proof follows at once from Kalman rank condition, since $(h,2{\vec q} , l,r)\in \R ^{12}$ and 
\[
\textrm{rank } \bigg(
\left(
\begin{array}{c}
0\\ 
\mathcal{J} ^{-1}C
\end{array}
\right) ,
\left(
\begin{array}{cc}
0&Id\\ 
0&0
\end{array}
\right) \,
\left(
\begin{array}{c}
0\\ 
\mathcal{J} ^{-1}C
\end{array}
\right) \bigg) =2\, \textrm{rank} (C).
\]    
\end{proof}
\begin{remark} It is easy to see that the controllability of the linearized system (\ref{lineal system pq}) implies the (local) controllability of the full system
(\ref{systempq}). The main drawback of Proposition \ref{prop4} is that the controllability of the linearized system (\ref{lineal system pq})
 requires at least 6 control inputs ($m\ge 6$).  
\end{remark}
\subsection{Simplications of the model resulting from symmetries}
Now we are concerned with the local  controllability of (\ref{systempq}) with less than 6 controls inputs. To derive tractable geometric conditions, 
we consider rigid bodies with symmetries. Let us introduce the operators $S_{i}(y)=y-2y_{i} \e _{i}$ for $i=1,2,3$, i.e. 
\beq\begin{array}{c}
S_{1}(y)=(-y_1,y_2,y_3),\\
S_{2}(y)=(y_1,-y_2,y_3),\\
S_{3}(y)=(y_1,y_2,-y_3).
\end{array}
\eeq
\begin{definition}
Let $i\in \{ 1,2,3\}$. We say that $\Omega$ is symmetric with respect to the plane $\{y_i = 0\}$ if  $S_i(\Omega)=\Omega$.
Let $f:\Omega\subset\R^3\to \R$. If $f(S_{i}(y))=\varepsilon_{f}^{i} f(y)$ for any $y\in \Omega$ and some number $\varepsilon_{f}^{i}\in\{-1,1\}$, then $f$ is 
said to be even (resp. odd) with respect to $S_i$ if $\varepsilon_f^i=1$ (resp. $\varepsilon_f^i=-1$ ).
\end{definition}

The following proposition gather several useful properties of the symmetries $S_i$, whose proofs are left to the reader.
$\delta _{ip}$ denotes the Kronecker symbol, i.e. $\delta _{ip}=1$ if $i=p$, $\delta _{ip}=0$ otherwise.
\begin{proposition}
\label{prop2}
 Let $i\in\{1,2,3\}$. Then 
\begin{enumerate}
\item $S_{i}S_{i}(a)=a$, $\forall a\in\R^{3}$;
\item $S_{i}(a)\cdot S_{i}(b)=a\cdot b$, $\forall a,b\in\R^{3}$;
\item $S_{i}(a)\times S_{i}(b)=-S_{i}(a\times b)$, $\forall a,b\in\R^{3}$;
\item If $S_{i} (\Omega)=\Omega$, then $\n(S_{i}(y))=S_{i}(\n (y))$, $\forall y \in\partial\Omega$;
\item If $f(S_{i}(y))=\varepsilon f(y)$ with $\varepsilon\in\{\pm1\}$, then $f(S_{i}(y))\n(S_{i}(y))=\varepsilon S_{i}(f(y)\n(y))$,  $\forall y \in\partial\Omega$; 
\item  If $S_{i} (\Omega)=\Omega$, then $S_{i}(y)\times \n (S_{i}(y))=- S_{i}(y\times \n(y))$, $\forall y \in\partial\Omega$;
\item Assume  that $S_{i}(\Omega)=\Omega$, and assume given a function $g:\partial\Omega \to \R$ with 
 $g(S_{i}(y))=\varepsilon g(y)$ for all $y\in \partial \Omega$, where  $\varepsilon \in\{\pm 1\}$. Then the solution $f$ to the system
$$
\left\{
\begin{array}{rll}
\Delta f=0,&&\ \textrm{in } \;\Omega,\\[2mm] 
\D\frac{\partial f}{\partial \n}=g,&&\ \textrm{on } \;\partial\Omega,\\[2mm]
\nabla f(y) \to 0,&&\ \text{as } |y|\to \infty ,
\end{array}
\right.
$$
which is defined up to an additive constant $C$, fulfills for a convenient choice of $C$
\begin{eqnarray*}
f(S_{i}(y)) &=&\varepsilon f(y),\quad \forall y\in \Omega,\\
\nabla f(S_{i}(y)) &=& \varepsilon S_{i}(\nabla f(y)) , \quad \forall y\in \Omega.
\end{eqnarray*}

\item Let $f$  and $g$ be any functions that are even or odd with respect to $S_p$ for some $p\in \{1,2,3\}$,  and let $h(y)= f(y)\partial_{\n} g(y)$. Then 
	\beq
	h(S_{p}(y))=\varepsilon_{f}^{p}\varepsilon_{g}^{p}h(y),
	\eeq
	i.e. $\varepsilon_{f\partial_{\n} g}^{p}=\varepsilon_{f}^{p}\varepsilon_{g}^{p}.$
\item Let $f$  and $g$ be as in \text{\rm (8)}, and let $h_{i}(y)= \partial_{i}f(y)\partial_{\n} g(y)$, where $i\in \{1, 2, 3\}$. Then 
	\beq
	h_{i}(S_{p}(y))=(-1)^{\delta_{ip}}\varepsilon_{f}^{p}\varepsilon_{g}^{p}h_i (y),
	\eeq
	i.e. $\varepsilon_{\partial_{i}f\partial_{\n} g}^{p}=(-1)^{\delta_{ip}}\varepsilon_{f}^{p}\varepsilon_{g}^{p}.$
\item Let $f$  and $g$ be as in \text{ \rm (8)}, and let $h_{i}(y)= (y\times \nabla f(y))_{i}\partial_{\n} g(y)$, where $i\in \{1, 2, 3\}$. Then 
	\beq
	h_{i}(S_{p}(y))=-(-1)^{\delta_{ip}}\varepsilon_{f}^{p}\varepsilon_{g}^{p}h_i (y),
	\eeq
	i.e. $\varepsilon_{(y\times \nabla f)_{i}\partial_{\n} g}^{p}=-(-1)^{\delta_{ip}}\varepsilon_{f}^{p}\varepsilon_{g}^{p}.$
\end{enumerate}
\end{proposition}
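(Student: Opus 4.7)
The plan is to reduce all ten claims to the fact that $S_i$ is the orthogonal reflection of $\R^3$ across $\{y_i=0\}$; in particular $S_i\in O(3)$, $S_i^2=\mathrm{Id}$, $S_i^\ast=S_i$, and $\det S_i=-1$. From these, items \emph{(1)}, \emph{(2)}, \emph{(3)} are immediate, the last using the identity $Q(a)\times Q(b)=(\det Q)\,Q(a\times b)$ valid for every $Q\in O(3)$. For \emph{(4)}, since $S_i(\Omega)=\Omega$, picking any smooth defining function $\vf$ (so that $\Omega=\{\vf<0\}$), the function $\vf\circ S_i$ is again a defining function for $\Omega$; comparing the two outer conormal directions at $S_iy$ and $y$ gives $\n(S_iy)=S_i\n(y)$. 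Items \emph{(5)} and \emph{(6)} are then direct combinations of \emph{(3)}, \emph{(4)} with the parity hypothesis on $f$.

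The only substantive step is \emph{(7)}. I would introduce $\tilde f(y):=\varepsilon\,f(S_i(y))$. Then $\Delta\tilde f=0$ because $S_i$ is an isometry, and, using \emph{(4)},
\[
\p_\n\tilde f(y)=\varepsilon\,\n(y)\cdot S_i\nabla f(S_iy)=\varepsilon\,S_i\n(y)\cdot\nabla f(S_iy)=\varepsilon\,\n(S_iy)\cdot\nabla f(S_iy)=\varepsilon g(S_iy)=g(y).
\]
Hence $\tilde f-f$ is harmonic on $\Omega$, has zero Neumann trace on $\p\Omega$, and its gradient decays at infinity. A Liouville-type uniqueness statement for the homogeneous exterior Neumann problem forces $\tilde f-f\equiv C_0$ for some constant $C_0\in\R$. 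Adjusting the additive constant $C$ appearing in $f$ absorbs $C_0$ and yields $f(S_iy)=\varepsilon f(y)$. Differentiating this identity and using $DS_i=S_i$ gives the gradient identity $\nabla f(S_iy)=\varepsilon\,S_i\nabla f(y)$.

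Items \emph{(8)}--\emph{(10)} are parity bookkeeping under $y\mapsto S_py$ using the identities above. From \emph{(4)} and \emph{(7)}, the boundary function $\p_\n g$ has parity $\varepsilon_g^p$, which gives \emph{(8)}. For \emph{(9)}, the chain rule applied to $f(S_py)=\varepsilon_f^p f(y)$ yields $(\p_if)(S_py)=(-1)^{\delta_{ip}}\varepsilon_f^p\,\p_if(y)$, and multiplication by the parity of $\p_\n g$ produces the claimed sign. For \emph{(10)}, combining \emph{(7)} with \emph{(3)} gives
\[
(S_py)\times\nabla f(S_py)=\varepsilon_f^p\bigl((S_py)\times S_p\nabla f(y)\bigr)=-\varepsilon_f^p\,S_p\bigl(y\times\nabla f(y)\bigr),
\]
so the $i$-th component of $y\times\nabla f$ transforms with sign $-(-1)^{\delta_{ip}}\varepsilon_f^p$; multiplying by $\varepsilon_g^p$ completes the argument.

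The principal (and essentially only) obstacle is the Liouville-type uniqueness statement invoked in \emph{(7)}: a harmonic function on the exterior domain $\Omega$ with vanishing Neumann data and $\nabla f\to 0$ at infinity must be constant. This is classical and, in the present setting, is justified by the decay $|\nabla f|=O(|y|^{-2})$ quoted elsewhere in the paper from \cite{Kikuchi86}, which supplies the integrability needed for the standard energy argument on $\Omega\cap B(0,R)$ letting $R\to\infty$. Once this is granted, the remainder of the proposition reduces to mechanical sign tracking under a single reflection, and no additional ideas are required.
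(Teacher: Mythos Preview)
The paper does not actually prove this proposition: immediately before the statement it says ``The following proposition gathers several useful properties of the symmetries $S_i$, whose proofs are left to the reader.'' Your outline supplies exactly the missing details in the expected way---the linear-algebra facts about the reflection $S_i\in O(3)$ for \emph{(1)}--\emph{(3)}, the defining-function argument for \emph{(4)}, the symmetrized candidate $\tilde f(y)=\varepsilon f(S_iy)$ together with uniqueness for the exterior Neumann problem for \emph{(7)}, and straightforward parity bookkeeping for the rest---so there is nothing to compare and your argument is correct.

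One small refinement worth recording: in \emph{(7)} your sentence ``adjusting the additive constant $C$ \ldots\ absorbs $C_0$'' is only needed when $\varepsilon=-1$ (where $C=C_0/2$ works). When $\varepsilon=+1$, the constant $C_0$ is forced to vanish automatically: evaluating $\tilde f-f=C_0$ at $S_iy$ gives $f(y)-f(S_iy)=C_0$, i.e.\ $-C_0=C_0$. So no adjustment is required in that case.
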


Applying Proposition \ref{prop2} to the solutions  $\phi_i,\varphi_i$, $i=1,2,3$,  of  \eqref{eq for phi}-\eqref{eq for phi:02}, we obtain at once the following result.
\begin{corollary} 
Assume that $\Omega$ is symmetric with respect to the plane $\{y_p=0\}$ (i.e. $S_p(\Omega )=\Omega$) for some $p\in\{1,2,3\}$. Then for any $j\in \{1,2,3\}$ 
	\ba
	\phi_{j}(S_{p}(y))&=&\left\{
	\begin{array}{lc}
		\phi_{j}(y) & \text{ if } \ j\neq p, \\
		-\phi_{j}(y) & \text{ if } \ j= p, 
	\end{array}\right. \\
	&=& (-1)^{\delta_{pj}}\phi_{j}(y),
	\ea
	i.e. $\varepsilon_{\phi_{j}}^{p}=(-1)^{\delta_{pj}},$ and

	\ba
	\varphi_{j}(S_{p}(y)) &=& \left\{
	\begin{array}{lc}
		-\varphi_{j}(y) & \text{ if }\  j\neq p, \\
		\varphi_{j}(y) & \text{ if } \ j= p, 
	\end{array}\right. \\
	&=& -(-1)^{\delta_{pj}}\varphi_{j}(y),
	\ea
	i.e. $\varepsilon_{\varphi_{j}}^{p}=-(-1)^{\delta_{pj}}.$
\end{corollary}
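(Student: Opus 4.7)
The strategy is to simply apply item (7) of Proposition \ref{prop2} to the Neumann problems defining $\phi_j$ and $\varphi_j$ in \eqref{eq for phi}--\eqref{eq for phi:02}, so everything reduces to checking how the respective Neumann data transform under $S_p$. Since $\Omega$ is smooth and $S_p(\Omega )=\Omega$, the boundary $\partial \Omega$ is also $S_p$-invariant, and both boundary data are smooth, so Proposition \ref{prop2}(7) applies on the nose.

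For $\phi_j$, the Neumann datum is $g(y)=\nu_j(y)$. By Proposition \ref{prop2}(4), $\nu(S_p(y))=S_p(\nu(y))$, and reading off the $j$-th component of $S_p$ (which flips only the $p$-th coordinate) yields
\[
\nu_j(S_p(y))=(-1)^{\delta_{pj}}\nu_j(y),
\]
so $g$ is even with respect to $S_p$ when $j\neq p$ and odd when $j=p$. Item (7) of Proposition \ref{prop2}, applied with $\varepsilon=(-1)^{\delta_{pj}}$, gives the desired identity $\phi_j(S_p(y))=(-1)^{\delta_{pj}}\phi_j(y)$ for a suitable choice of the additive constant.

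For $\varphi_j$, the Neumann datum is $g(y)=(y\times \nu(y))_j$. Here Proposition \ref{prop2}(6) gives
\[
S_p(y)\times \nu(S_p(y)) = -S_p(y\times \nu(y)),
\]
and taking the $j$-th component (again $S_p$ flips only the $p$-th coordinate) produces
\[
(S_p(y)\times \nu(S_p(y)))_j = -(-1)^{\delta_{pj}}(y\times \nu(y))_j,
\]
so $g$ has parity $\varepsilon=-(-1)^{\delta_{pj}}$ under $S_p$. A second application of Proposition \ref{prop2}(7) then yields $\varphi_j(S_p(y))=-(-1)^{\delta_{pj}}\varphi_j(y)$.

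There is essentially no obstacle: the only mild point to watch is the additive constant in the Neumann problem, but this is already handled by the phrasing of Proposition \ref{prop2}(7), which guarantees that one may choose the constants in $\phi_j$ and $\varphi_j$ (themselves only defined up to a constant by \eqref{eq for phi}--\eqref{eq for phi:02}) so that the stated parity relations hold pointwise. No estimates or PDE arguments beyond invoking Proposition \ref{prop2} are needed.
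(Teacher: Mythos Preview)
Your proof is correct and follows exactly the approach the paper indicates: the paper simply states that the corollary is obtained ``at once'' by applying Proposition~\ref{prop2} to the solutions $\phi_i,\varphi_i$ of \eqref{eq for phi}--\eqref{eq for phi:02}, and you have spelled out precisely how items (4), (6), and (7) of that proposition combine to give the result.
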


The following result shows how to exploit the symmetries of the rigid body and of the control inputs to simplify the matrices in \eqref{matrix 1}-\eqref{matrix 6}   
\begin{proposition} 
\label{prop3}
Assume that $\Omega$ is symmetric with respect to the plane $\{ y_p=0\}$ for some $p\in\{1,2,3\}$. Then 
\begin{enumerate}
\item $M_{ij}=0$ if $\varepsilon_{\phi_{i}}^{p}\varepsilon_{\phi_{j}}^{p}=-1$, i.e.
\beq\label{cond. ML}
\delta_{ip}+\delta_{jp} \equiv 1 \quad \textrm{(mod 2) };
\eeq
\item $J_{ij}=0$ if $\varepsilon_{\varphi_{i}}^{p}\varepsilon_{\varphi_{j}}^{p}=-1$, i.e.
\beq\label{cond. JR}
 \delta_{ip}+\delta_{jp}\equiv 1 \quad \textrm{(mod 2)};
\eeq
\item $N_{ij}=0$ if $ \varepsilon_{\phi_{i}}^{p}\varepsilon_{\varphi_{j}}^{p}=-1$, i.e.
\beq\label{cond. D}
 \delta_{ip}+\delta_{jp}\equiv 0 \quad \textrm{(mod 2)};
\eeq
\item $(C^{M})_{ij}=0$ if $\varepsilon_{\phi_{i}}^{p}\varepsilon_{\chi_{j}}^{p}=-1$, i.e.
\beq\label{cond. CM}
 (-1)^{\delta_{ip}}=-\varepsilon_{\chi_{j}}^{p};
\eeq
\item $(C^{J})_{ij}=0$ if $\varepsilon_{\varphi_{i}}^{p}\varepsilon_{\chi_{j}}^{p}=-1$, i.e.
\beq\label{cond. CJ}
 (-1)^{\delta_{ip}}=\varepsilon_{\chi_{j}}^{p};
\eeq
\item $(L^M_{q})_{ij}=0$ if $(-1)^{\delta_{ip}}\varepsilon_{\phi_{j}}^{p}\varepsilon_{\chi_{q}}^{p}=-1$, i.e.
\beq\label{cond. MLq}
 (-1)^{\delta_{ip}+\delta_{jp}}=-\varepsilon_{\chi_{q}}^{p};
\eeq
\item $(R^M_{q})_{ij}=0$ if  $(-1)^{\delta_{ip}}\varepsilon_{\varphi_{j}}^{p}\varepsilon_{\chi_{q}}^{p}=-1$, i.e. 
\beq\label{cond. MRq}
(-1)^{\delta_{ip}+\delta_{jp}}=\varepsilon_{\chi_{q}}^{p};
\eeq
\item $(W^M_{q})_{ij}=0$ if $ (-1)^{\delta_{ip}}\varepsilon_{\varphi_{j}}^{p}\varepsilon_{\chi_{q}}^{p}=-1$, i.e.
\beq\label{cond. MWq}
(-1)^{\delta_{ip}}=-\varepsilon_{\chi_{j}}^{p}\varepsilon_{\chi_{q}}^{p};
\eeq
\item $(L^J_{q})_{ij}=0$ if  $ -(-1)^{\delta_{ip}}\varepsilon_{\phi_{j}}^{p}\varepsilon_{\chi_{q}}^{p}=-1$, i.e.
\beq\label{cond. JLq}
(-1)^{\delta_{ip}+\delta_{jp}}=\varepsilon_{\chi_{q}}^{p};
\eeq
\item $(R^J_{q})_{ij}=0$ if  $-(-1)^{\delta_{ip}}\varepsilon_{\varphi_{j}}^{p}\varepsilon_{\chi_{q}}^{p}=-1$, i.e.
\beq\label{cond. JRq}
(-1)^{\delta_{ip}+\delta_{jp}}=-\varepsilon_{\chi_{q}}^{p};
\eeq
\item $(W^J_{q})_{ij}=0$ if 
\beq\label{cond. JWq}
(-1)^{\delta_{ip}}=\varepsilon_{\chi_{j}}^{p}\varepsilon_{\chi_{q}}^{p}, 
\eeq
\end{enumerate}
where the matrices   $M,J,N,C^{M},C^{J},L^M_{q},R^M_{q} ,  W^M_{q} , L^J_{q},R^J_{q} $ and $W^J_{q}$
 are defined in \eqref{matrix 1}-\eqref{matrix 6}.
\end{proposition}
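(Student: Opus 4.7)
The plan is to observe that each of the eleven matrix entries is an integral over $\partial\Omega$ of a scalar function $h$ which, under the hypothesis $S_p(\Omega)=\Omega$, satisfies $h\circ S_p=\varepsilon_h^p\, h$ for some sign $\varepsilon_h^p\in\{\pm 1\}$. Since $S_p$ restricts to a measure-preserving involution of $\partial\Omega$, the change of variables $y\mapsto S_p(y)$ yields
\[
\int_{\partial\Omega} h(y)\,d\sigma(y)\;=\;\int_{\partial\Omega} h(S_p(y))\,d\sigma(y)\;=\;\varepsilon_h^p\int_{\partial\Omega} h(y)\,d\sigma(y),
\]
so that $\varepsilon_h^p=-1$ forces the integral to vanish. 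The whole proof thus reduces to a bookkeeping exercise: compute $\varepsilon_h^p$ for each entry and read off when it equals $-1$.

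First I would collect the needed parities. The Corollary preceding the statement gives $\varepsilon_{\phi_j}^p=(-1)^{\delta_{pj}}$ and $\varepsilon_{\varphi_j}^p=-(-1)^{\delta_{pj}}$, while item (7) of Proposition \ref{prop2} applied with boundary datum $\chi_j$ gives $\varepsilon_{\psi_j}^p=\varepsilon_{\chi_j}^p$. Second, I would group the eleven entries by their integrand type: (a) entries of the form $\int_{\partial\Omega} f\,(\partial_\nu g)$, which cover $M_{ij},\,J_{ij},\,N_{ij},\,(C^M)_{ij},\,(C^J)_{ij}$ (using in each case the Neumann identities from \eqref{eq for phi:01}); (b) entries of the form $\int_{\partial\Omega}(\nabla f)_i\,\chi_q=\int_{\partial\Omega}(\partial_i f)(\partial_\nu\psi_q)$, which cover $(L^M_q)_{ij},\,(R^M_q)_{ij},\,(W^M_q)_{ij}$; (c) entries of the form $\int_{\partial\Omega}(y\times\nabla f)_i\,\chi_q=\int_{\partial\Omega}(y\times\nabla f)_i\,(\partial_\nu\psi_q)$, which cover $(L^J_q)_{ij},\,(R^J_q)_{ij},\,(W^J_q)_{ij}$.

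Then I would invoke items (8), (9), (10) of Proposition \ref{prop2} to read off the parities: for type (a), $\varepsilon_h^p=\varepsilon_f^p\,\varepsilon_g^p$; for type (b), $\varepsilon_h^p=(-1)^{\delta_{ip}}\varepsilon_f^p\,\varepsilon_g^p$; and for type (c), $\varepsilon_h^p=-(-1)^{\delta_{ip}}\varepsilon_f^p\,\varepsilon_g^p$. Substituting the parities of $\phi_j,\varphi_j,\psi_j$ in each of the eleven cases and setting $\varepsilon_h^p=-1$ reproduces exactly the conditions \eqref{cond. ML}--\eqref{cond. JWq}. For instance, $M_{ij}=\int_{\partial\Omega}\phi_j\,\partial_\nu\phi_i$ gives $\varepsilon_h^p=(-1)^{\delta_{ip}+\delta_{jp}}$, vanishing iff $\delta_{ip}+\delta_{jp}\equiv 1\pmod 2$; while $(L^M_q)_{ij}$ gives $\varepsilon_h^p=(-1)^{\delta_{ip}+\delta_{jp}}\varepsilon_{\chi_q}^p$, vanishing iff $(-1)^{\delta_{ip}+\delta_{jp}}=-\varepsilon_{\chi_q}^p$.

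There is no real mathematical obstacle; the only risk is sign-accounting. The main point to handle carefully is to rewrite each matrix entry in one of the three canonical forms before invoking the appropriate item of Proposition \ref{prop2}, in particular using $\chi_q=\partial_\nu\psi_q$ on $\partial\Omega$ so that types (b) and (c) fit the statements of (9) and (10). Once this translation is made, the eleven cases are parallel instances of the same symmetric-integration argument.
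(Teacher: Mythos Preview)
Your proposal is correct and is precisely the argument the paper has in mind: the paper states Proposition~\ref{prop3} without proof, having set up items (8)--(10) of Proposition~\ref{prop2} exactly so that each matrix entry can be recognized as $\int_{\partial\Omega} h$ with $h\circ S_p=\varepsilon_h^p\,h$, and the vanishing for $\varepsilon_h^p=-1$ then follows from the change of variables $y\mapsto S_p(y)$. Your identification $\varepsilon_{\psi_j}^p=\varepsilon_{\chi_j}^p$ via item (7) and the rewriting $\chi_q=\partial_\nu\psi_q$ to bring the $L,R,W$ entries into the form required by items (9) and (10) are exactly the intended steps.
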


From now on, we assume  that $\Omega$ is invariant under the operators $S_2$ and $S_3$, i.e.
\beq\label{cond. symmetric domain} 
S_{p}(\Omega)=\Omega,\;\;\forall p\in\{2,3\},
\eeq 
and that $\varepsilon_{\chi_{1}}^{p}= 1$, i.e. 
\beq\label{cond. symmetric first control}
\chi_{1}(S_{p}(y))=\chi_{1}(y)\quad \forall y\in\partial\Omega, \forall  p\in \{ 2,3\} .\eeq 
In other words,  the set $\S$  and the control $\chi _1$ are  symmetric with respect to the two planes $\{ y_{2} =0 \}$ and $\{ y_{3}=0\}$. 
As a consequence, several coefficients in the matrices in \eqref{matrix 1}-\eqref{matrix 6} vanish.

More precisely, using \eqref{cond. symmetric domain}-\eqref{cond. symmetric first control} and Proposition \ref{prop3}, we see immediately 
that the matrices in \eqref{kirchhoff system} can be written
\beq
\label{E1}
M=\left(\begin{array}{ccc}
M_{11} & 0& 0\\
0 &M_{22} & 0 \\
0 &0 &M_{33}  
\end{array}\right),\;\;\;
J=\left(\begin{array}{ccc}
J_{11} & 0& 0\\
0 &J_{22} & 0 \\
0 &0 &J_{33}  
\end{array}\right) ,
\eeq

\beq
\label{E2}
N=\left(\begin{array}{ccc}
0 & 0& 0\\
0 &0 & N_{23} \\
0 &N_{32} &0  
\end{array}\right),
\eeq

\beq
\label{E3}
C^{M} e_{1}=\left(\begin{array}{c}
(C^{M})_{11} \\
0  \\
0   
\end{array}\right),\;\;\;
C^{J} e_{1}=\left(\begin{array}{c}
 0\\
0 \\
0   
\end{array}\right),
\eeq 

 \beq
L^M_{1} =\left(\begin{array}{ccc}
(L^M_{1})_{11} & 0& 0\\
0 &(L^M_{1})_{22} & 0 \\
0 &0 &(L^M_{1})_{33}  
\end{array}\right),\;\;\;
R^M_{1} =\left(\begin{array}{ccc}
0 & 0& 0\\
0 &0 & (R^M_{1})_{23} \\
0 &(R^M_{1})_{32} &0  
\end{array}\right)
\eeq

\beq
(W^M_{1}) e_1=\left(\begin{array}{c}
 (W^M_{1})_{11}\\
 0 \\
0  
\end{array}\right),\;\;\;
L^J_{1} =\left(\begin{array}{ccc}
0 & 0& 0\\
0 &0 & (L^J_{1})_{23} \\
0 &(L^J_{1})_{32} &0  
\end{array}\right),
\eeq
and
\beq
R^J_{1} =\left(\begin{array}{ccc}
(R^J_{1})_{11} & 0& 0\\
0 &(R^J_{1})_{22} & 0 \\
0 &0 & (R^J_{1})_{33} 
\end{array}\right),\;\;\;
(W^J_{1}) e_1=\left(\begin{array}{c}
 0\\
 0 \\
0  
\end{array}\right).
\eeq

\subsection{Toy problem}
Before investigating the full system \eqref{systempq}, it is very important to look at the simplest situation for which 
$h_i=l_i=0$ for $i=2,3$, $\vec q =0$, $r=0$, and $w_j=0$ for $j=2,..,m$.
\begin{lemma} 
\label{lemma simple} Assume that
 \eqref{cond. symmetric domain}-\eqref{cond. symmetric first control} hold, and
assume given some functions $h_{1},l_{1},w_{1}\in C^1([0,T])$ satisfying
\beq\label{system simple}
\left\{\begin{array}{rcl}
h'_{1}&=&l_{1}\\ \\
l'_{1}&=&\D\alpha w'_{1}+\beta l_{1}w_{1}+\gamma (w_{1})^{2},
\end{array}\right.
\eeq
where 
$$\alpha:= \frac{-(C^M)_{11}}{m_0+M_{11}},\;\;\; \beta:=\frac{-(L^{M}_1)_{11}}{m_0+M_{11}},\;\;\textrm{ and }\gamma:=\frac{-(W^M_{1})_{11}}{m_0+M_{11}} \cdot$$  
Let $h:=(h_{1},0,0)$, $\vec q :=(0,0,0)$, $l:=(l_{1},0,0)$,  $r:=(0,0,0)$, and $w:=(w_{1},0,...,0)$.
Then $(h,\vec q,l,r,w)$  solves \eqref{systempq}.
 \end{lemma}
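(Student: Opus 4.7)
The proof is essentially a direct substitution: plug the ansatz into \eqref{systempq} and observe that the symmetry structure encoded in \eqref{E1}--\eqref{E3} forces all but the first scalar equation to reduce to $0=0$. First I would handle the position and attitude equations. Since $\vec q\equiv 0$ and $r\equiv 0$, the first equation of \eqref{systempq} collapses to $h' = l$, which reproduces the scalar identity $h_1'=l_1$ on the first coordinate and the trivial identity $0=0$ on the other two; the second equation becomes the identity $0=0$.

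The work is concentrated in the velocity block. I would begin by computing $\mathcal{J}\bigl(\begin{smallmatrix}l\\r\end{smallmatrix}\bigr)$ using the block-diagonal/antidiagonal form of $M, J, N$ given in \eqref{E1}--\eqref{E2}: with $l=l_1 \e_1$ and $r=0$, only the first coordinate survives and it equals $(m_0+M_{11})l_1$. Formula \eqref{E3} then gives $Cw'=-(C^{M})_{11}w_1'\,\e_1$ (embedded in $\R^6$), since $(C^M) e_1$ has only its first entry nonzero and $(C^J) e_1=0$.

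Next I would expand $F(l,r,w)$ from \eqref{non lineal function}. With $r=0$ the $S(r)$ blocks vanish, and $S(l)(a\e_1)=l\times (a\e_1)=0$ because $l\parallel \e_1$, so the entire first summand of $F$ is zero. In the second summand only the index $p=1$ contributes (the remaining $w_p$ vanish), and every term proportional to $r$ disappears. Reading the reduced forms right after \eqref{E3}, the first columns of $L^M_{1}$ and $W^M_{1}$ have only their $(1,1)$-entry nonzero, while the first columns of $L^J_{1}$ and $W^J_{1}$ vanish entirely. Consequently the lower three components of $F$ are zero, and the upper block collapses to $-w_1\bigl((L^M_1)_{11}l_1+(W^M_1)_{11}w_1\bigr)\e_1$. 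Equating first coordinates in the velocity equation yields
\[
(m_0+M_{11})\,l_1' \;=\; -(C^M)_{11}\,w_1' \;-\; (L^M_1)_{11}\,l_1 w_1 \;-\; (W^M_1)_{11}\,w_1^2,
\]
which is exactly the second line of \eqref{system simple} after dividing by $m_0+M_{11}$, with $\alpha,\beta,\gamma$ as defined in the statement.

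The whole argument is pure bookkeeping; the only real obstacle is to verify carefully that every coupling that would drag the dynamics out of the $\e_1$-axis is killed by the symmetry hypotheses \eqref{cond. symmetric domain}--\eqref{cond. symmetric first control}. This has, however, already been processed in Proposition \ref{prop3} and recorded in the explicit sparse forms \eqref{E1}--\eqref{E3}, so no further analysis beyond the substitution is needed.
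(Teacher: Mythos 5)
Your proposal is correct and follows essentially the same route as the paper: direct substitution of the ansatz into \eqref{systempq}, using $r=0$, $l\parallel \e_1$ (so the $S(r)$ and $S(l)$ contributions vanish) and the sparse forms \eqref{E1}--\eqref{E3} of the matrices to reduce the velocity block to the single scalar equation in \eqref{system simple}. No gaps.
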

 
\begin{proof}
  Let us set $h=h_1e_1$, $\vec q =0$, $l=l_{1}e_{1}$, $r=0$ and  $w=(w_{1},0,...,0)$, where $(h_1,l_1,w_1)$ 
 fulfills  \eqref{system simple}. From  \eqref{E1}-\eqref{E3},
   we have that
 \beq 
 \mathcal{J}\left(\begin{array}{c} l\\r\end{array}\right) 
 =l_{1}\mathcal{J} e_1=l_{1}\left(\begin{array}{c}m_0 +M_{11}\\0\\0\\0\\0\\0\end{array}\right),
 \eeq
and 
\beq  Cw
=w_{1}C e_1=-w_{1}\left(\begin{array}{c}(C^{M})_{11}\\0\\0\\0\\0\\0\end{array}\right).
\eeq
This yields
\beq  
\left(\begin{array}{cc} S(r)  & 0\\ \\  S( l )   &  S(r)  \end{array}\right)
\left(\mathcal{J}\big(\begin{array}{c} l\\r\end{array}\big) -Cw\right) =0.
\eeq
Replacing in (\ref{non lineal function}), we obtain   
\beq\begin{array}{rcl}
 F(l,r,w)&=&  -\sum\limits_{p=1}^m w_{p}\left(\begin{array}{c} \D  L^M_{p}l+R^M_{p}r+W^M_{p}w \\ \\
\D  L^J_{p}l+R^J_{p}r+W^J_{p}w 
\end{array}\right)\\ \\
&=&  -w_{1}\left(\begin{array}{c} \D  L^M_{1}l+W^M_{1}w \\ \\
\D  L^J_{1}l+W^J_{1}w 
\end{array}\right)\\ \\
&=&  -w_{1}\bigg( \D  l_{1}\left(\begin{array}{c}
(L^M_{1})_{11} \\
0  \\
0  \\
0\\
0\\
0 
\end{array}\right) +w_{1}\left(\begin{array}{c}
 (W^M_{1})_{11}\\
 0 \\
0  \\
0\\
0\\
0
\end{array}\right)  
\bigg) .
\end{array}
\eeq
We conclude that $(h,\vec q,l,r,w)$ is a solution of  (\ref{systempq}).
 \end{proof}
\begin{remark}
If $\gamma + \alpha\beta=0$, then it follows from \cite[Lemma 2.3]{GR} that for any $T>0$ we may associate with any pair $(h_1^0,h_1^T)$ in $\R ^2$
a control input $w_1\in C_0^\infty (0,T)$ such that the solution $(h_1(t),l_1(t))$ of \eqref{system simple} emanating from 
$(h_1^0,0)$ at $t=0$ reaches $(h_1^T,0)$ at $t=T$. 
\end{remark}
\subsection{Return method}
The main result in this section (see below Theorem \ref{thm1}) is derived in following a strategy developed in \cite {GR} and inspired in part from
Coron's return method.  We first construct a (non trivial) loop-shaped trajectory of the control system \eqref{systempq}, which is based on the computations
performed in Lemma \ref{lemma simple}. (For this simple control system, we can require that $w_1(0)=0$, but we cannot in general require that
$w_1(T)=0$.)
Next, we compute the linearized system along the above reference trajectory. We use a controllability test
from \cite{GR} to investigate the controllability of the linearized system, in which the control appears with its time derivative. Finally, we derive
the (local) controllability of the nonlinear system by a standard linearization argument. 
 
\subsubsection{Construction of a loop-shaped trajectory.}
The construction differs slightly from those in \cite{GR}: indeed, to simplify the computations, we impose here that all the derivatives of $\overline{l}_1$ of order
larger than two vanish at $t=T$. 
 For given $T>0$, let  $\xi\in C^\infty (\R; [0,1])$ be a function such that 
 \[
 \xi (t) = \left\{ 
 \begin{array} {ll}
 0 & \text{ if } \ \displaystyle  t <\frac{T}{3}, \\[3mm]
 1 &\text{ if } \ \displaystyle t>\frac{2T}{3}.
 \end{array}
 \right.
 \]
 Pick any $\lambda_0 > 0$ and let $\lambda \in [-\lambda_0, \lambda_0]$ with $\lambda \ne 0$. 
Set
\be
\overline{h}_1(t) = \lambda \xi (t) (t-T)^2,\;\;\;  \overline{l}_1(t) = \overline{h}_1'(t),\qquad t\in\R.
\label{P1000}
\ee
Note that 
\ba
&&\overline{h}_1(0)=\overline{h}_1(T)=\overline{l}_1(0)=\overline{l}_1(T)=0, \label{P2}\\
&& \overline{l}_1'(T) = 2\lambda \ne 0, \quad \overline{l}_1^{(k)} (T)=0\ \text{ for } k\ge 2. \label{P3}
\ea
%and hence
%\be
%\hat A ^{(k)} (T) = \hat B ^{(k)} (T)  =\hat D ^{(k)} (T) = 0 \quad \text{ for } k\in \N \setminus \{ 1\}. \label{P4} 
%\ee
Next, define $\overline{w}_1$ as the solution to the Cauchy problem
\begin{eqnarray}
\D\dot{ \overline{w}}_1&=&\D\alpha^{-1}(\dot{\overline{l}}_1-\beta\overline{l}_1\overline{w}_1-\gamma\overline{w}_1^2), \label{X1}\\ 
\overline{w}_1(0)&=&0.\label{X2}
\end{eqnarray}
By a classical result on the continuous dependence of solutions of ODE's with respect to a parameter, we have that the 
solution $\overline{w}_1$ of \eqref{X1}-\eqref{X2} is defined on $[0, T]$ provided that $\lambda_0$ is small enough. 
Set  $\overline{h}=(\overline{h}_{1},0,0)$, $\overline{\vec q}=(0,0,0)$, $\overline{w}=(\overline{w}_{1},0,...,0)$, $\overline{l}=(\overline{l}_{1},0,0)$ and  
$\overline{r}=(0,0,0)$. According to Lemma  \ref{lemma simple},  $(\overline{h},\overline{\vec q},\overline{l},\overline{r},\overline{w})$ is a solution 
of (\ref{systempq}), which satisfies  
$$(\overline{h},\overline{\vec q},\overline{l},\overline{r})(0) = 0 = (\overline{h},\overline{\vec q},\overline{l},\overline{r})(T).$$

\subsubsection{Linearization along the reference trajectory}
Writing  
\beq\begin{array}{ccc}
h&=&\overline{h}+ {\hat h},\\ 
\vec q&=&\overline{\vec q}+ \widehat{\vec q},\\ 
l&=&\overline{l}+ {\hat l},\\ 
r&=&\overline{r}+ {\hat r},
\end{array}
\eeq
expanding in (\ref{systempq}) in keeping only the first order terms in $\hat h,\widehat{\vec q},\hat l$ and
$\hat r$, we obtain %with Lemma \ref{lemma Q}  
the following linear system
\beq\label{second lineal system}
\left\{\begin{array}{ccl}
{\hat h}'&=& {\hat l } + 2\widehat{\vec q}  \times \overline{l} , \\[2mm]
{\widehat{\vec q}\,} '&=&\frac{1}{2} \hat{r}, \\[2mm]
\left(\begin{array}{c}
\hat{l}\\
\hat{r}
\end{array}\right)'&=&\mathcal{J} ^{-1}\left( 
A(t) \left( \begin{array}{c} \hat l \\ \hat r \end{array}  \right) 
+ B(t) \hat w +  
C \hat{w}' \right),
\end{array}\right.\eeq
where the matrices  $A(t) \in \R ^{6\times 6}$ and  $B(t)\in \R ^{6\times m}$ are defined as 
\begin{eqnarray}
A(t) &=& \left( \frac{\partial F}{\partial l}(\overline{l}(t),\overline{r}(t),\overline{w}(t)) \quad  \big\vert\quad  
 \frac{\partial F}{\partial r}(\overline{l}(t),\overline{r}(t),\overline{w}(t)) \right) ,\\ 
B(t)&=&\D\frac{\partial F}{\partial w} (\overline{l}(t),\overline{r}(t),\overline{w}(t)).
\end{eqnarray}
Setting 
\beq
{\hat p} = 2\widehat{\vec q},
\eeq
we can rewrite \eqref{second lineal system} as
\beq\label{third lineal system}
\left\{\begin{array}{ccl}
{\hat h}'&=& {\hat l}-\overline{l} \times \hat{p},   \\[2mm]
{\hat p}'&=&\hat{r}, \\[2mm]
\left(\begin{array}{c}
\hat{l}\\
\hat{r}
\end{array}\right)'&=&\mathcal{J} ^{-1}\left( 
A(t) \left( \begin{array}{c} \hat l \\ \hat r \end{array}  \right) 
+ B(t) \hat w +  
C \hat{w}' \right) .
\end{array}\right.\eeq
Obviously, \eqref{second lineal system} is controllable on  $[0,T]$ if, and only if, \eqref{third lineal system} is.
Letting 
\[
 z =\left( \begin{array}{c} \hat h\\ \hat p \end{array} \right), \quad 
k =\left( \begin{array}{c} \hat l\\ \hat r \end{array} \right),\quad  f=\hat w,
\]
we obtain the following control system
\ba
\left( \begin{array}{c} \dot z \\ \dot k \end{array} \right)
&=& 
\left( 
\begin{array}{cc}D(t) & Id \\ 0 & {\mathcal J}^{-1} A(t) \end{array}
\right) 
\left( \begin{array}{c} z \\ k  \end{array} \right)
 + 
 \left( \begin{array}{c} 0 \\ {\mathcal J }^{-1} B(t)  \end{array} \right) f
 + 
 \left( \begin{array}{c} 0 \\ {\mathcal J }^{-1} C  \end{array} \right) \dot f \nonumber \\
 &=:& {\mathcal A} (t)  \left( \begin{array}{c} z \\ k  \end{array} \right)
 +{\mathcal B} (t)  f + {\mathcal C } \dot f.
\label{B251}
\ea

We find that 
\[ D = \left( \begin{array}{cc} 0 & -S(\overline{l})\\0&0 \end{array} \right),\quad \text{ with } 
S(\overline{l})= \left( \begin{array}{ccc} 0&0&0\\ 0&0& -\overline{l}_1 \\ 0 & \overline{l}_1 &0 \end{array} \right) ,\] 
\begin{multline*}
B = \left(\begin{array}{cc} 0&0\\[2mm] S(\overline{l}) &0  \end{array} \right) C 
-\overline{w}_1 \left( \begin{array}{c} W_1^M\\[2mm] W_1^J \end{array} \right)  
- \overline{l}_1 \left( \begin{array}{c|c|c|c} 
L_1^M e_1 & L_2^M e_1 &\cdots   & L_m^M e_1  \\[2mm] 
L_1^J  e_1 & L_2^J  e_1  &\cdots & L_m^J e_1
 \end{array} \right)\, \\
 - \overline{w}_1 \left( \begin{array}{c|c|c|c} 
W_1^M e_1 & W_2^M e_1 &\cdots   & W_m^M e_1  \\[2mm] 
W_1^J  e_1 & W_2^J  e_1  &\cdots & W_m^J e_1
 \end{array} \right)\, ,
\end{multline*} 
and that 
\[
A = 
\left( 
\begin{array}{cccccc}
-(L_1^M)_{11}\overline{w}_1 & 0 & 0 & 0 & 0 & 0\\
 0 & -(L_1^M)_{22}\overline{w}_1 & 0 & 0 & 0 & A_{26}\\
 0 & 0 & -(L_1^M)_{33}\overline{w}_1 & 0 & A_{35} & 0 \\
 0 & 0 & 0 &   -(R_1^J)_{11}\overline{w}_1 & 0 & 0 \\
 0 & 0 & A _{53}  & 0 & N_{32}\overline{l} _1 -(R_1^J)_{22}\overline{w}_1 & 0\\
 0 & A_{62} & 0 & 0 & 0 & -N_{23} \overline{l} _1   -(R_1^J)_{33}\overline{w}_1
\end{array}
\right) 
\]
with 
\begin{eqnarray*}
A_{26} &=& -(m_0+M_{11}) \overline{l_1} -\big( (C^M)_{11} +(R^M_1)_{23}\big) \overline{w}_1, \\
A_{35} &=&  (m_0+M_{11}) \overline{l_1}  + \big( (C^M)_{11} -(R^M_1)_{32}\big) \overline{w}_1, \\
A_{53} &=&  (M_{33} - M_{11}) \overline{l_1} -\big( (C^M)_{11} +(L^J_1)_{23}\big) \overline{w}_1, \\
A_{62} &=&  (M_{11} - M_{22}) \overline{l_1}  + \big( (C^M)_{11}  - (L^J_1)_{32}\big) \overline{w}_1.
\end{eqnarray*}

From now on, we suppose in addition to \eqref{cond. symmetric domain}-\eqref{cond. symmetric first control}
that $\chi_1$ is chosen so that
\beq 
\label{cond. alpha}
\alpha\neq 0.
\eeq

\subsubsection{Linear control systems with one derivative in the control} 
Let us consider any linear control system of the form
\be
\label{XX1}
\dot x = {\mathcal A } (t) x + {\mathcal B } (t) u + C \dot u
\ee
where $x\in \R ^n$ is the state ($n\ge 1$), $u\in \R ^m$ is the control input ($m\ge 1$), ${\mathcal A} \in C^\infty ([0,T];\R ^{n\times n })$,
${\mathcal B}\in C^\infty ([0,T]; \R ^{n\times m})$, and ${\mathcal C} \in \R ^{n\times m}$.   
Define a sequence of matrices ${\mathcal M_i}(t)\in \R ^{n\times m}$ by 
\be
\label{N1}
{\mathcal M}_0(t)={\mathcal B}(t) + {\mathcal A } (t){\mathcal C},
\quad \text{ and }\  {\mathcal M}_i(t) = \dot {\mathcal M}_{i-1} (t) - {\mathcal A}(t)  {\mathcal M} _{i-1} (t), \qquad \forall i\ge 1, \ \forall t\in [0,T].  
\ee
Introduce the reachable set
\begin{multline*}
{\mathcal R }_{u(0)=0} =\{ x_T\in \R ^n; \exists u\in H^1(0,T;\R ^m)\ \text { with }\   u(0)=0 \text { such that  }\\
x_T=x(T), \ \text{ where }\  x(\cdot )\  \text{ solves  \eqref{XX1}  } \text{ and } x(0) =0\}.   
\end{multline*} 
Then the following result holds.
\begin{proposition}
\label{prop1000}
\cite[Propositions 2.4 and 2.5]{GR} 
Let $\varepsilon >0$, ${\mathcal A} \in C^{\omega} ((-\varepsilon , T + \varepsilon ) ; \R ^{n\times n } )$ and 
${\mathcal B} \in C^{\omega} ((-\varepsilon , T + \varepsilon ) ; \R ^{n\times m } )$, and let $({\mathcal M}_i)_{i\ge 0}$
be the sequence defined in \eqref{N1}. Then for all $t_0\in [0,T]$, we have  that
\be
\label{reachable}
{\mathcal R}_{u(0)=0} = {\mathcal C } \R ^m + \text{Span} \{ \phi (T,t_0)\,  {\mathcal M}_i (t_0) u; \ u\in \R ^m, i\ge 0 \},
\ee 
where $\phi$ denotes the fundamental solution associated with the system $\dot x={\mathcal A} (t)x$. 
\end{proposition}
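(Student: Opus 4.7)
The plan is to reduce the system to one without a derivative in the control by the substitution $y := x - \mathcal{C} u$. Since $u(0)=0$ gives $y(0)=0$, we compute
\[
\dot y = \dot x - \mathcal{C}\dot u = \mathcal{A}(t) y + \bigl(\mathcal{B}(t) + \mathcal{A}(t)\mathcal{C}\bigr) u = \mathcal{A}(t) y + \mathcal{M}_0(t) u,
\]
so by Duhamel's formula, using $x(T) = y(T) + \mathcal{C} u(T)$,
\[
x(T) = \mathcal{C} u(T) + \int_0^T \phi(T,s)\mathcal{M}_0(s) u(s)\,ds.
\]
This identity is the backbone of the proof: as $u$ ranges over $H^1(0,T;\R^m)$ with $u(0)=0$, the boundary value $u(T)$ is free and contributes exactly $\mathcal{C} \R^m$, while the integral term contributes the image of the input-to-state operator of the reduced system.

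Next, I would identify the range of the integral operator. Since $u(T)$ is unconstrained, the set of admissible $u$ is dense in $L^2(0,T;\R^m)$ (for instance, approximate an arbitrary $L^2$ function by $H^1$ functions vanishing at $0$), and the integral operator is continuous $L^2\to \R^n$. Hence its image on our class equals its image on all of $L^2$, which by a classical Hilbert-space argument coincides with
\[
\text{Span}\bigl\{\phi(T,s)\mathcal{M}_0(s) v;\ s\in[0,T],\ v\in\R^m\bigr\}.
\]
Factoring $\phi(T,s) = \phi(T,t_0)\phi(t_0,s)$, the task reduces to identifying $\text{Span}\{N(s)v\}$ with $N(s) := \phi(t_0,s)\mathcal{M}_0(s)$.

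The last step is a Silverman-Meadows-type computation: differentiating $N$ and using $\partial_s \phi(t_0,s) = -\phi(t_0,s)\mathcal{A}(s)$ with the recursion defining $\mathcal{M}_i$ in \eqref{N1}, one obtains by induction
\[
N^{(i)}(t_0) = \mathcal{M}_i(t_0)\qquad \text{for all } i\ge 0.
\]
Under the analyticity hypothesis on $\mathcal{A}$ and $\mathcal{B}$, the matrix-valued function $N$ is real-analytic on a neighborhood of $[0,T]$, so the span of its values on $[0,T]$ coincides with the span of all its derivatives at $t_0$: indeed, the latter span $V$ contains every Taylor polynomial of $N$ at $t_0$, hence every value $N(s)$ in the disk of convergence, and analyticity propagates this to all of $[0,T]$; the reverse inclusion follows from expressing derivatives as limits of divided differences in $\text{Span}\{N(s)\}$, which is closed since finite-dimensional.

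The main obstacle I anticipate is verifying carefully this analytic spanning property, which is the heart of the time-varying extension of Kalman's condition, together with the density step showing that the $H^1$--with--$u(0)=0$ class suffices to sweep out the full $L^2$ image (both are standard but must be handled cleanly to avoid losing information through the $u(T)$ boundary term). Combining the three steps yields the formula \eqref{reachable}.
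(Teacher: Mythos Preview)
The paper does not supply a proof of this proposition: it is quoted verbatim from \cite[Propositions 2.4 and 2.5]{GR} and used as a black box. Your argument is correct and is in fact the natural route to such a statement (and almost certainly the one in \cite{GR}): the substitution $y=x-\mathcal{C}u$ kills the $\dot u$ term, Duhamel gives the splitting $x(T)=\mathcal{C}u(T)+\int_0^T\phi(T,s)\mathcal{M}_0(s)u(s)\,ds$, and the analytic Silverman--Meadows computation $N^{(i)}(t_0)=\mathcal{M}_i(t_0)$ identifies the span of the integral kernel with the span of the $\mathcal{M}_i(t_0)$.

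One small point worth tightening: to get \emph{equality} $\mathcal{R}_{u(0)=0}=\mathcal{C}\R^m+\text{Image}$ (rather than just ``$\subset$''), you need that the two contributions can be realized independently. The cleanest way is to note that $\mathcal{R}_{u(0)=0}$ is a linear subspace, that controls in $H^1_0(0,T;\R^m)$ (vanishing at both endpoints) are already dense in $L^2$, so the full integral image is contained in $\mathcal{R}_{u(0)=0}$; then any $u$ with $u(0)=0$, $u(T)=c$ shows $\mathcal{C}c\in\mathcal{R}_{u(0)=0}$ modulo the image, hence $\mathcal{C}\R^m\subset\mathcal{R}_{u(0)=0}$ as well. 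With that adjustment your proof is complete.
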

Recall that the fundamental solution associated with  $\dot x={\mathcal A} (t)x$ is defined as the solution to
\begin{eqnarray*}
\frac{\partial \phi } {\partial t } &=& {\mathcal A} (t) \phi (t,s), \\
\phi (s,s) &=& Id. 
\end{eqnarray*} 
For notational convenience, we introduce the matrices
\be
\label{N2}
\hat A (t) ={\mathcal J }^{-1}A(t) ,\quad \hat B (t) = {\mathcal J} ^{-1} B (t), \quad \hat C = {\mathcal J} ^{-1} C, \quad
{\mathcal M} _i(t) = \left( \begin{array}{c} U_i(t)\\ V_i(t) \end{array} \right) ,   
\ee
where $\hat A (t) \in \R ^{6\times 6}$ , $\hat B(t), \hat C , U_i(t),V_i(t)\in \R ^{6\times m}$. Then 
\be
\label{N3}
 \left( \begin{array}{c} U_0(t)\\ V_0(t) \end{array} \right)  = 
  \left( \begin{array}{c} \hat C \\ \hat B(t) + \hat A (t) \hat C  \end{array} \right) , 
\ee
while 
\be
\label{N4}
 \left( \begin{array}{c} U_i(t)\\ V_i(t) \end{array} \right)  = 
  \left( \begin{array}{c} U_{i-1}'(t)  -D(t) U_{i-1} (t) -V_{i-1} (t)  \\ V_{i-1}'(t) - \hat A (t) V_{i-1}(t) \end{array} \right) .
\ee
In certain situations, half of the terms $U_i(t)$ and $V_i(t)$ vanish at $t=T$. The following result, whose proof is given in Appendix, 
will be used thereafter. 
\begin{proposition}
\label{prop100}
If $\hat C\in \R ^{6\times m}$ is given and $\hat A,D$ (resp. $\hat B$) denote some functions in $C^\infty ([0,T]; \R ^{6\times 6} )$ 
(resp. in $C^\infty ([0,T];\R ^{6\times m}$) fulfilling 
\be
\label{N5}
\hat A ^{(2l)} (T) = D ^{(2l)} (T) =0 \ \textrm{ and } \  \hat B ^{(2l)} (T) =0 \qquad \forall l\in \N ,
\ee 
then the sequences $(U_i)_{i\ge 0}$ and $(V_i)_{i\ge 0} $ defined in \eqref{N3}-\eqref{N4} satisfy
\ba
&&V_{2k}^{(2l)} (T) = V_{2k+1}^{(2l+1)} (T) =0, \qquad \forall k,l\in\N , \label{N6}\\
&&U_{2k+1}^{(2l)} (T) = U_{2k} ^{(2l+1 )} (T ) =0, \qquad \forall k,l\in \N . \label{N7}
\ea  
\end{proposition}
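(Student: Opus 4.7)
The plan is to prove the four relations \eqref{N6}--\eqref{N7} simultaneously by induction on the index $i$, exploiting a clean parity bookkeeping that matches the hypothesis \eqref{N5} (vanishing of \emph{even}-order derivatives of $\hat A, D, \hat B$ at $T$) against the conclusion (vanishing of a fixed parity of derivatives of $U_i, V_i$ at $T$).

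For the base case $i=0$, I would verify directly from \eqref{N3}: since $U_0 = \hat C$ is a constant matrix, $U_0^{(2l+1)}(T) = 0$ for every $l\ge 0$ trivially. For $V_0 = \hat B + \hat A\hat C$, differentiating $2l$ times and evaluating at $T$ yields $\hat B^{(2l)}(T) + \hat A^{(2l)}(T)\,\hat C$, which vanishes by hypothesis \eqref{N5}.

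For the inductive step, I would treat the four recursions coming from \eqref{N4} in turn. As a representative case, to show $V_{2k}^{(2l)}(T) = 0$, I differentiate $V_{2k} = V_{2k-1}' - \hat A\, V_{2k-1}$ exactly $2l$ times and apply Leibniz:
\[
V_{2k}^{(2l)}(T) \;=\; V_{2k-1}^{(2l+1)}(T) \;-\; \sum_{j=0}^{2l} \binom{2l}{j}\, \hat A^{(j)}(T)\, V_{2k-1}^{(2l-j)}(T).
\]
The first term vanishes by the inductive hypothesis ($V_{2k-1}$ is odd-indexed, order $2l+1$ is odd). In the sum, when $j$ is even, $\hat A^{(j)}(T)=0$ by \eqref{N5}; when $j$ is odd, $2l-j$ is odd, so $V_{2k-1}^{(2l-j)}(T) = 0$ by the inductive hypothesis. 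So every term is killed. The other three cases are handled identically: for $U_{2k}^{(2l+1)}(T)$ one uses $U_{2k} = U_{2k-1}' - D\, U_{2k-1} - V_{2k-1}$, invoking \eqref{N5} on $D$ and the inductive hypothesis on $U_{2k-1}$ (even-order derivatives vanish) and on $V_{2k-1}$ (odd-order derivatives vanish); the cases $V_{2k+1}^{(2l+1)}(T)$ and $U_{2k+1}^{(2l)}(T)$ are symmetric.

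There is no real obstacle here: the argument is essentially combinatorial, and the main thing to get right is matching parities so that in every Leibniz term either a derivative of $\hat A, D, \hat B$ of even order appears (killed by \eqref{N5}) or a derivative of $U_{i-1}, V_{i-1}$ of the parity killed by the inductive hypothesis. Once this pattern is set up cleanly, each of the four identities follows by a one-line computation.
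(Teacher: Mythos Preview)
Your argument is correct and rests on the same idea as the paper's proof: induction combined with Leibniz' rule and the parity hypothesis \eqref{N5}. The paper organizes the induction slightly differently --- it first proves $V_{2k}^{(2l)}(T)=0$ by induction on $k$, applying the recursion \eqref{N4} \emph{twice} to pass from $V_{2k}$ to $V_{2k+2}$, and then deduces the remaining three identities in sequence --- whereas your single induction on $i$, using the one-step recursion directly, handles all four relations at once and is somewhat more economical.
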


The following result, which is one of the main results in this paper,  shows that under suitable assumptions the local controllability of  (\ref{systempq})
holds with less than six control inputs.
\begin{theorem} 
\label{thm1}
Assume that  \eqref{cond. symmetric domain}, \eqref{cond. symmetric first control} and \eqref{cond. alpha} hold. Pick any $T>0$. 
If the rank condition
\beq \label{cond1}
\textrm{rank }({\mathcal C},{\mathcal M}_0(T), {\mathcal M}_1(T), {\mathcal M}_2(T), ...) =12
%V_1(T),V_3(T),V_5(T),...) = 12,
\eeq
%and
%\beq \label{cond2}
%\textrm{rank }(\hat C,U_2(T),U_4(T),U_6(T),...) = 6,
%\eeq
holds, then the system (\ref{systempq}) with state 
$(h,\vec q,l,r) \in \R^{12}$ and control $w \in \R ^m$ is locally controllable
 around the origin in time $T$. We can also impose that the control input 
 $w \in H^2(0, T; \R ^m)$ satisfies $w(0) = 0$. Moreover, for some
 $\eta>0$,	 there is a	 $C^1$ map from $B_{\R^{24}} (0, \eta)$	to $H^2(0, T; \R^m)$, which associates with
 $(h_0,{\vec q}_0,l_0,r_0,h_T,{\vec q}_T,l_T,r_T )$ a control satisfying $w(0) = 0$ and steering the state of the 
 system from $(h_0,{\vec q}_0,l_0,r_0)$ at $t=0$ to $(h_T,{\vec q}_T,l_T,r_T)$ at $t=T$.
\end{theorem}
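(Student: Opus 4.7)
The plan is to carry out the return method set up above. Around the closed-loop reference trajectory $(\overline h, \overline{\vec q}, \overline l, \overline r, \overline w)$ just constructed (cf.\ \eqref{P1000}--\eqref{X2}), whose four state components vanish at both $t=0$ and $t=T$, the first-order deviation obeys the linear control system \eqref{B251} in which the control $f=\hat w$ appears together with $\dot f$. Once \eqref{B251} is shown to be controllable from the zero state at $t=0$, under the constraint $\hat w(0)=0$, to an arbitrary target at $t=T$, a classical inverse function theorem argument will transfer this to the nonlinear system \eqref{systempq} and produce local controllability near the origin with a $C^1$ dependence of $w$ on the boundary data.

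For the controllability of \eqref{B251}, the key observation is that on $[2T/3, T]$ the cut-off $\xi$ is identically $1$, so $\overline l_1(t) = 2\lambda(t-T)$ is polynomial and, by \eqref{X1}--\eqref{X2}, $\overline w_1$ is real-analytic on this subinterval. The matrices $\mathcal A(t), \mathcal B(t)$ therefore extend real-analytically to an open neighbourhood of $[2T/3, T]$, which puts us in a position to apply Proposition \ref{prop1000} on that subinterval. Choosing $t_0 = T$ and using $\phi(T, T) = Id$, the set of states reachable at $t = T$ from the zero state at $t = 2T/3$ with control satisfying $\hat w(2T/3) = 0$ equals
\[
\mathcal R \;=\; \mathcal C\,\R^m \,+\, \textrm{Span}\{\mathcal{M}_i(T)u : u \in \R^m,\ i \ge 0\},
\]
and the rank hypothesis \eqref{cond1} gives $\mathcal R = \R^{12}$. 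One then picks $\hat w$ smooth on $(2T/3, T)$ with $\hat w(2T/3) = \dot{\hat w}(2T/3) = 0$, so that its extension by zero on $[0, 2T/3]$ belongs to $H^2(0, T; \R^m)$ and satisfies $\hat w(0) = 0$; by the homogeneous dynamics, the corresponding linearized state is still zero at $t = 2T/3$, so \eqref{B251} is controllable from zero on the full interval $[0, T]$ under the constraint $\hat w(0) = 0$.

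For the passage to the nonlinear system, I would introduce the end-point map
\[
F : H^2_0(0, T; \R^m) \times B_{\R^{12}}(0, \eta_1) \longrightarrow \R^{12}, \qquad F(\hat w, x_0) = x(T),
\]
where $H^2_0 := \{w \in H^2(0, T; \R^m) : w(0) = 0\}$ and $x(t) = (h(t), \vec q(t), l(t), r(t))$ is the solution of \eqref{systempq} driven by the control $\overline w + \hat w$ with initial datum $x_0$. Since the right-hand side of \eqref{systempq} is smooth in $(h, \vec q, l, r, w)$ and linear in $\dot w$, and $H^2(0, T) \hookrightarrow C^1([0, T])$, standard Cauchy--Lipschitz theory produces a well-defined flow and shows that $F$ is of class $C^1$ in a neighbourhood of $(0, 0)$ with $F(0, 0) = 0$. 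Its partial differential $D_{\hat w} F(0, 0)$ is precisely the input-to-final-state map at $t=T$ of \eqref{B251} with zero initial datum, which is surjective by the previous paragraph. Fixing a continuous right inverse $R : \R^{12} \to H^2_0$ and writing $\hat w = R(y)$, the equation $F(R(y), x_0) = x_T$ takes the form $G(y, x_0, x_T) = 0$ with $D_y G(0, 0, 0) = Id$, and the classical implicit function theorem produces a $C^1$ map $(x_0, x_T) \mapsto y(x_0, x_T)$ on a neighbourhood of $0$ in $\R^{24}$. Setting $w = \overline w + R(y(x_0, x_T))$ then yields the required control.

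The hard part is really the functional-analytic setup of this last step: verifying the $C^1$ regularity of $F$ on the Banach space $H^2_0(0, T; \R^m)$, and in particular handling the $Cw'$ term in \eqref{systempq}, for which the $H^2$ regularity imposed on $w$ is precisely what is needed to make the flow a smooth map in a Banach space setting, along the lines of the analogous analysis in \cite{GR}.
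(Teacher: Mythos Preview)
Your proof is correct and follows the same two-step scheme as the paper: controllability of the linearized system \eqref{B251} via Proposition \ref{prop1000} at $t_0=T$, then an inverse/implicit function theorem argument on $\{w\in H^2(0,T;\R^m):w(0)=0\}$ to pass to \eqref{systempq}. Your restriction to $[2T/3,T]$, where $\overline l_1$ is polynomial and hence $\overline w_1$ is real-analytic, is actually more careful than the paper's direct application of Proposition \ref{prop1000} on $[0,T]$ (the paper does not address the $C^\omega$ hypothesis); and your implicit-function setup with a right inverse $R$ is equivalent to the paper's restriction of the end-point map $\Gamma:\R^{12}\times H^2_0\to\R^{24}$ to $(\ker d\Gamma(0))^\perp$.
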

\begin{proof}
%Using \eqref{cond rank 1}-\eqref{cond rank 2}) and proceeding as in \cite[Theorem (2.9), Step 2]{GR}, one can show
 %the controllability of \eqref{third lineal system}  (and hence of \eqref{second lineal system}) for $0 <|\lambda| < \lambda_0$ with $\lambda_0$ 
% small enough.
\noindent
{\bf Step 1: Controllability of the linearized system.}\\ 
Letting $t_0=T$ in Proposition \ref{prop1000} yields
\begin{equation*}
{\mathcal R}_{f(0)=0} = {\mathcal C} \R ^m + \sum _{i\ge 0} {\mathcal M}_i(T)\R ^m. 
\end{equation*}
%&=&\left( \begin{array}{c} 0 \\ \hat C  \end{array} \right) \R ^m  + \sum _{j\ge 0}  \left( \begin{array}{c} 0 \\ V_{2j+1}(T)  \end{array} \right) \R ^m
%+ \left( \begin{array}{c} \hat C \\ 0 \end{array} \right) \R ^m  + \sum _{j\ge 1}  \left( \begin{array}{c} U_{2j}(T) \\0  \end{array} \right) \R ^m. 
% \end{eqnarray*}
Thus, if the condition \eqref{cond1} is fulfilled, we infer that ${\mathcal R}_{f(0)=0} = \R ^{12}$, i.e. the system  
\eqref{third lineal system} is controllable. The same is true for \eqref{second lineal system}. 

\noindent
{\bf  Step 2: Local controllability of the nonlinear system.}\\
 Let us introduce the Hilbert space
$$\mathcal{H}:=\R^{12}\times\{f\in H^2(0,T;\R^m);\; f(0)=0 \}$$
endowed with its natural Hilbertian norm
$$\norm{(x,f)}^2_\mathcal{H}=||x|| ^2_{\R^{12}}+\norm{f}^2_{H^2(0,T)}.$$
We denote by $B_\mathcal{H}(0,\delta)$ the open ball in $\mathcal{H}$ with center $0$ and radius $\delta$, i.e.
$$B_\mathcal{H}(0,\delta) = \{(x,f) \in \mathcal{H}; \norm{(x,f)}_\mathcal{H} <\delta\}.$$
Let us introduce the map
$$
\begin{array}{rcl}
\Gamma:B_\mathcal{H}(0,\delta)&\to&\R^{24}\\
((h_0,{\vec q}_0,l_0,r_0),f)&\mapsto&(h_0,{\vec q}_0,l_0,r_0,h(T),{\vec q}(T),l(T),r(T)),
\end{array}
$$
where $(h(t),\vec q\, (t)  ,l(t),r(t))$ denotes the solution of
\beq
\left\{
\begin{array}{ccl}
h'&=&  (1- || \vec q\, ||^2) l+ 2\sqrt{1-||\vec q\, || ^2}\,  \vec q\times l  + (l\cdot \vec q \, ) \vec q - \vec q \times l\times \vec q, \\[3mm]
{\vec q\,} '&=& \frac{1}{2} (\sqrt{1- ||\vec q\, ||^2} \, r + \vec q \times r ), \\[3mm]
\left(\begin{array}{c}
l\\
r
\end{array}\right)'&=&\mathcal{J} ^{-1}(C( \overline{w}'+f')+F(l,r,\overline{w}+f)),\\[3mm]
(h(0),{\vec q}(0),l(0),r(0))&=&(h_0,{\vec q}_0,l_0,r_0).
\end{array}
\right.
\eeq
Note that $\Gamma$ is well defined for $\delta > 0$ small enough (provided that $\lambda _0$ has been taken sufficiently small). 
Using the Sobolev embedding $H^2(0,T;\R ^m ) \subset C^1([0,T];\R ^m )$, 
we can prove as in \cite[Theorem 1]{sontag-book} that $\Gamma$ is of class $C^1$ on $B_\mathcal{H}(0,\delta)$ and that 
its tangent linear map at the origin is given by
$$d\Gamma(0)((\hat{h}_0,\widehat{\vec q}_0,\hat{l}_0,\hat{r}_0),f)=(\hat{h}_0,\widehat{\vec q}_0,\hat{l}_0,\hat{r}_0,\hat{h}(T),\widehat{\vec q\,} (T),\hat{l}(T),\hat{r}(T)),$$
where $(\hat{h}(t),\widehat{\vec q\, }(t),\hat{l}(t),\hat{r}(t))$ solves the system
(\ref{second lineal system}) with  the initial conditions 
$$(\hat{h}(0),\widehat{\vec q}(0),\hat{l}(0),\hat{r}(0))=(\hat{h}_0,\widehat{\vec q}_0,\hat{l}_0,\hat{r}_0).$$
We know from Step 2 that (\ref{second lineal system}) is controllable, so that $d\Gamma(0)$ is onto.
 Let $V := (\textrm{ker } d \Gamma(0))^\perp$ 
denote the orthogonal complement	of $\textrm{ker	}d \Gamma(0)$  in $\mathcal{H}$.
Then $d \Gamma (0) |_V$ is invertible, and therefore it follows from the inverse function theorem that the map $\Gamma|_V : V \to \R^{24}$
 is locally invertible at the origin. More precisely, there exists a number
 $\delta>0$ and an open set $\omega  \subset \R^{24}$
  containing $0,$ such that the map $\Gamma: B_\mathcal{H}(0,\delta)\cap V \to \omega$ is well-defined, of class $C^1$, invertible, 
  and with an inverse map of class $C^1$. Let us denote this inverse map by $\Gamma^{-1}$, and let us write
   $\Gamma^{-1}(x_0,x_T) = (x_0,f(x_0,x_T))$. Finally, let us set $w = \overline{w} + f$. Then, for $\eta > 0$ small enough, we have that 
\beq
w\in C^1(B_{\R^{24}}(0,\eta),H^2(0,T; \R ^m)),
\eeq
and that for $||(h_0,\vec q_0,l_0,r_0,h_T,\vec q_T, l_T,r_T)|| _{\R ^{24}} <\eta$,  the solution $(h(t),{\vec q}(t),l(t),r(t))$ of system (\ref{systempq}), with the initial conditions 
$$(h(0),{\vec q}(0),l(0),r(0))=(h_0,{\vec q}_0,l_0,r_0),$$
satisfies 
$$(h(T),{\vec q}(T),l(T),r(T))=(h_T,{\vec q}_T,l_T,r_T).$$
The proof of Theorem \ref{thm1} is complete.
\end{proof}
We now derive two corollaries of Theorem \ref{thm1}, that will be used in the next section. 
We introduce the matrices
\beq
{\bf A}=\Big(A_L \Big| A_R\Big),
\eeq
where
\beq
A_L=\left(\begin{array}{ccc}
-(L^M_{1})_{11} & 0& 0\\
0 &-(L^M_{1})_{22} & 0 \\
0 &0 &-(L^M_{1})_{33}  \\
0 & 0& 0\\
0 &0 & \alpha(M_{33}-M_{11})-((L^J_1)_{23}+(C^{M})_{11}) \\
0 &\alpha(M_{11}-M_{22})-((L^J_{1})_{32}-(C^{M})_{11}) &0  
\end{array}\right),
\eeq
\beq
A_R=\left(\begin{array}{ccc}
0 & 0& 0\\
0 &0 & -(R^M_{1})_{23}\\
0 &-(R^M_{1})_{32} &0 \\
-(R^J_{1})_{11} & 0& 0\\
0 &\alpha N_{32}-(R^J_{1})_{22} & 0 \\
0 &0 & -\alpha N_{23}-(R^J_{1})_{33} 
\end{array}\right),
\eeq
\beq
\begin{array}{c}
\begin{array}{rcl}
{\bf B}&=& \left(\begin{array}{c}0 \\ \\
\D -\alpha S( e_1) C^{M} 
\end{array}\right)
-\alpha\left(\begin{array}{c|c|c|c}L_1^M e_{1}& L_2^M e_{1}&\cdot\cdot\cdot &L_m^M e_{1}\\ & & &\\ 
\D L_1^J e_{1} & L^J_2 e_{1}&\cdot\cdot\cdot &L_m^J e_{1}
\end{array}\right)-
\left(\begin{array}{c} \D  W^M_{1} \\ \\
W^J_{1} 
\end{array}\right)
\end{array}\\ \\
-\left(\begin{array}{c|c|c|c} \D  W_1^M e_{1}&W_2^M e_{1} &\cdot\cdot\cdot &W_m^M e_{1} \\ & & &\\ 
W^J_1 e_{1} &W^J_2 e_{1}&\cdot\cdot\cdot &W^J_m e_{1}
\end{array}\right),
\end{array}
\eeq
%{\bf B}&=& \left(\begin{array}{c}0 \\ \\
%\D -\alpha S( e_1) C^{M} 
%\end{array}\right)-\alpha\left(\begin{array}{c}L^M e_{1} \\ \\
%\D L^J e_{1} 
%\end{array}\right)-\left\{
%\left(\begin{array}{c} \D  W^M_{1} \\ \\
%W^J_{1} 
%\end{array}\right)+\left(\begin{array}{c} \D  W^M e_{1} \\ \\
%W^J e_{1} 
%\end{array}\right)\right\},
%\end{array}
%\end{array}
%\eeq
and 
\beq
{\bf D}=\left(\begin{array}{cc}
0 &-\alpha S(  e_1 ) \\
0 &0  
\end{array}\right).
\eeq

The first corollary  will be used later to derive a controllability result  with only four control inputs.
\begin{corollary}
\label{cor1}
If both rank conditions
\beq \label{cond rank 1}
\textrm{rank }(C, {\bf B} + {\bf A}\mathcal{J}^{-1}C) = 6
\eeq
and
\beq \label{cond rank 2}
\textrm{rank }(C, \frac{1}{2}\mathcal{J}{\bf D}\mathcal{J}^{ -1}C + {\bf B} + {\bf A}\mathcal{J}^{ -1}C) = 6
\eeq
are fulfilled, then the condition \eqref{cond1} is satisfied for any $T>0$, so that the conclusion of Theorem \ref{thm1} is valid
for any $T>0$.
\end{corollary}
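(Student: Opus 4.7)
\medskip

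\noindent\textbf{Proof proposal.} By Theorem \ref{thm1}, it suffices to check the rank condition \eqref{cond1} for the reference trajectory built in Section 3.4.2 with the free parameter $\lambda$. The plan is to expand the first three matrices $\mathcal{M}_0(T), \mathcal{M}_1(T), \mathcal{M}_2(T)$ to leading order in $\lambda$, and to show that together with $\mathcal{C}$ they already span $\R^{12}$ when both rank conditions \eqref{cond rank 1}-\eqref{cond rank 2} hold.

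The starting point is the high degeneracy of the reference trajectory at $t=T$. From \eqref{P2}-\eqref{P3} we have $\overline{l}_1(T)=0$, $\overline{l}_1'(T)=2\lambda$, $\overline{l}_1^{(k)}(T)=0$ for $k\ge 2$; integrating \eqref{X1}-\eqref{X2} (note the cancellation $\int_0^T \dot{\overline{l}}_1=\overline{l}_1(T)-\overline{l}_1(0)=0$) gives $\overline{w}_1(T)=O(\lambda^2)$, $\overline{w}_1'(T)=2\lambda/\alpha+O(\lambda^2)$, and $\overline{w}_1^{(k)}(T)=O(\lambda^2)$ for $k\ge 2$. Since $D(t), A(t), B(t)$ depend affinely on $(\overline{l}_1,\overline{w}_1)$, write
\[
A(t)=\overline{l}_1(t)\tilde A_l+\overline{w}_1(t)A_w,\quad B(t)=\overline{l}_1(t)\tilde B_l+\overline{w}_1(t)B_w,\quad D(t)=\overline{l}_1(t)\tilde D_l.
\]
Then $D(T)=0$, $A(T), B(T)=O(\lambda^2)$, all even higher derivatives at $T$ are $O(\lambda^2)$, and the first derivatives satisfy
\[
D'(T)=\tfrac{2\lambda}{\alpha}\mathbf{D},\qquad A'(T)=\tfrac{2\lambda}{\alpha}\mathbf{A}+O(\lambda^2),\qquad B'(T)=\tfrac{2\lambda}{\alpha}\mathbf{B}+O(\lambda^2).
\]
These three identities come from the entry-by-entry check that $\alpha\tilde A_l+A_w=\mathbf{A}$, $\alpha\tilde B_l+B_w=\mathbf{B}$, $\alpha\tilde D_l=\mathbf{D}$, which rests on $\alpha(m_0+M_{11})=-(C^M)_{11}$ and the explicit formulas for $A(t), B(t)$ recalled in Section 3.4.2.

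Feeding this into the recursion \eqref{N3}-\eqref{N4} one finds $U_0=\hat C$ (constant), $V_0(T)=\hat B(T)+\hat A(T)\hat C=O(\lambda^2)$, $U_1(T)=-V_0(T)=O(\lambda^2)$, and
\[
V_1(T)=V_0'(T)+O(\lambda^4)=\tfrac{2\lambda}{\alpha}\mathcal{J}^{-1}\bigl(\mathbf{B}+\mathbf{A}\mathcal{J}^{-1}C\bigr)+O(\lambda^2),
\]
\[
U_2(T)=U_1'(T)-V_1(T)=-\tfrac{2\lambda}{\alpha}\mathbf{D}\hat C-2V_0'(T)+O(\lambda^2)=-\tfrac{4\lambda}{\alpha}\mathcal{J}^{-1}\bigl(\tfrac{1}{2}\mathcal{J}\mathbf{D}\mathcal{J}^{-1}C+\mathbf{B}+\mathbf{A}\mathcal{J}^{-1}C\bigr)+O(\lambda^2).
\]
Dividing the third and fourth $m$-column blocks of $N_\lambda:=(\mathcal{C},\mathcal{M}_0(T),\mathcal{M}_1(T),\mathcal{M}_2(T))$ by $\lambda$ preserves the rank for $\lambda\ne 0$, and the resulting matrix tends, as $\lambda\to 0$, to
\[
\tilde N_0=\left(\begin{array}{cccc} 0 & \hat C & 0 & -(4/\alpha)\mathcal{J}^{-1}\mathbf{M}_2 \\ \hat C & 0 & (2/\alpha)\mathcal{J}^{-1}\mathbf{M}_1 & 0 \end{array}\right),
\]
where $\mathbf{M}_1=\mathbf{B}+\mathbf{A}\mathcal{J}^{-1}C$ and $\mathbf{M}_2=\tfrac{1}{2}\mathcal{J}\mathbf{D}\mathcal{J}^{-1}C+\mathbf{M}_1$. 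In $\tilde N_0$ the column blocks $1,3$ have zero top half while $2,4$ have zero bottom half, so the rank of $\tilde N_0$ is the sum of the ranks of its two halves, i.e.
\[
\mathrm{rank}(\hat C,\mathcal{J}^{-1}\mathbf{M}_2)+\mathrm{rank}(\hat C,\mathcal{J}^{-1}\mathbf{M}_1)=\mathrm{rank}(C,\mathbf{M}_2)+\mathrm{rank}(C,\mathbf{M}_1)=6+6=12
\]
by \eqref{cond rank 1}-\eqref{cond rank 2}. Lower semi-continuity of the rank then yields $\mathrm{rank}(N_\lambda)=12$ for all sufficiently small $\lambda\ne 0$, so \eqref{cond1} holds and Theorem \ref{thm1} applies.

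The main obstacle I anticipate is the bookkeeping in the second paragraph: one must carefully verify that the various coefficients in $A(t), B(t)$ combine with the factor $\alpha=-(C^M)_{11}/(m_0+M_{11})$ arising from the ODE for $\overline{w}_1$ to reproduce exactly the matrices $\mathbf{A}$ and $\mathbf{B}$ defined in the statement, entry by entry. Everything else (the degeneracy of the reference trajectory, the recursion for $\mathcal{M}_i$, and the final semicontinuity argument) is routine once this identification is in place.
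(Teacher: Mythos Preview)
Your argument is correct and is essentially the paper's own proof: the paper computes the same leading-order expansions of $\mathcal{M}_0(T),\mathcal{M}_1(T),\mathcal{M}_2(T)$ in $\lambda$, obtains the same block-triangular limit, and concludes by lower semicontinuity of the rank for $\lambda$ small. The only difference is cosmetic: the paper first treats the special case $\gamma+\alpha\beta=0$ separately (where $\overline w_1=\alpha^{-1}\overline l_1$ exactly, so the $O(\lambda^2)$ remainders vanish identically and Proposition~\ref{prop100} applies directly), and then handles the general case $\gamma+\alpha\beta\neq 0$ by your perturbative argument; your uniform treatment covers both at once and is a mild streamlining.
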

\begin{proof}
We distinguish two cases.\\
{\sc Case 1: $\gamma + \alpha \beta =0$.} \\
We begin with the ``simplest'' case when $\gamma + \alpha \beta =0$. Pick any $T>0$ and let
$\overline{l}_1,\overline{w}_1$ be as in \eqref{P1000} and \eqref{X1}-\eqref{X2}. 
 Let $\overline{g}_1:=\overline{l}_1 -\alpha\overline{w}_1$. It is clear that 
$\dot {\overline{g}}_1 = \beta \overline{w}_1\overline{g}_1$, hence $\overline{g}_1\equiv 0$. We infer that 
\begin{eqnarray*}
&&\overline{w}_1^{(k)}(T) = \alpha ^{-1} \overline{l}_1^{(k)}(T)=0\quad \text{ for }\  k\in \N \setminus \{1\}, \\
&&\overline{w}_1'(T) = \alpha ^{-1} \overline{l}_1' (T)=2\lambda/\alpha \ne 0.
\end{eqnarray*}
It follows that 
\begin{eqnarray}
&&A^{(k)}(T) =0,\quad  B^{(k)}(T)=0,\quad  D^{(k)} (T) =0\qquad   \text{ for }\  k\in \N \setminus \{1\}, \label{P4} \\
&&A'(T) = \overline{w}_1'(T){\bf A} ,\quad  B'(T)=\overline{w}_1'(T){\bf B} ,\quad  D'(T) =\overline{w}_1'(T){\bf D}. \label{P4000}
\end{eqnarray}
Applying Proposition \ref{prop100}, we infer that 
\begin{multline*}
\textrm{rank } \big( {\mathcal C} , {\mathcal M}_0(T),  {\mathcal M}_1 (T),  {\mathcal M}_2(T) \big)  \\
=\textrm{rank } \big( 
\left(\begin{array}{c} 0\\ \hat C \end{array} \right), 
 \left(\begin{array}{c} \hat C\\ \hat B(T) + \hat A(T) \hat C  \end{array} \right), 
 \left(\begin{array}{c} 0\\ V_1(T) \end{array} \right), 
 \left(\begin{array}{c} U_2(T) \\  0 \end{array} \right) 
\big) .
\end{multline*}
On the other hand, it is easily seen that 
\begin{eqnarray*}
&&V_1(T) = V_0'(T) = {\mathcal J}^{-1}B'(T) +  {\mathcal J}^{-1} A'(T) {\mathcal J}^{-1} C 
=  \overline{w}_1'(T) \left(  {\mathcal J}^{-1}{\bf B}  +  {\mathcal J}^{-1} {\bf A} {\mathcal J}^{-1} C \right) ,\\
&&U_2(T) = -D'(T) U_0(T) -2V_0'(T) =-\overline{w}_1'(T) [ {\bf D} {\mathcal J}^{-1} C + 2{\mathcal J} ^{-1} ({\bf B} + {\bf A} {\mathcal J } ^{-1} C) ].
\end{eqnarray*}
It follows that
\begin{eqnarray*}
&&\textrm{rank}(\hat C, V_1(T))= \textrm{rank }(C, {\bf B} + {\bf A}\mathcal{J}^{-1}C) = 6,\\
&&\textrm{rank}(\hat C, U_2(T)) = \textrm{rank }(C, \frac{1}{2}\mathcal{J}{\bf D}\mathcal{J}^{ -1}C + {\bf B} + {\bf A}\mathcal{J}^{ -1}C) = 6,
\end{eqnarray*}
and 
\[
\textrm{rank } \big( {\mathcal C} , {\mathcal M}_0(T),  {\mathcal M}_1 (T),  {\mathcal M}_2(T) \big) =12.
\]
Thus \eqref{cond1} is satisfied, as desired.\\
{\sc Case 2. $\gamma + \alpha\beta \ne 0$.} 
We claim that for $T>0$ arbitrary chosen and $\lambda_0$ small enough, we have for 
$0<\lambda <\lambda _0$,
\[
\text{rank } (\Cs, \Ms _0(T), \Ms _1(T), \Ms _2 (T))=12.
\]
First, $\|\overline{l}_1\|_{W^{2,\infty}(0,T)}=O(\lambda)$ still with
$\overline{l}_1(T)=\ddot{\overline{l}}_1(T)=0$. From \eqref{X1}-\eqref{X2}, we
infer with Gronwall lemma (for $\lambda _0$ small enough) that 
$\overline{w}_1$ is well defined on $[0,T]$ and that  
$\|\overline{w}_1\|_{L^\infty (0,T)}=O(\lambda )$. This also yields
(with \eqref{X1}) $||\overline{w}_1||_{W^{2,\infty}(0,T)}=O(\lambda )$. 
Next, integrating
in \eqref{X1} over $(0,T)$ yields $\overline{w}_1(T)=O(\lambda ^2)$. Finally,
derivating in \eqref{X1} gives $\ddot{\overline{w}}_1(T)=O(\lambda ^2)$.
We conclude that 
\[
(A(T),B(T),\ddot A(T),\ddot B(T))=O(\lambda ^2),\quad D(T)=0,
\]
while 
\[
(\dot A(T),\dot B(T),\dot D(T))=  (2 \lambda / \alpha )   ({\mathbf A}, {\mathbf B},{\mathbf D})
+O(\lambda ^4),
\]
for  $\dot{\overline{l}}_1(T)=\alpha \dot{\overline{w}}_1(T)  + O(\lambda ^4)$. It follows that 
\begin{align*}
\text{\rm rank} \,& (\Cs,\Ms _0(T),\Ms _1(T),\Ms _2(T)) \quad\\
&=\text{\rm rank } \Bigg[
\left( \begin{array}{c}
0\\
{\Js} ^{-1} C
\end{array} \right) , 
\left( \begin{array}{c}
{\Js} ^{-1} C\\
0
\end{array} \right) ,\\
&\hskip 1cm
\left( \begin{array}{c}
0\\
{\Js} ^{-1} ({\mathbf B} + {\mathbf A}{\Js}^{-1}C) 
\end{array} \right) , 
\left( \begin{array}{c}
{\Js} ^{-1} [\Js {\mathbf D}{\Js}^{-1}C + 2 ({\mathbf B} + {\mathbf A}{\Js}^{-1}C)]\\
0
\end{array} \right) \Bigg] \\
&\quad =12,
\end{align*}
for $0< \lambda  <\lambda _0$ with $\lambda _0$ small enough, as desired.
\end{proof}

The second one is based on the explicit computations of ${\mathcal M}_i(T)$ for $i\le 8$. It will be used later to derive a controllability 
result with only three controls inputs.
\begin{corollary}
\label{cor2}
Let ${\bf E}:= {\bf B} + {\bf A} {\mathcal J}^{-1} C$. 
If the conditions
\beq \label{newcond1}
\textrm{rank }(C, {\bf E}, {\bf A}{\mathcal J}^{-1} {\bf E},({\bf A}{\mathcal J}^{-1})^2 {\bf E},({\bf A}{\mathcal J }^{-1})^3 {\bf E} ) = 6,
\eeq
and
\begin{multline}
\label{newcond2}
\textrm{rank }(C, \frac{1}{2}\mathcal{J}{\bf D}\mathcal{J}^{ -1}C + {\bf E}, 
({\mathcal J} {\bf D} {\mathcal J}^{-1} + 2{\bf A} {\mathcal J} ^{-1} ) {\bf E} ,\\ 
(8{\mathcal J} {\bf D} {\mathcal J}^{-1} + 11{\bf A} {\mathcal J} ^{-1} ) {\bf A}{\mathcal J }^{-1} {\bf E} , 
(17{\mathcal J} {\bf D} {\mathcal J}^{-1} + 64{\bf A} {\mathcal J} ^{-1} ) ( {\bf A} {\mathcal J }^{-1})^2 {\bf E}
) = 6,
\end{multline}
are fulfilled, then the condition \eqref{cond1} is satisfied, so that the conclusion of Theorem \ref{thm1} is valid.
\end{corollary}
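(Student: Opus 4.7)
The plan is to follow the strategy of Corollary \ref{cor1} but to push the iteration of the recursion in Proposition \ref{prop1000} up to $i=8$ instead of stopping at $i=2$. As in the proof of Corollary \ref{cor1}, I would separate the degenerate case $\gamma+\alpha\beta=0$ (which gives the cleanest algebra) and then recover the general case by a perturbation argument in $\lambda$.

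In the degenerate case, the relation $\overline{g}_1 = \overline{l}_1-\alpha\overline{w}_1\equiv 0$ from the proof of Corollary \ref{cor1} yields $\overline{w}_1 = \alpha^{-1}\overline{l}_1$, and since $\overline{l}_1(t)=2\lambda(t-T)$ near $T$, every derivative of $\overline{w}_1$ (and of $\overline{l}_1$) at $T$ vanishes except the first. Consequently
\[
A^{(k)}(T)=B^{(k)}(T)=D^{(k)}(T)=0\quad\text{for every }k\in\N\setminus\{1\},
\]
and Proposition \ref{prop100} applies, giving $V_{2k}(T)=0$ and $U_{2k+1}(T)=0$ for every $k\in\N$. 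Hence the single rank condition \eqref{cond1} decouples into the two conditions
\[
\textrm{rank}(\hat C,V_1(T),V_3(T),V_5(T),V_7(T))=6,\qquad\textrm{rank}(\hat C,U_2(T),U_4(T),U_6(T),U_8(T))=6.
\]

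The core of the argument is then an explicit computation of these eight vectors. Writing $a:=\hat A'(T)=\dot{\overline{w}}_1(T)\mathcal{J}^{-1}\mathbf{A}$ and $d:=D'(T)=\dot{\overline{w}}_1(T)\mathbf{D}$, and applying Leibniz's rule to the recursions $V_{i+1}=\dot V_i-\hat A V_i$ and $U_{i+1}=\dot U_i-DU_i-V_i$, essentially every cross term involving two or more factors of $\hat A,\hat B,D$ drops out because only their first derivative at $T$ is nonzero. An induction shows $V_{2k+1}(T)\in\R^*\cdot\mathcal{J}^{-1}(\mathbf{A}\mathcal{J}^{-1})^k\mathbf{E}$ for $k=0,1,2,3$, so after factoring $\mathcal{J}^{-1}$ the first rank condition becomes exactly \eqref{newcond1}. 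For $U_{2k}(T)$, the same type of induction, combined with the identity $\mathbf{D}^2=0$ (visible at once from the block form of $\mathbf{D}$) which annihilates most $D$--$D$ cross terms, produces expressions of the form
\[
U_{2k}(T) \in \R^*\cdot\mathcal{J}^{-1}\bigl(\alpha_k\,\mathcal{J}\mathbf{D}\mathcal{J}^{-1}+\beta_k\,\mathbf{A}\mathcal{J}^{-1}\bigr)(\mathbf{A}\mathcal{J}^{-1})^{k-2}\mathbf{E},\qquad k=2,3,4,
\]
with integer pairs $(\alpha_k,\beta_k)=(1,2),(8,11),(17,64)$. These are exactly the coefficients appearing in \eqref{newcond2}, so the second rank condition reduces to \eqref{newcond2}.

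For the non-degenerate case $\gamma+\alpha\beta\neq 0$, I would repeat the perturbation argument from Case 2 of Corollary \ref{cor1}: Gronwall's lemma gives $\|\overline{w}_1\|_{W^{2,\infty}(0,T)}=O(\lambda)$, integration in \eqref{X1} gives $\overline{w}_1(T)=O(\lambda^2)$ and $\ddot{\overline{w}}_1(T)=O(\lambda^2)$, while $\dot{\overline{w}}_1(T)=2\lambda/\alpha+O(\lambda^3)$. Every $\mathcal{M}_i(T)$ then equals its degenerate-case value plus a correction of relative order $\lambda^2$, and the lower semicontinuity of rank forces full rank $12$ provided $\lambda_0$ is small enough. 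The main obstacle is the bookkeeping that produces the specific coefficient pairs $(8,11)$ and $(17,64)$: each induction step creates several terms from $\dot U_i, DU_i$ and $V_i$, and one must simultaneously track the $V_{2k+1}(T)$ expressions feeding into $U_{2k+2}(T)$, but thanks to $\mathbf{D}^2=0$ and the vanishing of higher-order derivatives, the computation is routine once organized properly.
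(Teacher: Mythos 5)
Your proposal follows the paper's proof essentially verbatim: the paper likewise reduces Corollary \ref{cor2} to the argument of Corollary \ref{cor1} with the iteration pushed to $i\le 8$, invokes Proposition \ref{prop100} to discard half the terms, and relies on the explicit values of $V_i(T)$ ($i=1,3,5,7$) and $U_i(T)$ ($i=2,4,6,8$) stated in Propositions \ref{prop200} and \ref{prop300} (proved in the Appendix using exactly the facts you cite: only the first derivatives of $\hat A$, $\hat B$, $D$ survive at $T$, and $D'(T)^2=0$), yielding precisely the coefficient pairs $(1,2)$, $(8,11)$, $(17,64)$ and hence conditions \eqref{newcond1}--\eqref{newcond2}. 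The perturbation in $\lambda$ for the case $\gamma+\alpha\beta\neq 0$ is also the paper's route.
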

\begin{proof}
The proof is almost the same as those of  Corollary \ref{cor1}, the only difference being that we
need now to compute ${\mathcal M}_i(T)$ for $i\le 8$. In view of Proposition  \ref{prop100}, it is sufficient in Case 1 ($\gamma + \alpha \beta =0$) to compute 
$V_i(T)$ for $i\in \{ 1,3,5,7\}$ and $U_i(T)$ for $i\in \{2,4,6,8\}$. The results are displayed in two propositions, whose proofs
are given in Appendix.
\begin{proposition}
\label{prop200}
Assume that the pair $(\overline{h}_1, \overline{l}_1)$ is as in \eqref{P1000},  that $\overline{w}_1$ is as in 
\eqref{X1}-\eqref{X2}, and that $\gamma + \alpha \beta =0$. Then we have 
\ba
V_1(T) &=& V_0'(T), \label{P5}\\
V_3(T) &=& -3 \hat A '(T) V_0'(T), \label{P6} \\
V_5(T) &=& 15 \hat A'(T) ^2 V_0'(T), \label{P7}\\
V_7(T) &=& -105 \hat A'(T) ^3 V_0'(T). \label{P8}  
\ea
\end{proposition}

\begin{proposition}
\label{prop300}
Assume that the pair $(\overline{h}_1, \overline{l}_1)$ is as in \eqref{P1000},  that $\overline{w}_1$ is as in 
\eqref{X1}-\eqref{X2}, and that that $\gamma + \alpha \beta =0$. Then we have  
\ba
U_2(T) &=& -D'(T) U_0(T) -2V_0'(T),  \label{P21}\\
U_4(T) &=&  4 \big( D'(T) +  2 \hat A '(T) \big) V_0'(T), \label{P22}\\
U_6(T) &=& -3 (8 D'(T) + 11 \hat A'(T) ) \hat A'(T) V_0'(T), \label{P23} \\
U_8(T) &=& 6\big( 17 D'(T) + 64 \hat A '(T) \big) \hat A'(T) ^2 V_0'(T). \label{P24} 
\ea 
\end{proposition} 
\end{proof}
\section{Examples}
This section is devoted to examples of vehicles with ``quite simple'' shapes, for which the coefficients in the matrices in 
\eqref{matrix 1}-\eqref{matrix 6} can be computed explicitly.
We begin with the case of a vehicle with one axis of revolution, for which the controllability fails for any choice of the flow controls.   
\subsection{Solid of revolution}  
Let $f\in\mathcal{C}^{1}([a,b]; \R)$ be a nonnegative function such that $f(a)=f(b)=0$, and let 
$$\S =\Big\{ \Big(y_1, s f(y_1)\cos \theta, s f(y_1)\sin \theta \Big); \ y_1\in [a,b],\; s \in [0,1], \; \theta\in[0,2\pi ]\Big\} .$$
In other words, $\S$ is a solid of revolution (see Figure \ref{fig01:revolution}).

Assume that the density $\rho $ depends on $y_1$ only, i.e. $\rho =\rho (y_1)$. 
%$$\rho =\rho (y_1) \quad \forall y\in \S.$$ 
%\begin{eqnarray*}
%\rho(y_1,y_3,y_2)=\rho(y)\quad &&\forall y=(y_1,y_2,y_3)\in \S,\\ 
%\rho(S_3(y))=\rho(y) \quad &&\forall y\in \S.
 %\end{eqnarray*}
%This is the case if, e.g., $\rho =\rho(y_1)$. 
Clearly  %$h=h_{1} e_{1}$ and 
$J_{0}=\textrm{diag}(J_{1},J_{2},J_{2})$. On the other hand, 
$$\partial \Omega=\Big\{ \Big(y_1,f(y_1)\cos \theta ,f(y_1)\sin \theta \Big) ; \; y_1\in [a,b],\; \theta\in[0,2\pi ]\Big\},$$
and the normal vector $\n$ to $\partial \Omega$ is given by 
$$\n(y_1 ,\theta)=\frac{1}{\sqrt{1+(f'(y_1 ))^{2}}}\Big(f'(y_1 ), - \cos \theta , - \sin \theta \Big)^\ast,$$
so that $$(y\times \n)(y_1,\theta)=\frac{(y_1 +f(y_1 )f'(y_1 ))}{\sqrt{1+(f'(y_1 ))^{2}}}\Big(0, \sin \theta ,- \cos \theta \Big)^\ast .$$
It follows that $(y\times \n) \cdot e_{1}=0$. Replacing in (\ref{eq. of r}), we obtain  
$$J_{1}\dot r_{1}=(J_0 \dot r) \cdot e_{1}=-(r\times J_0r) \cdot e_{1}=J_{2}r_{2}r_{3}-J_{2}r_{2}r_{3}=0,$$
which indicates that the angular velocity $r_{1}$ is not controllable.

\begin{figure}[h] 
\begin{center}
\includegraphics[width=.5\textwidth]
{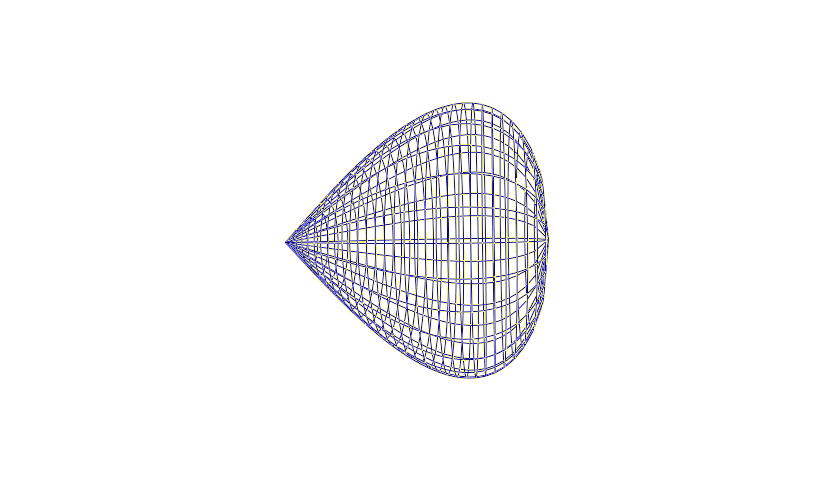} 
\caption{A solid of revolution.}
\label{fig01:revolution}
\end{center}
\end{figure}

\subsection{Ellipsoidal vehicle.}

We assume here that the vehicle fills the ellipsoid
\be
\S =\Big\{ y\in \R^3;\ \  (y_1/c_1)^2 + (y_2/c_2)^2 +(y_3/c_3)^2 \le 1\}
\label{WW1}
\ee
where  $c_1>c_2>c_3>0$ denote some numbers. Our first aim is to compute explicitly the functions $\phi _i$ and $\varphi _i$ for $i=1,2,3$, which solve
\eqref{eq for phi}-\eqref{eq for phi:02} for
$$\Omega =\Big\{ y\in \R^3;\ \  (y_1/c_1)^2 + (y_2/c_2)^2 +(y_3/c_3)^2 > 1\} .$$

\subsubsection{Computations of the functions $\phi_i$ and $\varphi_i$.}

We follow closely \cite[pp.148-155]{Lamb}. We introduce a special system of orthogonal curvilinear coordinates, denoted by $(\lambda ,\mu ,\nu )$, which
are defined as the roots of the equation 
\be
\label{cubic}
\frac{y_1^{2}}{c_1^{2}+\theta}+\frac{y_2^{2}}{c_2^{2}+\theta}+\frac{y_3^{2}}{c_3^{2}+\theta}-1=0
\ee
viewed as a cubic in $\theta$. It is clear that \eqref{cubic} has three real roots:
$\lambda \in(-c_3^{2},+\infty)$, $\mu \in (-c_2^{2},-c_3^{2})$, and  
$\nu \in (-c_1^{2},-c_2^{2})$. %The surfaces $\lambda, \mu,\nu$ are therefore ellipsoids, hyperboloids of one sheet, and hyperboloids of two sheets, respectively.

It follows immediately from the above definition of $\lambda,\mu,\nu$, that 
$$\frac{y_1^{2}}{c_1^{2}+\theta}+\frac{y_2^{2}}{c_2^{2}+\theta}+\frac{y_3^{2}}{c_3^{2}+\theta}-1
=\frac{(\lambda-\theta)(\mu-\theta)(\nu-\theta)}{(c_1^{2}+\theta)(c_2^{2}+\theta)(c_3^{2}+\theta)}.$$ 
This yields
\beq\begin{array}{ccccc}
y_1^{2}&=&\D\frac{(c_1^{2}+\lambda)(c_1^{2}+\mu)(c_1^{2}+\nu)}{(c_2^{2}-c_1^2)(c_3^2-c_1^2)},
& \;\;\;&\D\partial_\lambda y_1=\D\frac{1}{2}\frac{y_1}{(c_1^2+\lambda)},\\ \\
y_2^{2}&=&\D\frac{(c_2^{2}+\lambda)(c_2^{2}+\mu)(c_2^{2}+\nu)}{(c_1^{2}-c_2^2)(c_3^2-c_2^2)},
& \;\;\;&\D\partial_\lambda y_2=\D\frac{1}{2}\frac{y_2}{(c_2^2+\lambda)},\\ \\
y_3^{2}&=&\D\frac{(c_3^{2}+\lambda)(c_3^{2}+\mu)(c_3^{2}+\nu)}{(c_1^{2}-c_3^2)(c_2^2-c_3^2)},
& \;\;\;&\D\partial_\lambda y_3=\D\frac{1}{2}\frac{y_3}{(c_3^2+\lambda)}.
\end{array}\eeq
We introduce the scale factors
\beq\begin{array}{ccc}
h_{\lambda}&=&\D\frac{1}{2}\sqrt{\frac{(\lambda-\mu)(\lambda-\nu)}{(\lambda+c_1^2)(\lambda+c_2^2)(\lambda+c_3^2)}},\\ \\
h_{\mu}&=&\D\frac{1}{2}\sqrt{\frac{(\mu-\nu)(\mu-\lambda)}{(\mu+c_1^2)(\mu+c_2^2)(\mu+c_3^2)}},\\ \\
h_{\nu}&=&\D\frac{1}{2}\sqrt{\frac{(\nu-\lambda)(\nu-\mu)}{(\nu+c_1^2)(\nu+c_2^2)(\nu+c_3^2)}},\\ \\
\end{array}\eeq
and the function  $$f(\lambda )=\sqrt{(\lambda +c_1^2)(\lambda +c_2^2)(\lambda +c_3^2)}.$$ 
If $\xi$ is any smooth function of $\lambda$, then its Laplacian is given by 
\beq\label{laplacian of lambda}
\Delta\xi=\frac{4}{(\lambda-\mu)(\lambda-\nu)}f(\lambda)\partial_\lambda(f(\lambda)\partial_\lambda \xi).
\eeq
according to \cite[(7) p. 150]{Lamb}.
We search $\phi _i$ in the form  $\phi_i(y_1,y_2,y_3)=y_i\xi _i(y_1,y_2,y_3)$. Then
\beq\label{desc of phi}
0=\Delta \phi _i=y_i\Delta \xi _i+2\partial_i\xi_i.
\eeq
Assuming furthermore that  $\xi_i$ depends only on $\lambda$, we obtain that
\be
\label{W1}
\frac{2\partial_i\xi_i}{y_i}=\frac{2\partial_\lambda y_i}{y_i}\frac{\partial_\lambda\xi_i}{h_\lambda^2}=\frac{1}{c_i^2+\lambda}\frac{\partial_\lambda\xi_i}{h_\lambda^2}=\frac{4f^2(\lambda)}{c_i^2+\lambda}\frac{\partial_\lambda\xi_i}{(\lambda-\mu)(\lambda-\nu)}. 
\ee
%where $c_1=a, c_2=b,c_3=c$,
Combining \eqref{desc of phi} with \eqref{laplacian of lambda} and \eqref{W1}, we arrive to
$$0=\partial_\lambda(f(\lambda)\partial_\lambda\xi_i)+\frac{1}{c_i^2+\lambda}f(\lambda)\partial_\lambda\xi_i,$$ 
which is readily integrated as 
$$ \xi_i=-\hat C_i\int\limits_\lambda^{+\infty}\frac{ds}{(c_i^2+s)f(s)} +\hat C.$$

We choose the constant $\hat C=0$ for \eqref{eq for phi:02}
to be fulfilled. As $\partial \Omega$ is represented by the equation $\lambda=0$,  
then \eqref{eq for phi:01} reads
$$\partial_{\n}\phi_i=\n_i\Leftrightarrow \xi_i \frac{\partial_\lambda y_i}{y_i}+\partial_\lambda\xi_i=\frac{\partial_\lambda y_i}{y_i}.$$
We infer that $\hat C_i =c_1c_2c_3 /(2-\alpha_i),$ where 
$$\alpha_i= c_1c_2c_3\int\limits_0^{+\infty}\frac{ds}{(c_i^2+s)f(s)}.$$
It is easy seen that 
$$\frac{2c_2c_3}{3c_1^2}\leq \alpha_i\leq \frac{2c_1c_2}{3c_3^2}.$$ 
It follows that if $c_1,c_2,c_3$ are sufficiently close, then $\alpha_i$ is different from $2$, so that  $\hat C_i$ is well defined. We conclude that
\beq\label{form of phi}
\phi_i(y)=-\frac{\alpha_i}{2-\alpha_i}y_i,\;\;\forall y\in\partial\Omega.
\eeq

Let us now proceed to the computation of $\varphi _i$.
%For the solutions of
%\ba
%\Delta\varphi_i&=&0, \textrm{ in } \Omega, \label{syst4}\\
%\D\frac{\partial \varphi_i}{\partial n}&=&(y\times \n)_i, \textrm{ on }\partial \Omega,\label{syst5}\\
 %\D\lim\limits_{|y|\to+\infty}\nabla \varphi _i(y)&=&0, \label{syst6}
%\ea
We search $\varphi _i$ in the form  $\varphi _i(y)=\frac{y_1y_2y_3}{y_i}\xi_i(y)$, where $\xi_i$ depends only on $\lambda$.  We obtain 
$$\Delta \xi_i +2\sum_{j=1,j\neq i}^3\frac{\partial_{y_j}\xi_i}{y_j}=0
\Leftrightarrow \partial_\lambda (f(\lambda)\partial_\lambda \xi_i)
+\Big(\sum_{j=1,j\neq i}^3\frac{1}{(c^2_j+\lambda)}\Big)f(\lambda)\partial_\lambda\xi_i=0,$$
and hence
$$  \xi_i=-\tilde C_i\int\limits_\lambda^{+\infty}\frac{c_i^2+s}{f^3(s)}ds.$$
From \eqref{eq for phi:01}-\eqref{eq for phi:02}, we infer that
$$\begin{array}{ccc}
\D \tilde C_1=\frac{c_1c_2c_3(c_2^2-c_3^2)}{2-\beta_1},& \;\;\;&\D  \beta_1=c_1c_2c_3(c_2^2+c_3^2)\int\limits^{+\infty}_0\frac{ds}{(c_2^2+s)(c_3^2+s)f(s)},\\ \\ 
\D \tilde C_2=\frac{c_1c_2c_3(c_3^2-c_1^2)}{2-\beta_2},& \;\;\;& \D \beta_2=c_1c_2c_3(c_3^2+c_1^2)\int\limits^{+\infty}_0\frac{ds}{(c_3^2+s)(c_1^2+s)f(s)},\\ \\
\D \tilde C_3=\frac{c_1c_2c_3(c_1^2-c_2^2)}{2-\beta_3},& \;\;\;&\D  \beta_3=c_1c_2c_3(c_1^2+c_2^2)\int\limits^{+\infty}_0\frac{ds}{(c_1^2+s)(c_2^2+s)f(s)}.
\end{array}
$$
Note that at the limit case  $c_1=c_2=c_3$, we obtain $\beta_1=\beta_2=\beta_3=4/5$.
 Therefore, if $c_1,c_2$ and $c_3$ are near but different, then $\beta_i$ is different  from $2$, and therefore $\tilde C_i$ is well defined. We conclude that
\beq\label{form of varphi}
 \varphi_i=-\left(\tilde C_i \int\limits_0^{+\infty}\frac{c_i^2+s}{f^3(s)}ds\right)\frac{y_1y_2y_3}{y_i},\;\forall y \in\partial\Omega .
\eeq

\subsubsection{Controllability of the ellipsoid with six controls}
Assume still that $\S$ is given by \eqref{WW1}. Note that $\S$ is symmetric with respect to the plane $\{y_p=0\}$ for $p=1,2,3$.  
Assume given six functions $\chi_j$, $j=1,\ldots , 6$, each being  symmetric with respect to the plane $\{y_p=0\}$ for $p = 1,2,3$, with
\ba
\varepsilon_{\chi_1}^p=\left\{
\begin{array}{cc}
-1& p=1\\
1 & p=2\\
1 & p=3
\end{array}\right.
,
&\varepsilon_{\chi_2}^p=\left\{
\begin{array}{cc}
1 & p=1\\
-1 & p=2 \\
1 & p=3
\end{array}\right.
,
&\varepsilon_{\chi_3}^p=\left\{
\begin{array}{cc}
1 & p=1\\
1 & p=2 \\
-1 & p=3
\end{array}\right., \nonumber\\
\varepsilon_{\chi_4}^p=\left\{
\begin{array}{cc}
1 & p=1\\
-1 & p=2 \\
-1 & p=3
\end{array}\right.
,
&\varepsilon_{\chi_5}^p=\left\{
\begin{array}{cc}
-1 & p=1\\
1  & p=2\\
-1 & p=3
\end{array}\right.
,
&\varepsilon_{\chi_6}^p=\left\{
\begin{array}{cc}
-1 & p=1\\
-1 & p=2 \\
1 & p=3
\end{array}\right. .\label{WW2}
\ea
To obtain this kind of controls in practice,  we can proceed as follows:
\begin{itemize}
\item We build six tunnels in the rigid body, as drawn in  Figure \ref{fig11}.
%%%%%%%%%%%%%%%%%%%%%%%%%%%%%%%%%%%%%%%%%%%%%%%%%%%%%%%%%%%%%%%%%%%%%%
\begin{figure}[h] 
\begin{center}
\includegraphics[width=.5\textwidth]
{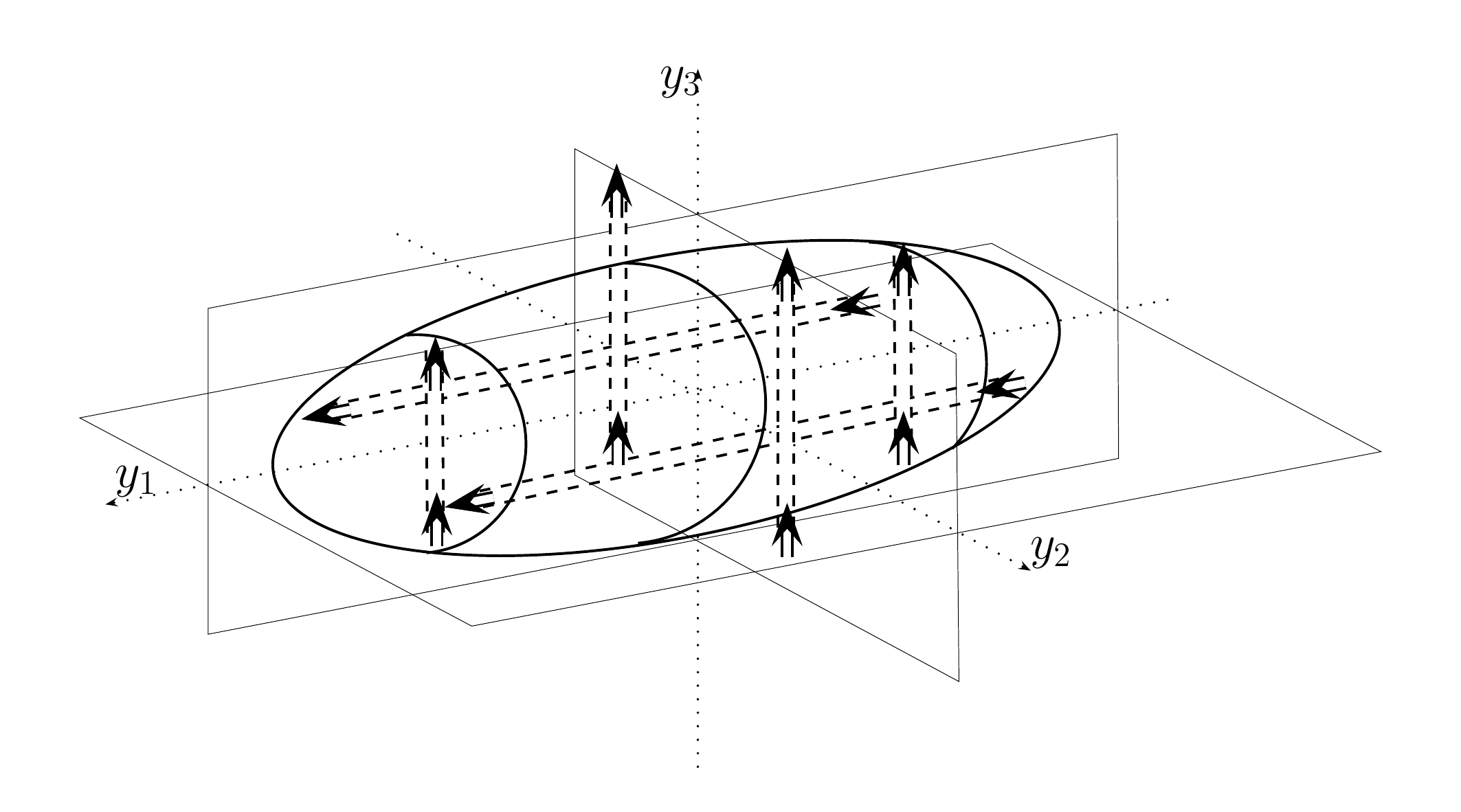} 
\caption{Ellipsoid with six controls.}
\label{fig11}
\end{center}
\end{figure}
%%%%%%%%%%%%%%%%%%%%%%%%%%%%%%%%%%%%%%%%%%%%%%%%%%%%%%%%%%%%%%%%%%%%%%
\item We divide the six tunnels in three groups of two parallel tunnels; that is, we put together the tunnels located in the same plane
(see Figure \ref{fig12}).
%%%%%%%%%%%%%%%%%%%%%%%%%%%%%%%%%%%%%%%%%%%%%%%%%%%%%%%%%%%%%%%%%%%%%%
\begin{figure}
%\hfill
\begin{minipage}[t]{.45\textwidth}
\begin{center}
\includegraphics[width=1\textwidth]{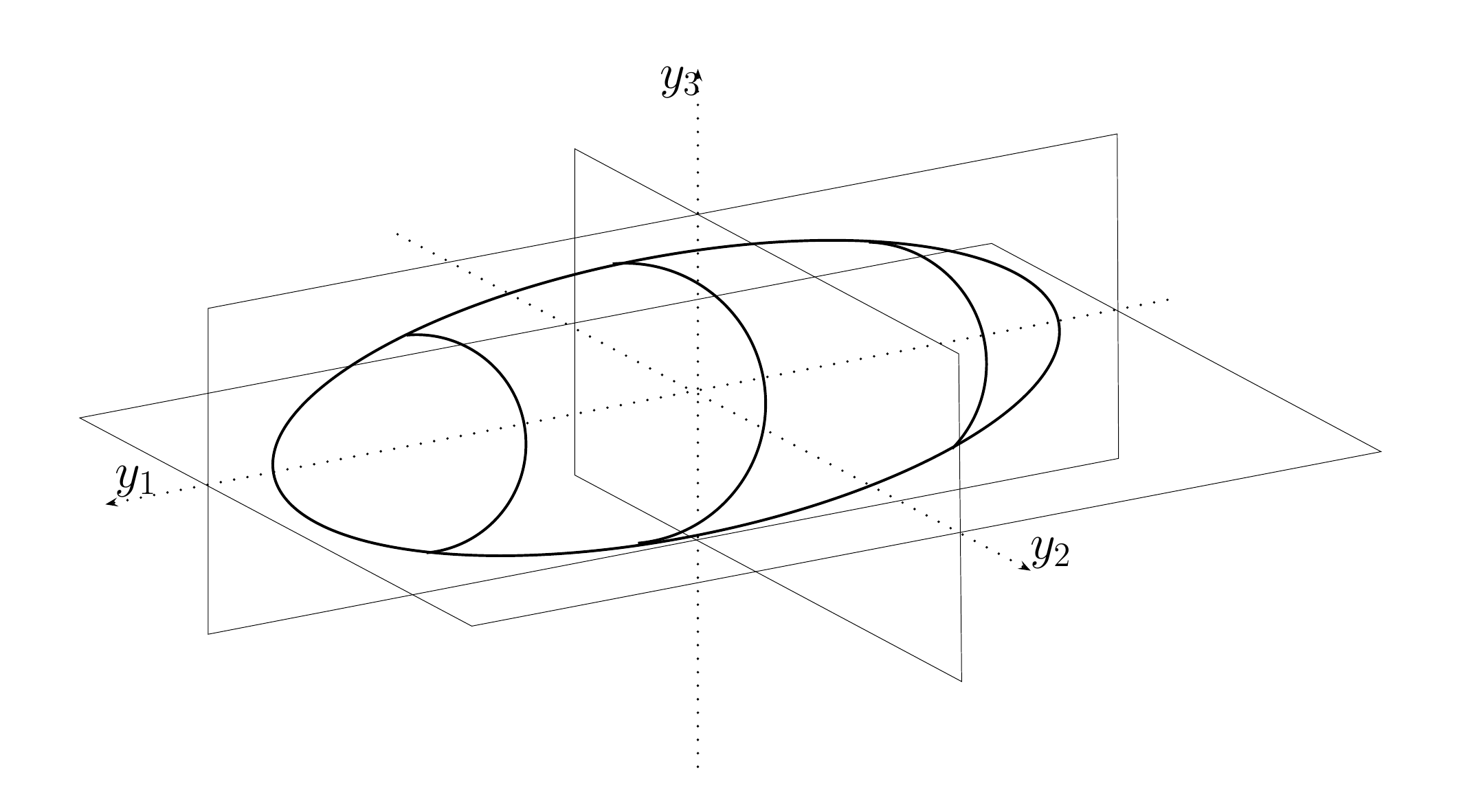} 
\end{center}
\end{minipage}
%\hfill
\begin{minipage}[t]{.45\textwidth}
\begin{center}
\includegraphics[width=1\textwidth]{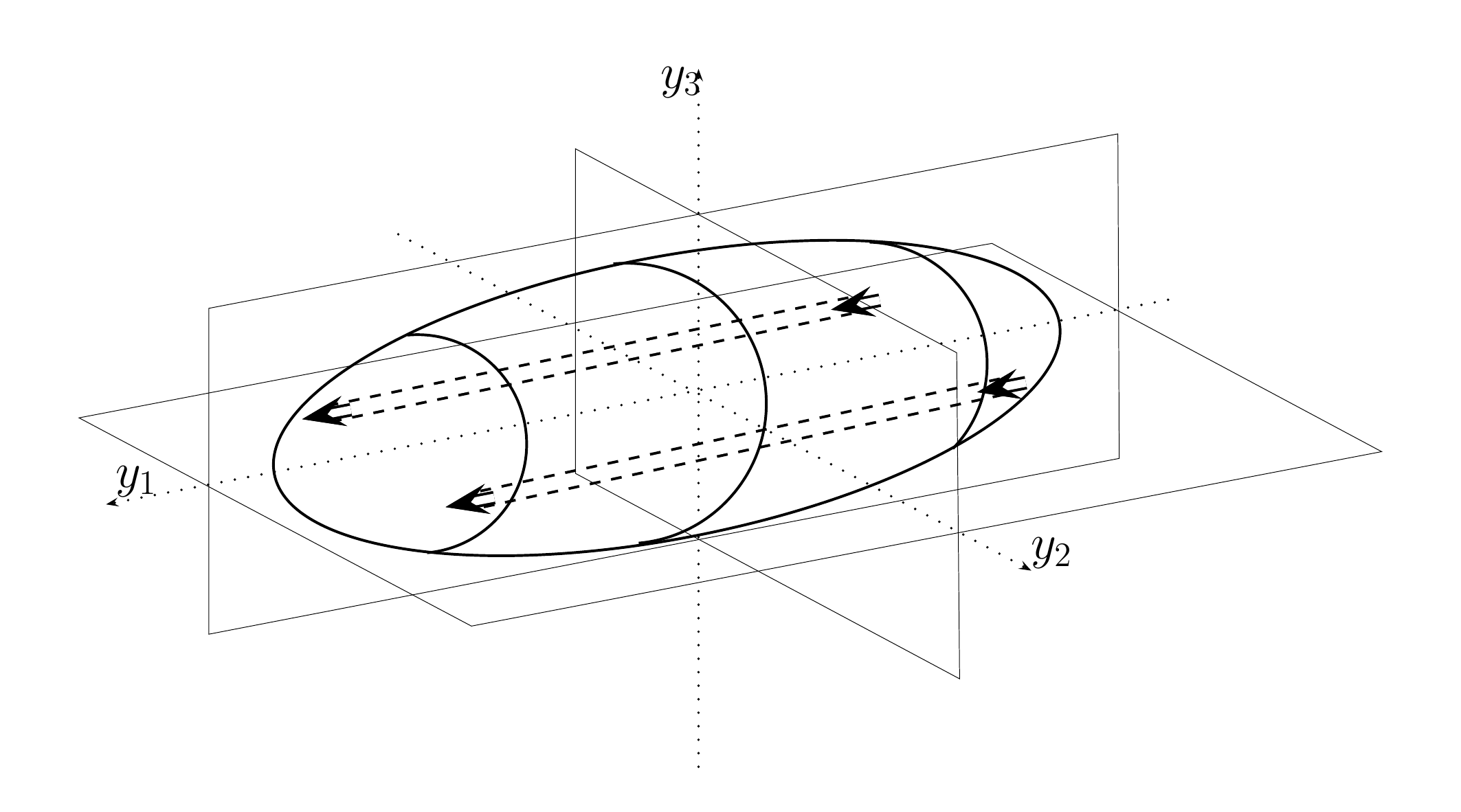} 
\end{center}
\end{minipage}
%\hfill
\begin{minipage}[t]{.45\textwidth}
\begin{center}
\includegraphics[width=1\textwidth]{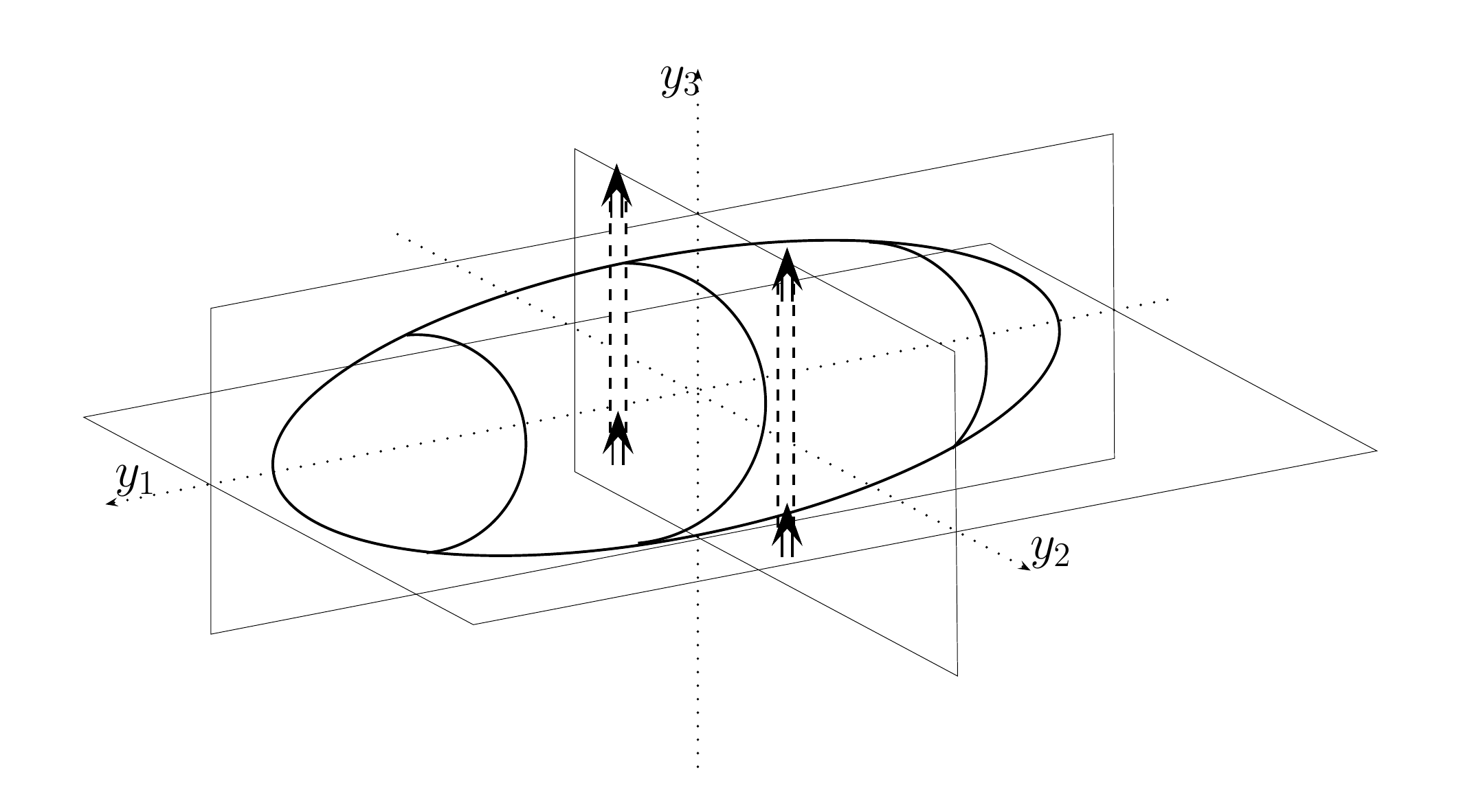} 
\end{center}
\end{minipage}
%\hfill
\begin{minipage}[t]{.45\textwidth}
\begin{center}
\includegraphics[width=1\textwidth]{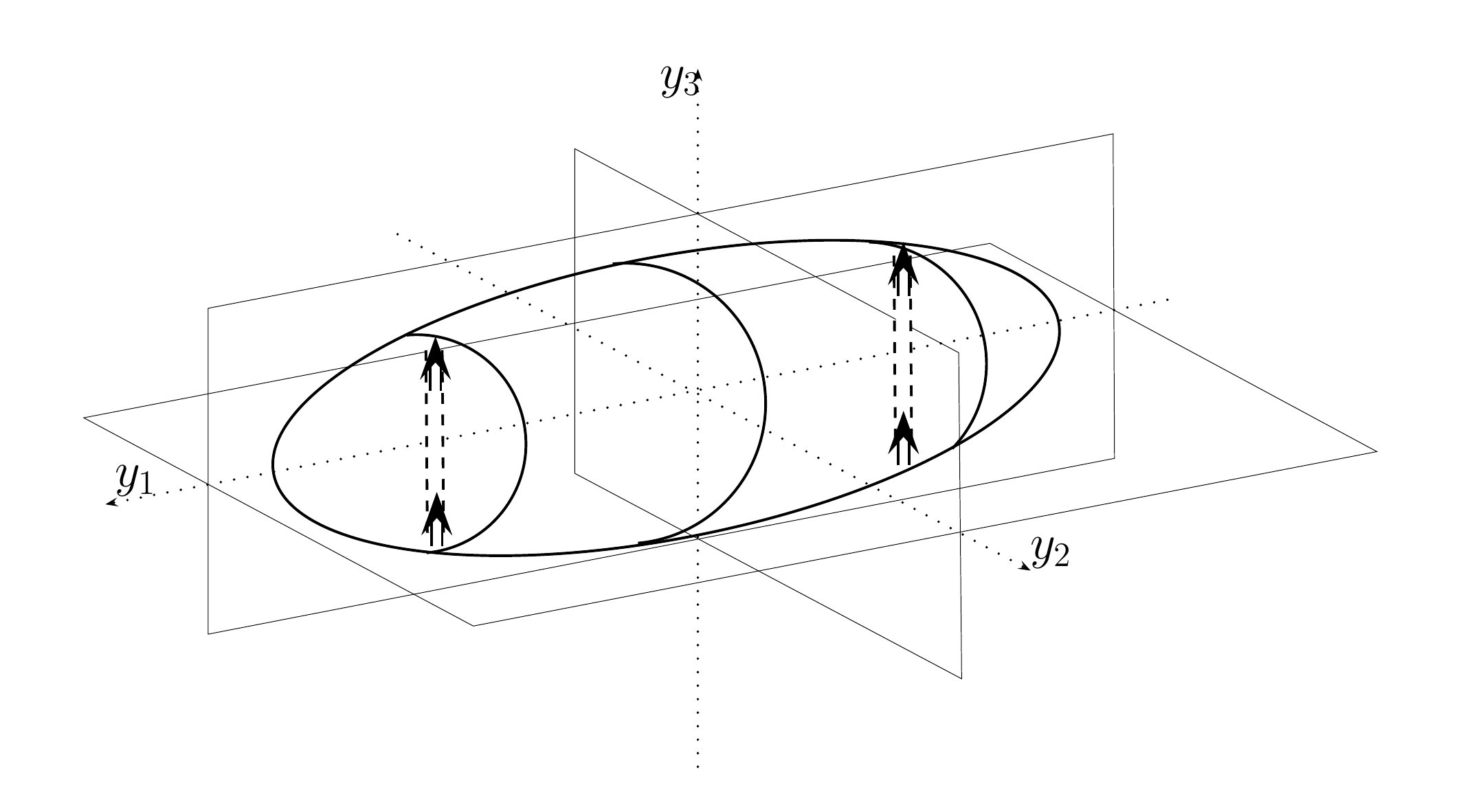} 
\end{center}
\end{minipage}
\caption{Independent controls in each plane.}
\label{fig12}
\end{figure}
%%%%%%%%%%%%%%%%%%%%%%%%%%%%%%%%%%%%%%%%%%%%%%%%%%%%%%%%%%%%%%%%%%%%%%%%%%%%%
\item  Let $\tilde w_1$ and $\tilde w_2$  denote the effective flow controls in the two tunnels located  in the plane $\{ y_3=0 \}$. They may 
appear together in \eqref{boundary condition} as
$\tilde w_1\chi(y_1,y_2,y_3) + \tilde w_2\chi(y_1,-y_2,y_3)$, where $\chi \in C^\infty (\partial \Omega)$ is some function with
 \[
 \textrm{Supp } \chi  \subset \{ y_2 >0 \} , \quad \varepsilon _{\chi}^1= -1, \quad \textrm{and} \quad \varepsilon _{\chi}^3=1. 
 \] 
 We introduce the (new) support functions 
 \begin{eqnarray*}
 \chi_1(y_1,y_2,y_3) &=& \chi (y_1,y_2,y_3) + \chi (y_1,-y_2,y_3),\\
 \chi_6(y_1,y_2,y_3) &=& \chi (y_1,y_2,y_3)  - \chi (y_1,-y_2,y_3)
 \end{eqnarray*}
 and the (new) control inputs
 \begin{eqnarray*}
 w_1&=&\frac{\tilde w_1+\tilde w_2}{2}, \\
 w_6&=&\frac{\tilde w_1-\tilde w_2}{2}.
 \end{eqnarray*}
 (See Figure \ref{fig13}.) 
 Then \eqref{WW2} is satisfied for $\chi_1$ and $\chi _6$, and 
 \[
 \tilde w_1\chi(y_1,y_2,y_3) + \tilde w_2\chi(y_1,-y_2,y_3) = w_1 \chi_1(y_1,y_2,y_3) + w_6 \chi _6(y_1,y_2,y_3).
 \]
The same can be done in the other planes $\{y_1 =0 \}$ and $\{ y_2=0 \}$. 
%%%%%%%%%%%%%%%%%%%%%%%%%%%%%%%%%%%%%%%%%%%%%%%%%%%%%%%%%%%%%%%%%%%%%%%%%%%%%
\begin{figure}[h] 
\begin{center}
\includegraphics[width=.9\textwidth]
{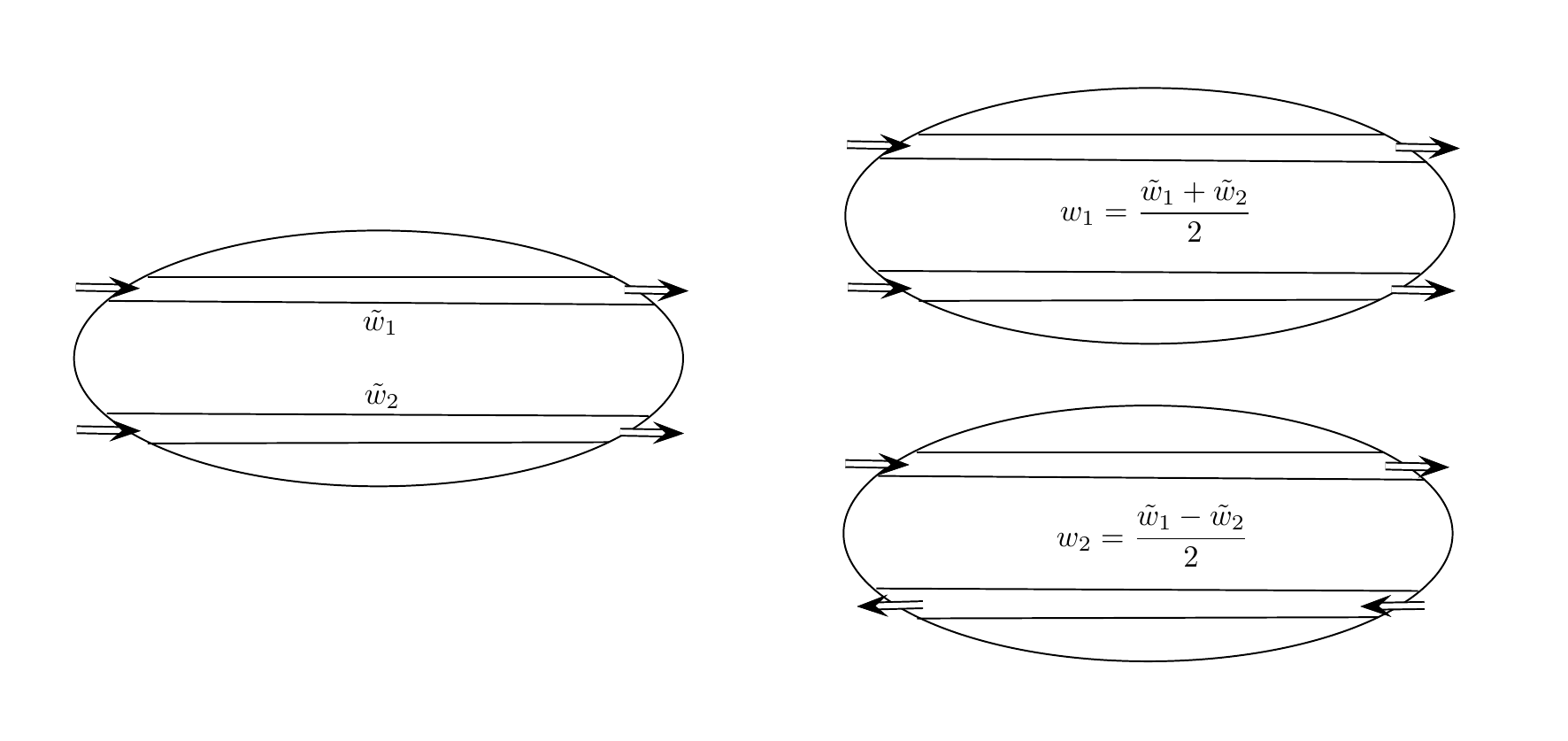} 
\caption{Definition of the new controls in the plane $\{ y_3=0 \}$. }
\label{fig13}
\end{center}
\end{figure}
%%%%%%%%%%%%%%%%%%%%%%%%%%%%%%%%%%%%%%%%%%%%%%%%%%%%%%%%%%%%%%%%%%%%%%%%%%%%
 %\begin{figure}[h] 
%\begin{center}
%\includegraphics[width=.9\textwidth]
%{} 
%\caption{Scheme the transformation of  the controls in each plane}
%\label{fig03:sixControls}
%\end{center}
%\end{figure}
\end{itemize}

We notice that  $C$ is a diagonal matrix:
\[ C=-\textrm{diag}(C_1,C_2,C_3,C_4,C_5,C_6),\]
with
$$C_i=\Bi{\phi_i\chi_i},\; i=1,2,3, \quad \text{and} \quad C_{i+3}=\Bi{\varphi_{i}\chi_{i+3}},\; i=1,2,3.$$
From \eqref{form of phi}-\eqref{form of varphi}, there are some constants $\bar C_i \neq 0$, $i=1,...,6$, which depend only on $c_1,c_2$ and $c_3$, 
such that
\be
C_i=\bar C_i \int\limits_{\partial\Omega \cap (0,+\infty )^3 }{y_i \chi _i}(y),\;\;\;\;C_{i+3}=\bar C_{i+3} \int\limits_{\partial\Omega\cap (0,+\infty ) ^3}
\left(\frac{y_1y_2y_3}{y_{i}}\right) \chi_{i+3}(y),\;\;\; i=1,2,3.
\label{WW4}
\ee

By \eqref{WW4}, we have that $C_i\ne 0$ for $i=1,...,6$, and hence rank$(C)=6$
if, in addition  to \eqref{WW2}, it holds
\ba
\label{WW5}
&&\chi_i \not\equiv 0,\ i=1,...,6,\\
\label{WW6}
&&\chi _i \ge 0 \text{ on } \partial \Omega  \cap (0,+\infty )^3 , \ i=1,...,6.
\ea
By Proposition \ref{prop4} and Theorem \ref{thm1}, it follows that both the linearized system (\ref{lineal system pq}) and the nonlinear system 
\eqref{systempq} are (locally) controllable.
\begin{remark}
Since $\varepsilon _{\chi _1}^1=(-1)^{\delta _{11}} = -1$, we have that $(L_1^M)_{11}=(W_1^M)_{11}=0$, and hence $\beta=\gamma =0$. 
Thus $\gamma + \alpha\beta=0$. Proceeding as in \cite[Theorem 2.2]{GR}, one can prove that, under certain rank conditions, two arbitrary states 
of the form $(h,\vec q, 0,0)$ can be connected by trajectories of the ellipsoid in (sufficiently) large time.  
\end{remark}

In the following sections, we shall be concerned with the controllability of the ellipsoid with less controls (namely, $4$ controls  and $3$ controls). 
If, in the pair $(\chi _1, \chi _6)$,  only $\chi _6$ is available, then $\chi _6$ can be generated as above by two propellers controlled  in the same way
(Figure \ref{fig30} left), or by only one propeller by choosing an appropriate scheme for the tunnels (Figure \ref{fig30} middle). 
In what follows, to indicate that the flows in the two tunnels are {\em linked}, we draw a transversal line in bold between the two tunnels (Figure \ref{fig30} right). 

\begin{figure}[h] 
\begin{center}
\includegraphics[width=.9\textwidth]
{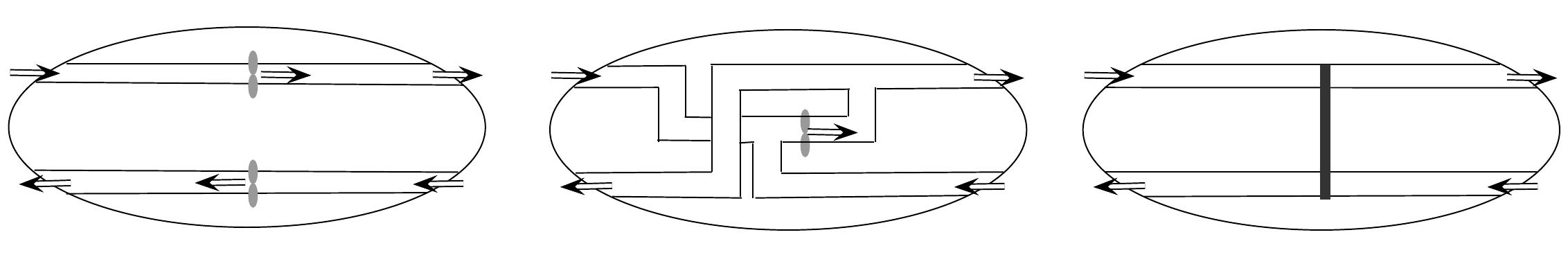} 
\caption{Two ways to generate $\chi_6$.}
\label{fig30}
\end{center}
\end{figure}

\subsubsection{Controllability of the ellipsoid with four controls}

We consider the same controllers $\chi_1,\chi_4,\chi_5$ and $\chi_6$ as above, still satisfying \eqref{WW2}, \eqref{WW5}, \eqref{WW6}.
(See Figure \ref{fig20}.)  
% but we assume all symmetries, that is 
%\beq\begin{array}{cc}
%\varepsilon_{\chi_1}^p=\left\{
%\begin{array}{cc}
%-1 & p=1\\
%1 & p=2\\
%1 & p=3
%\end{array}\right.
%,\;\;\;&
%\varepsilon_{\chi_4}^p=\left\{
%\begin{array}{cc}
%1 &p=1\\
%-1 & p=2 \\
%-1 & p=3
%\end{array}\right.,\\ \\
%\varepsilon_{\chi_5}^p=\left\{
%\begin{array}{cc}
%-1 & p=1\\
%1 &p=2\\
%-1 & p=3
%\end{array}\right.
%,\;\;\;&
%\varepsilon_{\chi_6}^p=\left\{
%\begin{array}{cc}
%-1 & p=1\\
%-1 & p=2 \\
%1 & p=3
%\end{array}\right., \\ \\
%\end{array}\eeq
%%%%%%%%%%%%%%%%%%%%%%%%%%%%%%%%%%%%%%%%%%%%%%%%%%%%%%%%%%%%%%%%%
\begin{figure}
%\hfill
\begin{minipage}[t]{.45\textwidth}
\begin{center}
Control $\chi_1$
\includegraphics[width=1\textwidth]{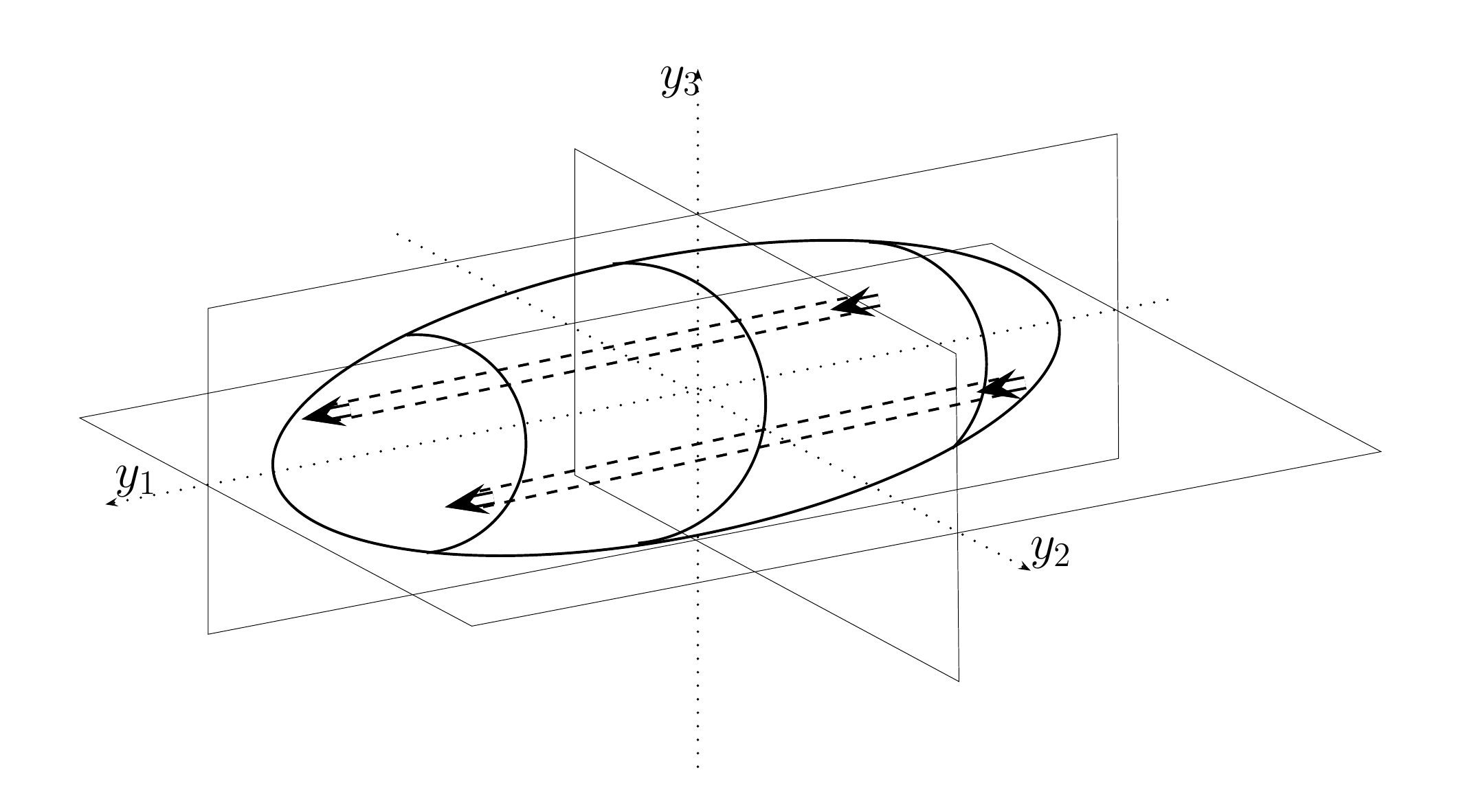} 
\end{center}
\end{minipage}
%\hfill
\begin{minipage}[t]{.45\textwidth}
\begin{center}
Control $\chi_4$
\includegraphics[width=1\textwidth]{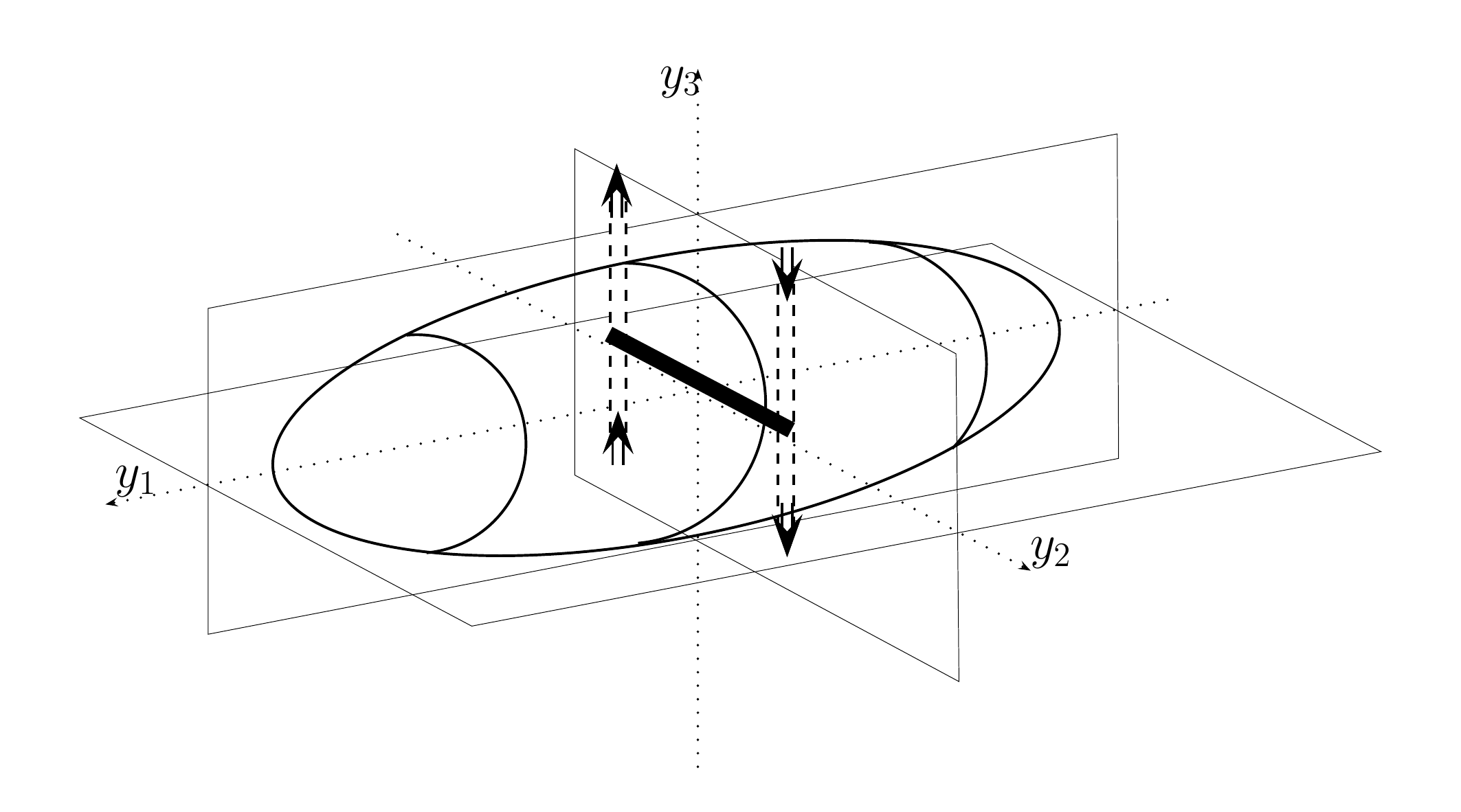} 
\end{center}
\end{minipage}
%\hfill
\begin{minipage}[t]{.45\textwidth}
\begin{center}
Control $\chi_5$
\includegraphics[width=1\textwidth]{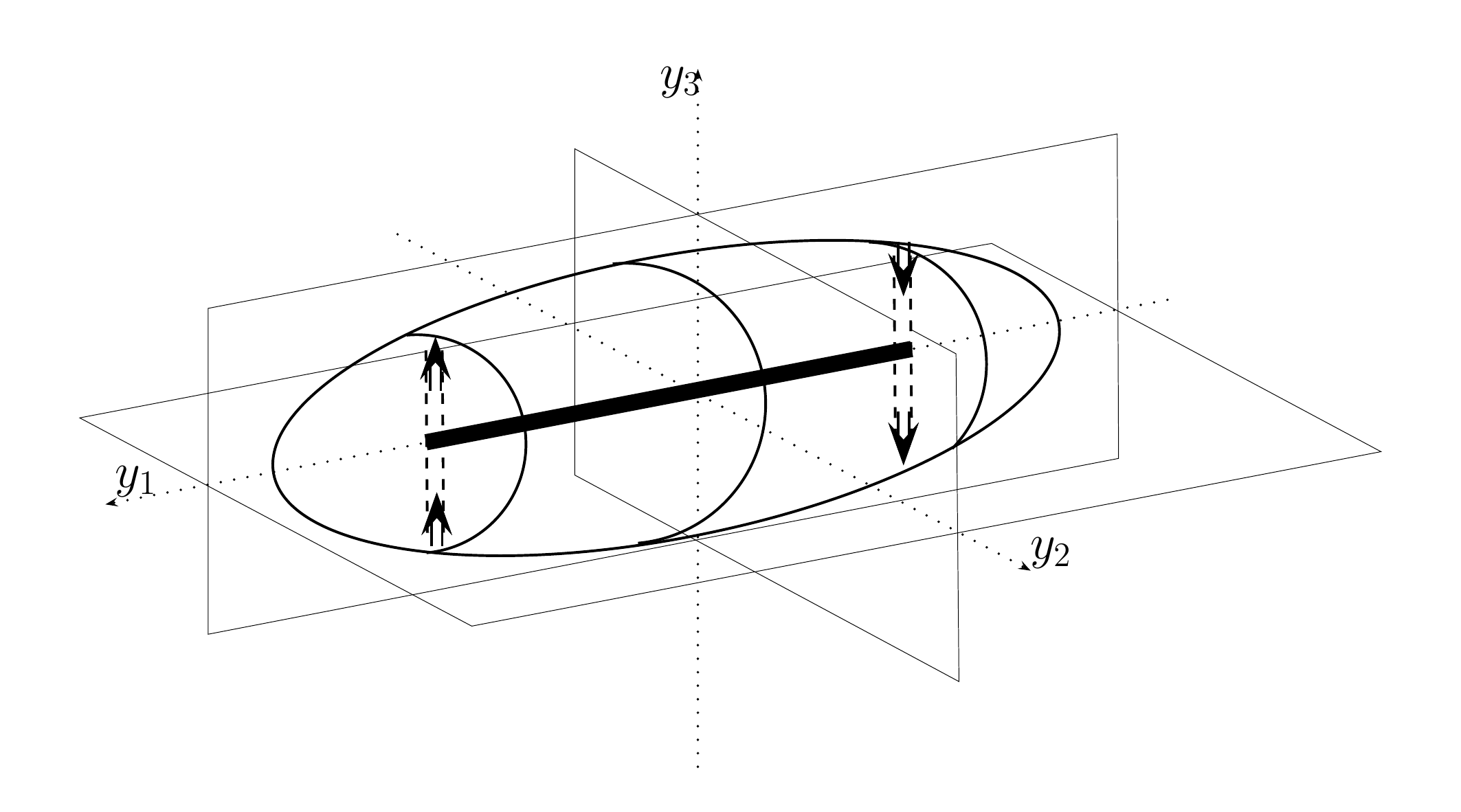} 
\end{center}
\end{minipage}
%\hfill
\begin{minipage}[t]{.45\textwidth}
\begin{center}
Control $\chi_6$
\includegraphics[width=1\textwidth]{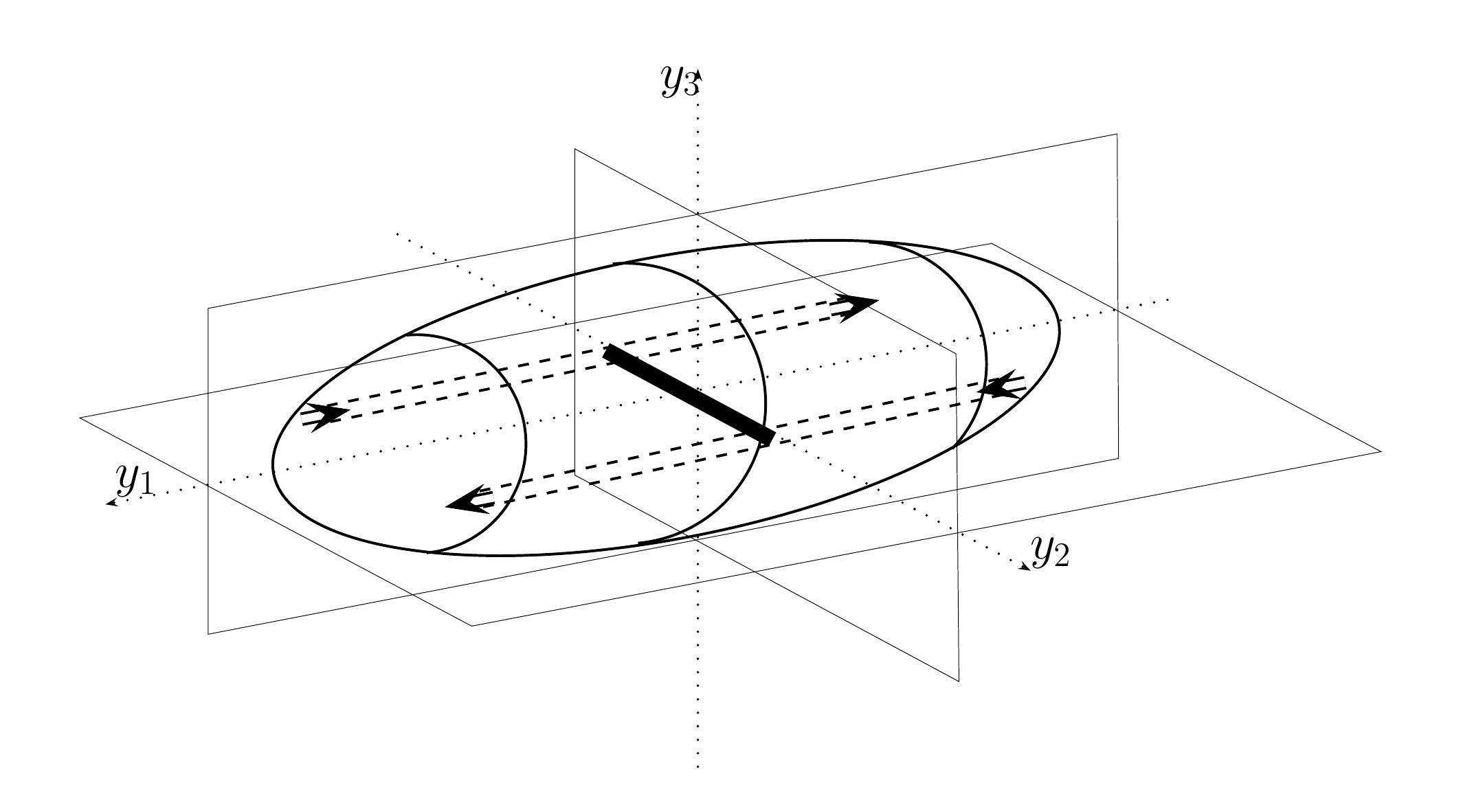} 
\end{center}
\end{minipage}
\caption{Ellipsoid with four controls.}
\label{fig20}
\end{figure}
%%%%%%%%%%%%%%%%%%%%%%%%%%%%%%%%%%%%%%%%%%%%%%%%%%%%%%%%%%%%%%%%%%%%%%%
If the density $\rho$ is scaled by a factor $\lambda$, i.e. $\rho (x)$ is replaced by $\rho ^\lambda (x)=\lambda \rho (x)$ where $\lambda >0$, 
then the mass and the inertia matrix are scaled  in the same way; that is, $m_0$ and $J_0$ are replaced by
\[
m_0^\lambda = \lambda m_0,\quad J_0^\lambda = \lambda J_0.
\] 
Thus, if $\lambda \to \infty$, then $m_0^\lambda \to \infty $, $[J_0^\lambda]^{-1}  \to 0$, and $[ {\mathcal J}^\lambda ]^{-1}\to 0$.
(Note that large values of $\lambda$ are not compatible with the neutral buoyancy, but they prove to be useful to identify geometric configurations
leading to controllability results with less than six control inputs.)

Note that the matrices $M,J,N,C^M,C^J,L_p^M,R_p^M,W_p^M,L_p^J,R_p^J,W_p^J$ keep constant when $\lambda \to \infty$.
In particular,
\[
\lim_{\lambda \to \infty} {\bf A} {\mathcal J}^{-1} C=0,\quad \lim \frac{1}{2} {\mathcal J} {\bf D} {\mathcal J}^{-1} C=0.
\]

Let $B^\infty=\lim\limits_{\lambda \to \infty}{\bf B}$. Then $B^\infty$ and $C$ are given by
$$
\begin{array}{cc}
B^\infty=-
\left(\begin{array}{cccc}
0&0&0&0\\
0&0&0&B_6\\
0&0&B_5&0\\
0&0&0&0\\
0&0&0&0\\
0&0&0&0
\end{array}\right),
&C=-
\left(\begin{array}{cccc}
C_1&0&0&0\\
0&0&0&0\\
0&0&0&0\\
0&C_4&0&0\\
0&0&C_5&0\\
0&0&0&C_6
\end{array}\right), 
\end{array}
$$
with 
$$B_5=\Bi\left(\nabla\psi_1\cdot \nabla\psi_5\right)\n_3,\;\;\;B_6=\Bi\left(\nabla\psi_1\cdot \nabla\psi_6\right)\n_2.$$
Thus, if $B_5\ne 0$ and $B_6\ne 0$, we see that \eqref{cond rank 1} and \eqref{cond rank 2} are fulfilled, so that 
the local controllability of \eqref{systempq} is ensured by Corollary \ref{cor1} for $\lambda $ large enough.
We note then that  the matrix in $\R ^{6\times 6}$ obtained by gathering together the four columns of $C$ and the last  two columns of $B^\infty $ is invertible.
Let  $R_1(\lambda)\in \R ^{6\times 6}$ (resp.  $R_2(\lambda)\in \R ^{6\times 6}$) denote the matrix obtained by gathering  together the four columns of 
$C$ with the last two columns of ${\bf B} + {\bf A } {\mathcal J} ^{-1}C$ (resp. with the  last two   columns of 
$ \frac{1}{2} {\mathcal J} {\bf D} {\mathcal J}^{-1} C + {\bf B} + {\bf A } {\mathcal J} ^{-1}C$). Then for $\lambda \gg 1$, we have 
\[
\textrm{det }R_1(\lambda )\ne 0\quad\textrm { and }\quad   \textrm{det }R_2(\lambda )\ne 0.
\]
Since the coefficients of $R_1(\lambda ),R_2(\lambda )$ are rational functions of $\lambda$, 
we infer that the equation
\[
\textrm{det }R_1(\lambda )\cdot   \textrm{det }R_2(\lambda )  =0
\] 
is an {\em algebraic} equation in $\lambda$.  Therefore, it has at most a {\em finite} set of roots in $(0,+\infty )$, 
 that we denote by $\Lambda _{critical}$. 
 We conclude that for any $\lambda \in (0,+\infty) \setminus\Lambda_{critical}$, the local controllability of   \eqref{systempq} still holds.
 In particular, we can consider values of $\lambda$ arbitrary close to the value $\lambda=1$ imposed by \eqref{buoyancy}.
 The issue whether $1\in \Lambda _{critical}$ seems hard to address without computing numerically all the coefficients in our system.

\subsubsection{Controllability of the ellipsoid with three controls}

Assume that $\chi_1, \chi _4, \chi_5$ and $\chi _6$ are as above  (satisfying \eqref{WW2}, \eqref{WW5}, \eqref{WW6}), and consider now the controls supported by 
$\chi _1$, $\chi _4$ and $\tilde \chi _5=\chi _5 + \chi _6$ (see Figure \ref{fig21}). 
\begin{figure}
%\hfill
\begin{minipage}[t]{.45\textwidth}
\begin{center}
Control $\chi_1$
\includegraphics[width=1\textwidth]{elipce_4Control1.pdf} 
\end{center}
\end{minipage}
%\hfill
\begin{minipage}[t]{.45\textwidth}
\begin{center}
Control $\chi_4$
\includegraphics[width=1\textwidth]{elipce_4Control3.pdf} 
\end{center}
\end{minipage}
%\hfill
\begin{minipage}[t]{.45\textwidth}
\begin{center}
Control $\tilde \chi_5$
\includegraphics[width=1\textwidth]{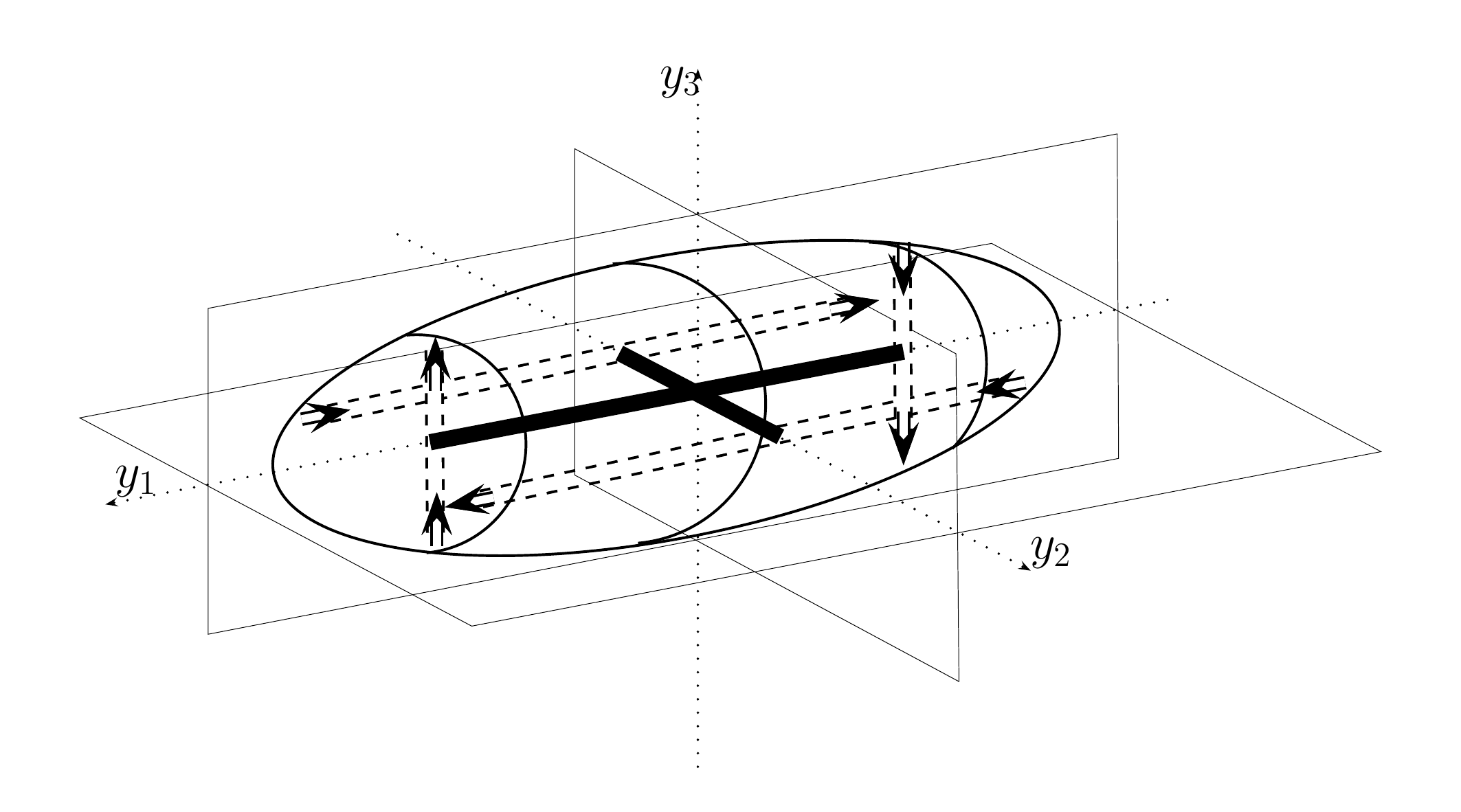} 
\end{center}
\end{minipage}
\caption{Ellipsoid with three controls.}
\label{fig21}
\end{figure}

Doing the same scaling for the density, and letting $\lambda \to \infty$,
we see that the matrices $B^\infty$ and $C$ read
$$
\begin{array}{cc}
B^\infty=-
\left(\begin{array}{cccc}
0&0&0\\
0&0&B_6\\
0&0&B_5\\
0&0&0\\
0&0&0\\
0&0&0
\end{array}\right),
&C=-
\left(\begin{array}{ccc}
C_1&0&0\\
0&0&0\\
0&0&0\\
0&C_4&0\\
0&0&C_5\\
0&0&C_6
\end{array}\right), 
\end{array}
$$
where the coefficients $B_5,B_6,C_1,C_4,C_5,C_6$ are as above.
For simplicity, we assume that the principal axes of inertia of the vehicule coincide with the axes of the ellipsoid. Then the matrix $J_0$ is diagonal 
(see \cite{Chambrion-Sigalotti}) with entries $J_1,J_2,J_3$.
Notice that the first and fourth coordinates are well controlled (using $\chi_1$ and $\chi _4$), and that the other coordinates are decoupled from them, at  least
asymptotically (i.e. when $\lambda \to \infty$). Let $A^\infty=\lim_{\lambda \to \infty}{\bf A}$ (i.e. $A^\infty$ is obtained by letting $\alpha =0$ in ${\bf A}$). 
 Let $K\in \R ^{4\times 4}$ denote the matrix obtained from $A^\infty$ by removing the first
and fourth lines (resp. columns), and let $b\in \R ^4$ (resp. $c\in \R ^4$) denote the vector obtained from the last column of $B^\infty $ (resp. $C$) 
by removing the first and fourth coordinates, namely 
\begin{eqnarray*}
&&K=
\left( 
\begin{array}{cccc}
-(L_1^M)_{22} &0&0& -(R_1^M)_{23}\\
0 & -(L_1^M)_{33} & -(R_1^M)_{32} & 0 \\
0& -((L_1^J)_{23} +(C^M)_{11}) & -(R_1^J)_{22} & 0 \\
-((L_1^J)_{32} -(C^M)_{11}) &0 &0 & -(R_1^J)_{33} 
\end{array}
\right) , \\
&&b=
\left( 
\begin{array}{cccc}
B_6\\B_5\\0\\0
\end{array}
\right) ,\qquad 
c=
\left( 
\begin{array}{cccc}
0\\0\\C_5\\C_6
\end{array}
\right).
\end{eqnarray*}
Let finally 
\[
 F=
\left( 
\begin{array}{cccc}
0 & 0 & 0 & -1\\
0 & 0 & 1& 0 \\
0& 0 & 0 & 0 \\
0 &0 &0 & 0 
\end{array}
\right) \quad \text{ and } 
G=
\left( 
\begin{array}{cccc}
m_0^{-1} & 0 & 0 & 0\\
0 & m_0^{-1} & 0& 0 \\
0& 0 & J_2^{-1} & 0 \\
0 &0 &0 & J_3^{-1} 
\end{array}
\right) .
\]
Then, keeping only the leading terms  as $\lambda \to \infty$, we see that 
\eqref{newcond1} holds if  
\be
\label{WWW1}
\textrm{rank }(c,b,KGb,(KG)^2b, (KG)^3b)=4 
\ee 
while \eqref{newcond2} holds if 
\begin{multline}
\label{WWW2}
\textrm{rank }\bigg( c,b, [ (C^M)_{11}F+ 2K ] Gb,
[ 8 (C^M)_{11}F+ 11 K ] GK G b,\\
[ 17 (C^M)_{11}F+ 64 K ]G(KG)^2b\bigg)
=4. 
\end{multline}
Note that \eqref{WWW1} is satisfied whenever 
\be
\textrm{rank} (b,KGb,(KG)^2b,(KG)^3b)=4,
\label{kalman}
\ee
which is nothing but the Kalman rank condition for the system 
$\dot x = KGx+bu$. However, it is clear that we should take advantage of the presence $c$ in \eqref{WWW1}. 
As previously, this gives a controllability result for $\lambda \gg1$, but such a result
is  also valid for all the positive $\lambda$'s except those in a finite set defined by an algebraic equation.

\section{Appendix}
\subsection{Quaternions and rotations.}
Quaternions are a convenient tool for representing rotations of objects in three dimensions. For that reason, they are
widely used in robotic, navigation, flight dynamics, etc. (See e.g. \cite{altmann,ST}). We limit ourselves to introducing the few
definitions and properties needed to deal with the dynamics of $h$ and $Q$. (We refer the reader to \cite{altmann} for more details.)

The set of quaternions, denoted by $\mathbb H$, is a noncommutative field containing  $\mathbb C$ and which is a $\mathbb R$-algebra 
of dimension 4. Any quaternion $q\in \mathbb H$ may be written as
\[
q=q_0 +q_1 i + q_2 j + q_3 k,  
\]
where $(q_0,q_1,q_2,q_3)\in \R^4$ and $i,j,k\in \mathbb H$ are some quaternions whose products will be given later. We say that
$q_0$ (resp. $q_1i+q_2j+q_3k$)  is the {\em real part} (resp. the {\em imaginary part}) of $q$. Identifying the imaginary part 
$q_1i+q_2j+q_3k$ with the vector $\vec q = (q_1,q_2,q_3)\in \R ^3$, we can represent the quaternion $q$ as 
 $q=[q_0,\vec q\, ]$, where $q_0\in \R$ (resp. $\vec q\in \R ^3$) is the {\em scalar part} (resp. the {\em vector part})  of $q$. 
The addition, scalar multiplication and quaternion multiplication are defined respectively by 
\begin{eqnarray*}
&&[p_0,\vec p\, ]+[q_0,\vec q\, ]=[p_0+q_0,\vec p + \vec q\,],\\
&&t[q_0,\vec q\, ]=[tq_0,t\vec q\, ],\\
&&[p_0,\vec p\, ] *[q_0,\vec q\, ]=[p_0q_0 -\vec p \cdot \vec q , p_0\vec q + q_0\vec p +\vec p \times \vec q\, ],
\end{eqnarray*}
where ``$\cdot$'' is the dot product and ``$\times$'' is the cross product. We stress that the quaternion multiplication $*$ is not 
commutative. Actually, we have that
\begin{eqnarray*}
&&i* j = k,\quad j*i =-k,\\
&&j*k=i,\quad k*j=-i,\\
&&k*i=j,\quad i*k=-j,\\
&&i^2=j^2=k^2=-1.
\end{eqnarray*} 
Any pure scalar $q_0$ and any pure vector $\vec q$ may be viewed as quaternions
\[
q_0 = [q_0, \vec 0 \, ], \qquad \vec q = [0, \vec q \,],
\]
and hence any quaternion $q=[q_0,\vec q\, ]$ can be written as the sum of a scalar and a vector, namely
\[
q=q_0 + \vec q.
\] 
The cross product of vectors extends to quaternions by setting
\[
p\times q = \frac{1}{2} ( p*q - q*p) = [0,\vec p \times \vec q \,].
\]
The {\em conjugate} of a quaternion $q=[q_0,\vec q \, ]$ is $q^*=[q_0, -\vec q \, ]$. The {\em norm} of $q$ is 
\[
||q|| = ( |q_0|^2 + ||\vec q \, ||^2) ^{\frac{1}{2}}.
\] 
From 
\[
q*q^*=q^* *q=||q||^2,
\]
we infer that 
\[
q^{-1}= \frac{q^*}{||q||^2}\cdot
\]
A {\em unit quaternion} is a quaternion of norm 1. The set of unit quaternions may be identified with $S^3$. 
It is a group for $*$. 

Any unit quaternion $q=[q_0,\vec q\, ]$ can be written in the form 
\be
\label{AP20}
q= \cos \frac{\alpha}{2} + \sin \frac{\alpha }{2} \vec u,
\ee
where $\alpha \in \R$ and $\vec u\in \R^3$ with $||\vec u ||=1$. Note that the writing is not unique: if the pair $(\alpha ,\vec u\, )$ 
is convenient, the same is true for the pairs $(-\alpha, -\vec u\,  )$ and $(\alpha + 4k\pi, \vec u \, )$ $(k\in \Z)$, as well. However, if we impose 
that $\alpha \in [0,2\pi ]$, then $\alpha $ is unique, and $\vec u$ is unique for $|q_0|<1$. (However, any $\vec u\in S^3$ is convenient for $|q_0|=1$.)

For any unit quaternion $q$, let the matrix $R(q)\in \R ^{3\times 3}$ be defined by 
\be
\label{AP0}
R(q){ \vec v} = q * {\vec v} * q^*\quad \forall {\vec  v}\in \R ^3. 
\ee 
Then $R(q)$ is found to be
\[
R(q) = \left( 
\begin{array}{ccc}
q_0^2 +q_1^2 -q_2^2 -q_3^2   &  2(q_1q_2-q_0q_3)                       &  2(q_1q_3+q_0q_2)                    \\
2(q_2q_1+q_0q_3)                      &  q_0^2 -q_1^2 +q_2^2 -q_3^2   &  2(q_2q_3 -q_0q_1)                    \\
2(q_3q_1-q_0q_2)                       &  2(q_3q_2 + q_0q_1)                    &  q_0^2 -q_1^2 -q_2^2 +q_3^2
\end{array}
\right) .
\]
For $q$ given by \eqref{AP20}, then $R(q)$ is the {\em rotation around the axis $\R \vec u$ of angle $\alpha$}. 

Note that $R(q_1 * q_2) = R(q_1)R(q_2)$ (i.e. $R$ is a {\em group homomorphism}), hence 
\[
R(1)=Id,\quad R(q^*)= R(q)^{-1}. 
\]   
We notice that the map $q\to R(q)$ from the unit quaternions set $S^3$ to $SO(3)$ is onto, but not one-to-one, for
$R(-q)=R(q)$. It becomes one-to-one when restricted to the open set
\[S^3_+:=\{ q=[q_0,\vec q \, ]\in {\mathbb H}; \ ||q||=1 \text{ and } q_0>0\}.\]
Furthermore, the map $R$ is a smooth invertible map from $S^3_+$ onto an open neighbourhood $\mathcal O$ of $Id$ in $SO(3)$. On the other hand, the map 
\[
\vec q\to q = [q_0,\vec q \, ]= [\sqrt{1-||\vec q \, ||^2}, \vec q\, ]
\] 
is a smooth invertible map from the unit ball $B_1(0)=\{\vec q\in \R ^3;\ ||\vec q \, || <1 \}$ onto $S^3_+$. Thus the rotations in $\mathcal O$ can be parameterized
by $\vec q \in B_1(0)$. 

\subsection{Proof of Proposition \ref{prop100}.}
Let us prove by induction on $k\in \N$ that 
\be
\label{N8}
V_{2k}^{(2l)}(T)=0 \qquad \forall l\in \N.
\ee
The property  is clearly true for $k=0$, since
\[
V_0^{(2l)} (T) = \hat B ^{(2l)} (T) + \hat A ^{(2l)} (T) \hat C = 0,
\] 
by \eqref{N5}. Assume that \eqref{N8} is established  for some $k\in \N$. Then by \eqref{N4} applied twice, we have 
\[
V_{2k+2}  = V_{2k}'' -2\hat A V_{2k}' -\hat A 'V_{2k} +\hat A^2 V_{2k},
\] 
hence 
\be
\label{N9}
V_{2k+2}^{(2l)} (T)= V_{2k}^{(2l+2)} -2(\hat AV_{2k}') ^{(2l)} (T) - (\hat A'V_{2k})^{(2l)} (T) + (\hat A^2 V_{2k}) ^{(2l)} (T).
\ee
The first term in the r.h.s. of \eqref{N9} is null by \eqref{N8}. The second one is also null, for by Leibniz' rule
\[
(\hat AV'_{2k}) ^{(2l)} (T) = \sum_{p=0}^{2l} C_{2l}^{p} \hat A^{(p)} (T) V_{2k}^{(2l-p+1)} (T)  
\]
and $\hat A^{(p)}(T)=0$ if $p$ is even, while $V_{2k}^{(2l-p+1)} (T)=0$ if $p$ is odd. One proves in a similar way
that the third and fourth terms in the r.h.s. of \eqref{N9} are null, noticing that for $p$ odd we have 
\be
\label{N9bis}
(\hat A^2)^{(p)} (T)= 2(\hat A\hat A')^{(p-1)} (T)=0.
\ee
From \eqref{N8}, we infer that 
\[
V_{2k+1}^{(2l+1)} (T)= V_{2k}^{(2l+2)} (T) - (\hat AV_{2k})^{(2l+1)} (T)=0.
\]
Let us proceed to the proof of \eqref{N7}. Again, we first prove by induction on $k\in\N$ that 
\be
\label{N10}
U_{2k+1}^{(2l)}(T)=0 \qquad \forall l\in \N .
\ee
It follows from \eqref{N3}, \eqref{N4} and \eqref{N6} that 
\[
U_1^{(2l)} (T) = U_0^{(2l+1)} (T) -(DU_0)^{(2l)} (T) -V_0^{(2l)} (T) =0\qquad \forall k\in\N . 
\]
Assume that \eqref{N10} is true for some $k\in\N$. Then, by \eqref{N4} applied twice, 
\be
U_{2k+3}^{(2l)}(T) = U_{2k+1}^{(2l+2)} (T)  - (DU_{2k+1}) ^{(2l+1)}  (T) 
-V_{2k+1} ^{(2l+1)} (T)  - (DU_{2k+2}) ^{(2l)} (T) -V_{2k+2}^{(2l)} (T). 
\label{N11}
\ee
Using \eqref{N5}, \eqref{N6} and \eqref{N10}, we see that all the terms in the r.h.s. of \eqref{N11},
except possibly $(DU_{2k+2})^{(2l)}(T)$, are null. Finally,
\[
(DU_{2k+2})^{(2l)} (T) = (DU_{2k+1}')^{(2l)} (T) - (D^2 U_{2k+1})^{(2l)} (T)   - (DV_{2k+1}) ^{(2l)} (T).
\]
Using Leibniz' rule for each term, noticing that in each pair $(p,q)$ with $p+q=2l$, $p$ and $q$ are simultaneously
even or odd, and using \eqref{N5}, \eqref{N6},  \eqref{N9bis} (with $\hat A$ replaced by $D$), and \eqref{N10}, we conclude 
that $(DU_{2k+2})^{(2l)}(T)=0$, so that $U_{2k+3}^{(2l)} (T)=0$. 

Finally, $U_{2k}^{(2l+1)}(T)=0$ is obvious for $k=0$, while for $k\ge 1$
\[
U_{2k}^{(2l+1)}(T)=U_{2k-1}^{(2l+2)} (T) - (DU_{2k-1}) ^{(2l+1)} (T) -V_{2k-1}^{(2l+1)} (T)=0
\]
by \eqref{N5}, \eqref{N6} and \eqref{N10} (with $2k+1$ replaced by $2k-1$). The proof of Proposition
\ref{prop100} is complete.
\subsection{Proof of Proposition \ref{prop200}}
From \eqref{N2}, \eqref{N3} and \eqref{P4}, we obtain successively 
\begin{eqnarray*}
V_1(T) &=& V_0'(T) = \hat B'(T) + \hat A '(T) \hat C = \overline{w_1} '(T) \big( 
{\mathcal J}^{-1} {\bf  B} + {\mathcal J}^{-1} {\bf  A} {\mathcal J}^{-1} C \big) \\
V_3(T) &=& V_2'(T) \\
&=& (V_1'-  \hat A V_1) '(T) \\
&=& (V_0' -\hat A V_0)''(T) - (\hat A V_1 )'(T) \\
&=& V_0'''(T) -2\hat A '(T) V_0'(T) -\hat A '(T) V_1 (T) \\
&=& -3 \hat A '(T) V_0'(T). 
\end{eqnarray*}
Successive applications of \eqref{N3} yield
\ba
V_5(T) &=& V_0^{(5)} (T) - \sum_{i=0}^3 (\hat A V_i)^{(4-i)} (T), \label{P9} \\
V_7(T) &=& V_0^{(7)} (T) - \sum_{i=0}^5 (\hat A V_i) ^{(6-i)} (T).  \label{P10}
\ea
Since $V_0^{(k)} (T)=0$ for $k\ge 2$, it remains to estimate the terms 
$(\hat A V_i)^{(4-i)} (T)$ and $ (\hat A V_i) ^{(6-i)} (T)$. Notice first that by \eqref{P4} and Leibniz' rule 
\[
(\hat A V_i) ^{(k)} (T)= k\hat A '(T) V_i^{(k-1)} (T). 
\]
Thus, from \eqref{P4} and \eqref{P6}, we have that
\ba
&&(\hat A V_0)^{(4)} (T) = 0 \label{P11}, \\
&&(\hat A V_1)^{(3)} (T) = 3 \hat A '(T) V_1''(T) = 3 \hat A' (T) 
\big( V_0^{(3)} (T) - (\hat A V_0)''(T) \big) =-6\hat A '(T) ^2 V_0'(T) , \qquad \label{P12}\\
&&(\hat A V_2) ''(T) = 2\hat A '(T) V_2 ' (T) =2 \hat A '(T) V_3(T) =-6 \hat A'(T)^2 V_0'(T), \label{P1}\\
&&(\hat A V_3) '(T) = \hat A '(T) V_3(T) =-3 \hat A' (T) ^2 V_0'(T).  \label{P14}
\ea
This yields \eqref{P7}. On the other hand,
\ba
(\hat A V_0)^{(6)} (T) &=& 0, \label{P15} \\
(\hat A V_1) ^{(5)} (T) &=& 5 \hat A ' (T) V_1 ^{(4)}(T)  = 5 \hat A'(T)\big( V_0^{(5)} 
-(\hat A V_0) ^{(4)} \big) (T) =0, \label{P16} \\
(\hat A V_2) ^{(4)} (T) &=& 4 \hat A'(T) V_2^{(3)} (T).
\ea
Since 
\[
V_2= V_1' -\hat A V_1 = V_0'' -(\hat A V_0)'-\hat A V_1,
\]
we obtain with \eqref{P4} and \eqref{P12}  that
\[
V_2^{(3)} (T) = V_0^{(5)} (T) - (\hat A V_0) ^{(4)} (T) - (\hat A V_1 ) ^{(3)} (T) = 6 \hat A '( T) ^2 V_0'(T) , 
\]
hence 
\be
\label{P17}
(\hat A V_2 ) ^{(4)} (T) = 24 \hat A' (T) ^3 V_0'(T). 
\ee
On the other hand, 
\ba
(\hat A V_3 ) ^{(3)} (T) &=& 3\hat A'(T) V_3 ''(T) \nonumber \\
&=& 3 \hat A '(T) \big(  V_4'(T) + ( \hat A V_3 )'(T) \big) \nonumber \\
&=& 3 \hat A '(T) \big( V_5(T) + \hat A'(T) V_3(T) \big) \nonumber \\
&=& 36 \hat A'(T) ^3 V_0'(T)  \label{P18} 
\ea
where we used \eqref{N3} and \eqref{P6}-\eqref{P7}. Finally, 
\be
(\hat A V_4) ''(T) = 2\hat A '(T) V_4'(T) = 2\hat A '( T)V_5(T) = 30 \hat A '(T) ^3 V_0'(T)  \label {P19} 
\ee
and 
\be
(\hat A V_5) '(T) = \hat A'(T) V_5 (T) = 15 \hat A'(T) V_0'(T). \label{P20} 
\ee
Gathering together \eqref{P10} and \eqref{P15}-\eqref{P20}, we obtain \eqref{P8}. 
The proof of Proposition \ref{prop200} is complete.
\subsection{Proof of Proposition \ref{prop300}}
From \eqref{N3}-\eqref{N4}, we have that 
\be
\label{P25}
U_0\equiv \hat C, \qquad U_i=U_{i-1}' -D U_{i-1} -V_{i-1}, \quad \forall i\ge 1. 
\ee
Thus
\begin{eqnarray*}
U_2(T) 
&=& (U_1'-DU_1 - V_1 )(T) \\
&=& (0-(DU_0)' -V_0') (T) - V_1 (T) \\
&=& -D'(T) U_0 -2 V_0'(T) 
\end{eqnarray*}
where we used successively \eqref{P25}, \eqref{N7} and \eqref{P5}. 

Successive applications of \eqref{P25} yield
\be
U_4(T) =-\sum_{i=0}^3 [(DU_i) ^{ (3-i) } + V_i ^{ (3-i) } ] (T). \label{P26}
\ee
Using \eqref{P4}, we obtain that
\begin{eqnarray}
\sum_{i=0}^3 (DU_i) ^{(3-i)} (T) &=& \sum_{i=0}^2 (3-i) D'(T) U_i ^{(2-i)} (T)\nonumber  \\
&=& 2D'(T) \big( U_2(T) +V_1(T)  \big) + D'(T) V_0'(T) \nonumber \\
&=& -3D'(T) \big( D'(T) U_0 + 2V_0'(T)  \big) + 2D'(T) V_0'(T) \nonumber\\  
&=& -4 D'(T) V_0'(T) \label{P27}  
\end{eqnarray}
where we used \eqref{P5}, \eqref{P21} and the fact that $D'(T)^2=0$. 

On the other hand,
\ba
\sum_{i=0}^3 V_i ^{(3-i)}(T) &=& (V_0'-\hat A V_0) ''(T) + V_2'(T) + V_3(T)\nonumber \\
&=& -2 \hat A' (T) V_0'(T) + 2V_3 (T) \nonumber\\
&=& - 8 \hat A '(T) V_0'(T)\label{P28} 
\ea 
by \eqref{P6}. Combining \eqref{P26}-\eqref{P28}, we obtain \eqref{P22}. 

Let us now compute $U_6(T)$. Successive applications of \eqref{P25} 
yield
\be
\label{P29}
U_6(T) = -\sum_{i=0}^5 [(DU_i) ^{ (5-i) } + V_i^{ (5-i) } ] (T). 
\ee 
We have that 
\[
\sum_{i=0}^5 (DU_i)^{(5-i)} (T) = \sum_{i=0}^4  (5-i) D'(T) U_i ^{(4-i)} (T). 
\]
Let us estimate the terms $U_i^{(4-i)}(T)$ for $i=0,...,4$. Obviously, $U_0^{(4)} (T) =0$ by \eqref{P25}, while by \eqref{P4} 
\be
U_1^{(3)} (T) = -(DU_0)^{(3)} (T) - V_0^{(3)} (T) =0. 
\label{P30}
\ee
Next we use \eqref{P25} to obtain successively 
\ba
 U_3'(T ) &=& U_4 (T) + V_3(T), \label{P30bis}\\
U_2 ''(T) &=& U_3'(T) + (DU_2)'(T) + V_2'(T) \nonumber \\
&=& U_4(T) + V_3(T) + D'(T) U_2(T) +V_3(T). \nonumber  
\ea
It follows that 
\ba
&&\sum_{i=0}^4 (DU_i) ^{(5-i)} (T) \nonumber \\
&&\qquad = 
3D'(T) \big( U_4(T) + 2 V_3(T) + D'(T) U_2(T)  \big) + 2D'(T) \big( U_4 (T) + V_3(T) \big)  + D'(T) U_4(T) \nonumber  \\
&&\qquad = D'(T) \big( 6 U_4(T) + 8 V_3(T)\big)  \nonumber \\ 
&&\qquad = 24 D'(T) (D'(T) + 2\hat A '(T)) V_0'(T)  - 24 D'(T) \hat A '(T) V_0'(T) \nonumber  \\
&&\qquad = 24 D'(T) \hat A '(T) V_0'(T). \label{P30ter}         
\ea
On the other hand, using \eqref{P16}-\eqref{P18} and \eqref{P6}-\eqref{P7}, we have that
\ba
\sum_{i=0}^4 V_i^{(5-i)} (T) 
&=& V_1 ^{(4)} (T)  + V_2 ^{(3)} (T) + V_3 ^{(2)} (T) + V_4 '(T) \nonumber \\
&=& 6\hat A'(T) ^2 V_0'(T) + 2V_5(T) + \hat A'(T) V_3(T) \nonumber \\
&=& 33 \hat A'(T) ^2 V_0'(T).  \label{P31}  
\ea 
\eqref{P23} follows from \eqref{P29}-\eqref{P31}.

Finally, we compute $U_8(T)$. We see that 
\be
\label{P32}
U_8(T) = -\sum_{i=0}^7 [(DU_i)^{(7-i)} + V_i ^{(7-i)} ](T). 
\ee
Then 
\begin{eqnarray*}
\sum_{i=0}^7 (DU_i)^{ (7-i) } (T) &=& \sum_{i=0}^6 (7-i) D'(T) U_i ^{ (6-i) } (T) \\
&=& 6D'(T) U_1^{ (5) } (T) + 5D'(T) U_2 ^{ (4) } (T) + 4 D'(T) U_3 ^{ (3) } (T) \\
&&\quad + 3D'(T) U_4 ''(T) + 2D '(T) U_5'(T) + D'(T) U_6(T). 
\end{eqnarray*}
Using \eqref{P4}, \eqref{P25} and  \eqref{P30}, we readily see that 
\[
U_1^{(5)}(T) = U_2^{(4)} (T) =0.
\]
Next, successive applications of \eqref{P25} give
\begin{eqnarray*}
U_5'(T)  &=& U_6(T) + V_5(T), \\
U_4''(T) &=& U_5'(T) + (DU_4)' (T) + V_4'(T) \\
&=& U_6(T) + D'(T) U_4(T) + 2V_5(T) .\\
U_3^{(3)} (T) &=& U_4''(T) + (DU_3)''(T) + V_3''(T) \\
&=& \big(  U_6(T) +D'(T) U_4(T) + 2V_5(T)  \big) + 2D'(T) \big(U_4(T) + V_3(T)\big)  \\
&& \quad + V_5(T) + \hat A' (T) V_3(T).  
\end{eqnarray*}
Thus
\begin{eqnarray}
\sum_{i=0}^7 (DU_i)^{(7-i)} (T) 
&=& 4D'(T) \big( U_6(T) + 3D'(T) U_4(T) + 2V_5(T) + 2D'(T) V_3(T) + \hat A'(T) V_3(T) \big) \nonumber \\
&&\quad + 3D'(T) \big( U_6(T) + D'(T) U_4(T) +  2V_5(T)   \big) \nonumber \\
&&\quad +2D'(T) \big(  U_6(T) + V_5(T) \big) +D'(T) U_6(T)\nonumber \\
&=& D'(T) [10\, U_6(T) +16\, V_5(T) + 4 \hat A'(T) V_3(T) ]\nonumber\\
&=& D'(T) [ -240\,  D'(T) \hat A '(T) V_0'(T) -330\,  \hat A' (T) V_0'(T) \nonumber \\
&& \quad + 240\,  \hat A '(T) ^2 V_0'(T) -12 \hat A '(T)^2 V_0'(T) ] \nonumber \\
&=& -102\,  D'(T) \hat A'(T) ^2 V_0'(T). \label{P33} 
\end{eqnarray}
It remains to compute $\sum_{i=0}^7 V_i^{(7-i)} (T)$. 
It is easy to see that 
\[
V_0^{ (7) } (T) = V_1 ^{ (6) }(T) = V_2 ^{ (5) }(T) =0. 
\]
Successive applications of \eqref{N3} give
\begin{eqnarray*}
V_6'(T) &=& V_7(T), \\
V_5''(T) &=& V_6'(T) + (\hat A V_5) '(T) = V_7 (T) + \hat A'(T) V_5(T), \\
V_4^{(3)}  (T) &=& V_5''(T) + (\hat A V_4) ''(T) \\    
&=& V_7(T) + 3 \hat A'(T) V_5(T), \\
V_3^{(4)} (T) &=& V_4 ^{(3)} (T) + (\hat A V_3) ^{(3)} (T) \\
&=& V_7 (T) + 3 \hat A '(T) V_5(T) + 3 \hat A '(T) V_3 ''(T) \\
&=& V_7(T) + 6\hat A'(T)V_5(T) + 3 \hat A '(T) ^2 V_3(T),  
\end{eqnarray*}
where we used \eqref{P18}. Thus
\begin{eqnarray}
\sum_{i=0}^7 V_i^{(7-i)} (T) 
&=& 5 V_7 (T) + 10 \hat A'(T) V_5(T) + 3 \hat A'(T) ^2 V_3 (T) \nonumber \\
&=& -384\,  \hat A'(T) ^3 V_0'(T).   \label{P34}
\end{eqnarray}
Then \eqref{P24} follows from \eqref{P32}-\eqref{P34}. The proof of Proposition \ref{prop300} is achieved.
\section{Acknowledgements}
The authors wish to thank Philippe Martin (Ecole des Mines, Paris) who brought the reference \cite{ST} to their attention. 
The authors were partially supported by the Agence Nationale de la Recherche, Project CISIFS, grant ANR-09-BLAN-0213-02.
The authors wish to thank  the Basque Center for Applied Mathematics-BCAM,  Bilbao (Spain), where part of this work was developed.

\bibliographystyle{abbrv}
\bibliography{Control}
\end{document}